\definecolor{MyBlue}{rgb}{0,0,1}
\definecolor{MyRed}{rgb}{1,0,0}
\definecolor{MyGreen}{rgb}{0,1,0}
\definecolor{MyIndigo}{rgb}{0.7254,0,1}
\definecolor{MyOrange}{rgb}{1,0.4431,0}
\DeclareFontFamily{U}{mathx}{\hyphenchar\font45}
\DeclareFontShape{U}{mathx}{m}{n}{
      <5> <6> <7> <8> <9> <10>
      <10.95> <12> <14.4> <17.28> <20.74> <24.88>
      mathx10
      }{}
\DeclareSymbolFont{mathx}{U}{mathx}{m}{n}
\DeclareMathAccent{\widecheck}{0}{mathx}{"71}
\newtheorem{theorem}{Theorem}[section]
\newtheorem{lemma}[theorem]{Lemma}
\newtheorem{proposition}[theorem]{Proposition}
\newtheorem{corollary}[theorem]{Corollary}
\newtheorem{conjecture}[theorem]{Conjecture}
\newtheorem{question}[theorem]{Question}
\theoremstyle{definition}
\newtheorem{definition}[theorem]{Definition}
\newenvironment{remark}
  {\pushQED{\qed}\remarkx}
  {\popQED\endremarkx}
\newenvironment{example}
  {\pushQED{\qed}\examplex}
  {\popQED\endexamplex}
\DeclareMathOperator{\conv}{conv}
\DeclareMathOperator{\NC}{NC}
\DeclareMathOperator{\VHC}{VHC}
\DeclareMathOperator{\id}{id}
\DeclareMathOperator{\des}{des}
\DeclareMathOperator{\peak}{peak}
\DeclareMathOperator{\black}{black}
\DeclareMathOperator{\skel}{skel}
\DeclareMathOperator{\tl}{tl}
\DeclareMathOperator{\Fer}{Fer}
\DeclareMathOperator{\Comp}{Comp}
\DeclareMathOperator{\QCan}{\mathsf{Qcan}}
\DeclareMathOperator{\Nest}{\mathsf{Nest}}
\newcommand{\dfn}[1]{\textcolor{blue}{\emph{#1}}}
\begin{document}
\title{Fertilitopes}
\author{Colin Defant}
\address{Princeton University \\ Department of Mathematics \\ Princeton, NJ 08544}
\email{cdefant@princeton.edu}

\begin{abstract}
We introduce tools from discrete convexity theory and polyhedral geometry into the theory of West's stack-sorting map $s$. Associated to each permutation $\pi$ is a particular set $\mathcal V(\pi)$ of integer compositions that appears in a formula for the fertility of $\pi$, which is defined to be $|s^{-1}(\pi)|$. These compositions also feature prominently in more general formulas involving families of colored binary plane trees called \emph{troupes} and in a formula that converts from free to classical cumulants in noncommutative probability theory. We show that $\mathcal V(\pi)$ is a transversal discrete polymatroid when it is nonempty. We define the \emph{fertilitope} of $\pi$ to be the convex hull of $\mathcal V(\pi)$, and we prove a surprisingly simple characterization of fertilitopes as nestohedra arising from full binary plane trees. Using known facts about nestohedra, we provide a procedure for describing the structure of the fertilitope of $\pi$ directly from $\pi$ using Bousquet-M\'elou's notion of the canonical tree of $\pi$. As a byproduct, we obtain a new combinatorial cumulant conversion formula in terms of generalizations of canonical trees that we call \emph{quasicanonical trees}. We also apply our results on fertilitopes to study combinatorial properties of the stack-sorting map. In particular, we show that the set of fertility numbers has density $1$, and we determine all infertility numbers of size at most $126$. Finally, we reformulate the conjecture that $\sum_{\sigma\in s^{-1}(\pi)}x^{\des(\sigma)+1}$ is always real-rooted in terms of nestohedra, and we propose natural ways in which this new version of the conjecture could be extended. 
\end{abstract}

\maketitle

\section{Introduction}\label{Sec:Intro} 

With the introduction of a certain \emph{stack-sorting machine} in his book \emph{The Art of Computer Programming} \cite{Knuth}, Knuth initiated the field of permutation patterns \cite{Bona, Kitaev, Linton} and also provided the first use of an invaluable tool in enumerative and analytic combinatorics called the \emph{kernel method}. West \cite{West} introduced a deterministic variant of Knuth's machine called the \dfn{stack-sorting map}. This map is a function, which we denote by $s$, that sends permutations of size $n$ to permutations of size $n$. In this article, a \dfn{permutation of size $n$} is an ordering of some set of $n$ positive integers (not necessarily the integers $1,\ldots,n$). For example, we consider $3179$ to be a permutation of size $4$. There is a simple recursive description of the stack-sorting map, which we now provide. First, $s$ sends the empty permutation to itself. Given a nonempty permutation $\pi$, we can write $\pi=LmR$, where $m$ is the largest entry in $\pi$. We then define $s(\pi)=s(L)s(R)m$. For example, \[s(4273615)=s(42)\,s(3615)\,7=s(2)\,4\,s(3)\,s(15)\,67=243\,s(1)\,567=2431567.\]

West's map has now become the most vigorously-studied form of stack-sorting; an exploration of this map and its close relatives winds through analytic combinatorics (see \cite{Bona, DefantTroupes, DefantCounting, DefantElvey} and the many references therein); combinatorial dynamics \cite{DefantPropp, DefantMonotonicity}; considerations of symmetric, unimodal, log-concave, and real-rooted polynomials \cite{BonaBoca, BonaSymmetry, Branden3, DefantCounting, DefantClass, DefantTroupes, DefantEngenMiller, Singhal}; special partially ordered sets \cite{DefantCatalan, DefantPolyurethane}; 
and even noncommutative probability theory \cite{DefantTroupes, DefantEngenMiller}. The goal of this paper is to introduce concepts from discrete convexity theory and polyhedral geometry into the theory of the stack-sorting map. 

\medskip
In order to provide a broad framework for generalizing many of the results known about stack-sorting, the author has introduced \emph{troupes}, which are essentially sets of colored binary plane trees that are closed under two operations called \emph{insertion} and \emph{decomposition}. \emph{Valid hook configurations} are combinatorial objects that are crucial for understanding the stack-sorting map and, more generally, postorder traversals of decreasing colored binary plane trees belonging to a troupe. Roughly speaking, a valid hook configuration of a permutation $\pi$ is a diagram obtained by decorating the plot of $\pi$ with rotated-L-shaped \emph{hooks} that satisfy special conditions (see Definition~\ref{Def5}). Quite surprisingly, valid hook configurations also appear naturally in a formula for converting from free to classical cumulants, allowing one to use tools from free probability theory to understand stack-sorting and vice-versa \cite{DefantTroupes}. Valid hook configurations have also been studied as combinatorial objects in their own right in \cite{DefantMotzkin, Maya, Ilani}.

If $\pi$ is a permutation of size $n$ with $k$ descents, then every valid hook configuration of $\pi$ induces a composition of the integer $n-k$ into $k+1$ parts; the compositions arising in this way are called the \dfn{valid compositions} of $\pi$, and the set of such compositions is denoted $\mathcal V(\pi)$. The \dfn{fertility} of $\pi$ is $|s^{-1}(\pi)|$, the number of preimages of $\pi$ under the stack-sorting map, and the Fertility Formula \cite{DefantPostorder, DefantTroupes} states that \[|s^{-1}(\pi)|=\sum_{(q_0,\ldots,q_k)\in\mathcal V(\pi)}\prod_{i=0}^kC_{q_i},\] where $C_r=\frac{1}{r+1}\binom{2r}{r}$ is the $r^\text{th}$ Catalan number. The Fertility Formula also has a much more general form, called the Refined Tree Fertility Formula, which applies in the setting of troupes and takes into account certain tree statistics (see Section~\ref{Subsec:FertilityFormulas} for more details). Let us also remark that a permutation $\pi$ satisfies $\mathcal V(\pi)=\{(1,1,\ldots,1)\}$ if and only if it is \emph{uniquely sorted}; such permutations are counted by a Lassalle's sequence and possess a great deal of unexpected enumerative structure \cite{DefantCatalan, DefantEngenMiller, Hanna, Singhal}. Research concerning valid compositions, uniquely sorted permutations, and fertility numbers also inspired the very recent article \cite{Cioni}, which deals with a different function called $\texttt{Queuesort}$.

In this paper, we view the valid compositions of $\pi$ as lattice points in $\mathbb R^{k+1}$, and we define the \dfn{fertilitope} of $\pi$, denoted $\Fer_\pi$, to be the convex hull of $\mathcal V(\pi)$. One of our main results states that $\mathcal V(\pi)$ is precisely the set of lattice points in $\Fer_{\pi}$ (see Theorem~\ref{Thm:Main1}). This provides a new perspective on the Fertility Formula and its generalizations, allowing us to express the fertility of $\pi$ as a weighted sum over the lattice points in a (convex) polytope. We will also see that we can convert from free to classical cumulants via a sum over the lattice points in a multiset of polytopes. 

To state the next main result of this article, we introduce some terminology from polyhedral combinatorics; we provide further details and examples in Section~\ref{Subsec:BinaryNestohedra}. Let $e_1,\ldots,e_{k+1}$ denote the standard basis vectors in $\mathbb R^{k+1}$. For each set $I\subseteq[k+1]\coloneqq\{1,\ldots,k+1\}$, let $\Delta_I$ denote the simplex obtained by taking the convex hull of the vectors $e_i$ for $i\in I$. Following \cite{Postnikov2}, we say a collection $\mathcal B$ of nonempty subsets of $[k+1]$ is a \dfn{building set} if it contains all singleton subsets of $[k+1]$ and has the property that if $I,J\in\mathcal B$ are not disjoint, then $I\cup J\in\mathcal B$. Given a building set $\mathcal B$ and a tuple ${\bf y}=(y_I)_{I\in\mathcal B}$ of positive real numbers indexed by the sets in $\mathcal B$, we can form the Minkowski sum \[\Nest({\bf y})=\sum_{I\in\mathcal B}y_I\Delta_I.\] A polytope $\Nest({\bf y})$ obtained in this manner is called a \dfn{nestohedron}. For more information about nestohedra, see \cite{Dao, Feichtner, Grujic, Pilaud, Postnikov2, Postnikov, Zelevinsky}.

We define a \dfn{binary building set} on $[k+1]$ to be a building set $\mathcal B$ such that for all $I,J\in \mathcal B$ with $I\cap J\neq\emptyset$, the sets $I$ and $J$ are intervals satisfying either $I\subseteq J$ or $J\subseteq I$ (the name comes from an equivalent definition in terms of binary plane trees that we give in Section~\ref{Subsec:BinaryNestohedra}). Given a binary building set $\mathcal B$, we can choose a tuple ${\bf y}=(y_I)_{I\in\mathcal B}\in\mathbb R_{>0}^{\mathcal B}$ and form the nestohedron $\Nest({\bf y})=\sum_{I\in\mathcal B}y_I\Delta_I$ as above. We define a \dfn{binary nestohedron} to be a nestohedron obtained in this fashion. A polytope in $\mathbb R^{k+1}$ is \dfn{integral} if its vertices lie in $\mathbb Z^{k+1}$. Our second main result (Theorem~\ref{Thm:Main2}) states that a nonempty polytope is a fertilitope if and only if it is an integral binary nestohedron. In particular, this implies that the set $\mathcal V(\pi)$ of valid compositions of a permutation $\pi$ is a transversal discrete polymatroid if it is nonempty (see Section~\ref{Subsec:BinaryNestohedra}). 

\medskip
In general, the set of valid compositions of a permutation $\pi$ can be quite complicated. Indeed, if valid compositions of arbitrary permutations were easy to describe, then the Fertility Formula would trivialize most inquiries regarding the stack-sorting map. Therefore, one who has thought a lot about the stack-sorting map and valid hook configurations would expect the set of all fertilitopes to be extremely unwieldy. This makes our simple characterization of fertilitopes as integral binary nestohedra all the more surprising.  

Our characterization of fertilitopes also allows us to glean information about their structure from known properties of nestohedra. For instance, we will see that the number of $i$-dimensional faces of the fertilitope of a permutation with $k$ descents is at most $\binom{k}{i}2^{k-i}$. When $\pi$ is a permutation with $\Fer_\pi\neq\emptyset$, it is natural to ask for an explicit description of a binary building set $\mathcal B$ and a tuple ${\bf y}=(y_I)_{I\in\mathcal B}$ such that $\Fer_\pi=\Nest({\bf y})$. We will provide such a description that makes use of the \emph{canonical tree} of $\pi$, which is a special decreasing binary plane tree with postorder traversal $\pi$ that was originally introduced by Bousquet-M\'elou \cite{Bousquet}.  Among other things, this allows us to read off the dimension and the face numbers of $\Fer_\pi$ from $\pi$. Along the way, we introduce a generalization of canonical trees that we call \emph{quasicanonical trees}. As an added bonus, we will obtain a new combinatorial formula for converting from free to classical cumulants in noncommutative probability theory that involves quasicanonical trees (see Corollary~\ref{Cor:Fertilitopes2}). This result adds to the recent surge of interest in combinatorial cumulant conversion formulas \cite{Arizmendi, Belinschi, Celestino, DefantTroupes, Ebrahimi, Lehner, Josuat}. 

\medskip
Our main results concerning fertilitopes (Theorems~\ref{Thm:Main1} and \ref{Thm:Main2}) will allow us to deduce new theorems about the stack-sorting map. One somewhat unexpected aspect of this map is that not every nonnegative integer arises as the fertility of a permutation. For example, there is no permutation with fertility $3$. In \cite{DefantFertility}, the author defined a \dfn{fertility number} to be a nonnegative integer that is the fertility of some permutation. A nonnegative integer that is not a fertility number (such as $3$) is called an \dfn{infertility number}. The main results from \cite{DefantFertility} are as follows: 
\begin{itemize}
\item The set of fertility numbers is closed under multiplication. 
\item If there exists a permutation of size $n$ with fertility $f$, then there exists a permutation of size $n+2$ with fertility $f$.
\item Every nonnegative integer that is not congruent to $3$ modulo $4$ is a fertility number. 
\item The smallest fertility number that is congruent to $3$ modulo $4$ is $27$. 
\item The lower asymptotic density of the set of fertility numbers is at least $0.7618$. 
\item If $f$ is a fertility number, then there is a permutation of size at most $f+1$ with fertility $f$.  
\end{itemize}

The set of fertility numbers forms a mysterious multiplicative monoid about which there are still several unanswered questions. In \cite{DefantFertility}, the author asked if the set of fertility numbers has a well-defined density. In this article, we answer this question in the affirmative by proving that it has density $1$; this greatly improves upon the lower bound of $0.7618$ from above. We also determine all infertility numbers that are at most $126$. When proving that certain numbers are infertility numbers, we will rely heavily on the fact that the set of valid compositions of a permutation is a discrete polymatroid (if it is nonempty). 

For each permutation $\pi$, we can consider the descent polynomial $\sum_{\sigma\in s^{-1}(\pi)}x^{\des(\sigma)+1}$, where $\des$ denotes the descent statistic. B\'ona \cite{BonaSymmetry} proved that these polynomials are symmetric and unimodal, and Br\"and\'en \cite{BrandenActions} strengthened B\'ona's theorem by proving that these polynomials are, in fact, $\gamma$-nonnegative. In \cite{DefantCounting}, the current author found a new proof of this $\gamma$-nonnegativity using valid hook configurations, and he stated the following conjecture. 

\begin{conjecture}[\!\!\cite{DefantCounting}]\label{Conj2} 
For each permutation $\pi$, the polynomial \[\sum_{\sigma\in s^{-1}(\pi)}x^{\des(\sigma)+1}\] has only real roots and is, therefore, log-concave. 
\end{conjecture}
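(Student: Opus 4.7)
The plan is to reformulate Conjecture~\ref{Conj2} as a purely polyhedral statement about sums over lattice points in fertilitopes, and then attempt an inductive attack using the nestohedron structure. The first step would be to establish a descent-tracking refinement of the Fertility Formula. Using the correspondence between preimages of $\pi$ under $s$ and decreasing binary plane trees with postorder in $s^{-1}(\pi)$, together with the Refined Tree Fertility Formula, one expects an identity of the form
$$\sum_{\sigma \in s^{-1}(\pi)} x^{\des(\sigma)+1} = \sum_{(q_0,\ldots,q_k) \in \mathcal{V}(\pi)} W_{(q_0,\ldots,q_k)}(x),$$
where the weight $W_{(q_0,\ldots,q_k)}(x)$ is built from Narayana polynomials $N_{q_i}(x)$ at each component, possibly modified by a power of $x$ or a factor of $(1+x)$ to account for descents at the junctions between adjacent pieces. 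Pinning down the exact form of $W$ via careful bookkeeping of tree statistics is the combinatorial prerequisite.

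With this refinement in hand, Theorem~\ref{Thm:Main1} identifies $\mathcal{V}(\pi)$ with the lattice points in the fertilitope $\Fer_\pi$, and Theorem~\ref{Thm:Main2} identifies $\Fer_\pi$ with an integral binary nestohedron. Conjecture~\ref{Conj2} thereby becomes the following polyhedral assertion: for every integral binary nestohedron $P \subseteq \mathbb{R}^{k+1}$, the polynomial
$$f_P(x) = \sum_{{\bf q} \in P \cap \mathbb{Z}^{k+1}} W_{\bf q}(x)$$
is real-rooted. This divorces the problem from sorting dynamics and lands it squarely in the realm of generalized permutahedra, where powerful tools are available --- interlacing families, the Borcea--Br\"and\'en theory of multivariate stability preservers, and the $\gamma$-nonnegativity machinery for chordal nestohedra.

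To attack the reformulated conjecture, I would induct on the size of the binary plane tree defining the building set. The base case (a single simplex $\Delta_I$) reduces to the classical real-rootedness of Narayana polynomials. For the inductive step, the root-subtree decomposition of the defining tree gives a recursive splitting of the Minkowski sum $\Fer_\pi = \sum_{I \in \mathcal{B}} y_I \Delta_I$, and one would try to express $f_P$ as the image of a product of smaller polynomials under a stability-preserving operator. The main obstacle is the familiar pathology that sums of real-rooted polynomials are rarely real-rooted, so the induction can succeed only if one constructs a compatible interlacing family indexed by the faces of $\Fer_\pi$ (equivalently, by Postnikov's nested subsets for $\mathcal{B}$). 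This is precisely the hard step where the special combinatorics of binary building sets --- rather than general nestohedra --- would need to be fully exploited, and it is also the step that determines whether the natural polyhedral extensions of Conjecture~\ref{Conj2} stand or fall.
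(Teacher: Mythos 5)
This statement is a conjecture, not a theorem: the paper does not prove it (and explicitly notes that even the log-concavity remains open), so there is no proof of record to compare against, and your proposal does not close the gap either. Your first two steps reproduce material that is already established. The descent-tracking refinement you anticipate is exactly the Refined Fertility Formula (Corollary~\ref{RFF'}): the weight is precisely $W_{\bf q}(x)=N_{\bf q}(x)=\prod_i N_{q_i}(x)$ with no correction by powers of $x$ or factors of $(1+x)$ at the junctions --- the valid-composition bookkeeping already absorbs all descent information. Combining this with Theorems~\ref{Thm:Main1} and~\ref{Thm:Main2} yields exactly the paper's Conjecture~\ref{Conj:Fertilitopes1} and the equivalence recorded in Corollary~\ref{Cor:Fertilitopes1}. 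So the reformulation you describe is the paper's own contribution, not a new route.

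The genuine gap is your third step, and you have essentially named it yourself: the reduction to integral binary nestohedra does not by itself produce a proof, because $\sum_{{\bf q}\in P\cap\mathbb Z^{k+1}}N_{\bf q}(x)$ is a sum of real-rooted polynomials over a lattice-point set, and no compatible interlacing family indexed by nested sets (or any stability-preserving operator realizing the Minkowski-sum recursion) is actually constructed. The base case $P=\Delta_I$ already illustrates the difficulty: even there the polynomial is a sum of $|I|$ products of Narayana polynomials, not a single Narayana polynomial, so real-rootedness is not simply classical. Until that interlacing structure is exhibited, the inductive step is a hope rather than an argument, and the conjecture remains open.
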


Note that the log-concavity of the polynomials in Conjecture~\ref{Conj2} is also not known. 

\medskip

The following new conjecture requires us to consider the \dfn{Narayana polynomials} \[N_n(x)={\sum_{i=1}^n}\frac{1}{n}\binom{n}{i}\binom{n}{i-1}x^i.\] Given a composition ${\bf q}=(q_0,\ldots,q_k)\in\mathbb Z_{>0}^{k+1}$, we let $N_{\bf q}(x)=\prod_{i=0}^kN_{q_i}(x)$. 

\begin{conjecture}\label{Conj:Fertilitopes1}
For each integral binary nestohedron $P\subseteq\mathbb R^{k+1}$, the polynomial \[\sum_{{\bf q}\in P\cap\,\mathbb Z^{k+1}}N_{{\bf q}}(x)\] has only real roots and is, therefore, log-concave. 
\end{conjecture}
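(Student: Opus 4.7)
The plan is to reduce Conjecture~\ref{Conj:Fertilitopes1} to Conjecture~\ref{Conj2} using the characterization of integral binary nestohedra as fertilitopes. Given an integral binary nestohedron $P \subseteq \mathbb R^{k+1}$, Theorem~\ref{Thm:Main2} produces a permutation $\pi$ with $k$ descents such that $P = \Fer_\pi$, and Theorem~\ref{Thm:Main1} identifies $P \cap \mathbb Z^{k+1}$ with the set $\mathcal V(\pi)$ of valid compositions of $\pi$. Combining this with the refined version of the Fertility Formula that tracks descents on preimages yields the identity
\[
\sum_{\sigma \in s^{-1}(\pi)} x^{\des(\sigma)+1} = \sum_{{\bf q}\in\mathcal V(\pi)} N_{\bf q}(x),
\]
which specializes at $x=1$ to the usual Fertility Formula since $N_n(1)=C_n$. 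Hence Conjectures~\ref{Conj2} and~\ref{Conj:Fertilitopes1} are logically equivalent, and a proof of either would establish the other.

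Since Conjecture~\ref{Conj2} is itself open, the reduction above is a reformulation rather than a proof. To attack Conjecture~\ref{Conj:Fertilitopes1} directly, I would induct on the underlying binary building set $\mathcal B$ via the associated full binary plane tree $T$. The base case is $\mathcal B=\{\{1\}\}$, where $P\cap\mathbb Z$ is a single integer $q_0$ and the polynomial is $N_{q_0}(x)$, whose real-rootedness is classical. For the inductive step, the root of $T$ splits it into subtrees $T_L,T_R$ associated with binary building sets $\mathcal B_L,\mathcal B_R$ on the two halves of $[k+1]$; one would slice $P$ by hyperplanes of the form $q_0+\cdots+q_j = t$ separating these halves and try to write the polynomial as $\sum_t f_t^L(x)\,f_t^R(x)$, where $f_t^L$ and $f_t^R$ are polynomials of the same form attached to integral binary nestohedra for $\mathcal B_L$ and $\mathcal B_R$.

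The principal obstacle is that sums of real-rooted polynomials are not in general real-rooted, so an induction of this shape cannot succeed without an additional interlacing or compatibility property propagating through the tree. A promising route is Br\"and\'en's framework of compatible polynomials: one would try to show that each family $\{f_t^L\}_t$ and $\{f_t^R\}_t$ admits a common interlacer, which would force $\sum_t f_t^L f_t^R$ to be real-rooted. Alternatively, one could attempt to lift the polynomial to a multivariate polynomial with one variable per coordinate $q_i$ that is stable in the sense of Borcea--Br\"and\'en, or Lorentzian in the sense of Br\"and\'en--Huh, and then recover real-rootedness (or at least log-concavity) by specialization. The geometric interpretation of $N_n(x)$ as the $h$-polynomial of the associahedron, together with the connection between valid hook configurations and free probability emphasized in the introduction, suggests that the polynomial in Conjecture~\ref{Conj:Fertilitopes1} may arise as a natural generating function in free-probabilistic or subdivision-theoretic terms, potentially supplying the hidden positivity needed for real-rootedness.
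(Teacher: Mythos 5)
This statement is a conjecture, and the paper does not prove it; the only thing the paper establishes about it is Corollary~\ref{Cor:Fertilitopes1}, namely that it is equivalent to Conjecture~\ref{Conj2} via Theorems~\ref{Thm:Main1} and~\ref{Thm:Main2} together with the Refined Fertility Formula (Corollary~\ref{RFF'}). Your first paragraph reproduces exactly that equivalence, and you correctly acknowledge that this is a reformulation rather than a proof; the remaining two paragraphs are a speculative research program (interlacing via compatible polynomials, stability or Lorentzian lifts) that is reasonable but contains no completed argument, so the conjecture remains open in your write-up just as it does in the paper.
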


We will see that Conjectures~\ref{Conj2} and \ref{Conj:Fertilitopes1} are equivalent, despite the fact that they appear completely unrelated on the surface. In Section~\ref{Sec:Descent}, we will point to some natural directions in which one could extend Conjecture~\ref{Conj:Fertilitopes1}. Our hope is that such extensions could provide the correct framework within which to try proving Conjecture~\ref{Conj:Fertilitopes1} and, therefore, Conjecture~\ref{Conj2}. Because the set of lattice points in an integral binary nestohedron is a discrete polymatroid, one might hope that the structural properties of discrete polymatroids could help resolve Conjecture~\ref{Conj:Fertilitopes1}. Indeed, recent breakthroughs such as the theory of Lorentzian polynomials introduced by Br\"and\'en and Huh \cite{BrandenHuh} have found strong connections between discrete polymatroids (which are essentially the same as M-convex sets) and log-concave polynomials.

\subsection{Outline}

Section~\ref{Sec:Preliminaries} provides background information on valid hook configurations, valid compositions, troupes, tree traversals, cumulants, nestohedra, and discrete polymatroids. Section~\ref{Sec:Main} is devoted to proving our main theorems about valid compositions, fertilitopes, and binary nestohedra. In Section~\ref{Sec:Quasicanonical}, we introduce quasicanonical trees and use them to describe the structure of the fertilitope $\Fer_\pi$ directly from the plot of $\pi$; we also obtain a new cumulant conversion formula involving quasicanonical trees. Section~\ref{Sec:FertilityNumbers} proves that the set of fertility numbers has density $1$ and determines the complete list of infertility numbers that are at most $126$. Finally, Section~\ref{Sec:Descent} lists potential lines of inquiry about extensions of Conjecture~\ref{Conj:Fertilitopes1}; in the same section, we define \emph{extremal hook configurations}, which correspond to vertices of fertilitopes.  

\subsection{Terminology and Notation}

In this article, a \dfn{permutation} is an ordering of a finite set of positive integers, which we write in one-line notation. Let $S_n$ denote the set of permutations of the set $[n]\coloneqq\{1,\ldots,n\}$. Let $\id_n$ denote the identity permutation $123\cdots n$ in $S_n$. If $\pi$ is a permutation of size $n$, then the \dfn{standardization} of $\pi$ is the permutation in $S_n$ obtained by replacing the $i^\text{th}$-smallest entry in $\pi$ with $i$ for all $i$. For instance, the standardization of $3816$ is $2413\in S_4$. We say two permutations have the \dfn{same relative order} if their standardizations are equal. A \dfn{descent} of a permutation $\pi=\pi_1\cdots\pi_n$ is an index $i\in[n-1]$ such that $\pi_i>\pi_{i+1}$. A \dfn{peak} of $\pi$ is an index $i\in\{2,\ldots,n-1\}$ such that $\pi_{i-1}<\pi_i>\pi_{i+1}$. Let $\des(\pi)$ and $\peak(\pi)$ denote the number of descents of $\pi$ and the number of peaks of $\pi$, respectively. For $\pi\in S_n$ and $m\geq 0$, we let $\pi\oplus \id_m=\pi(n+1)\cdots(n+m)$ be the concatenation of $\pi$ with the increasing permutation $(n+1)\cdots(n+m)$ of size $m$. In particular, $\pi\oplus 1=\pi(n+1)$.

A \dfn{composition of $a$ into $b$ parts} is a tuple of $b$ positive integers, called the \dfn{parts} of the composition, that sum to $a$. If $(u_n)_{n\geq 1}$ is a sequence of elements of a field and ${\bf q}=(q_0,\ldots,q_k)$ is a composition, we let $u_{\bf q}=\prod_{i=0}^ku_{q_i}$. 

Every polytope in this article is assumed to be convex and to lie in a vector space $\mathbb R^n$ for some $n$. Let $\conv(A)$ denote the convex hull of a set $A\subseteq\mathbb R^n$. As mentioned above, we let $e_1,\ldots,e_n$ denote the standard basis vectors in $\mathbb R^n$ and let $\Delta_I=\conv(\{e_i:i\in I\})$. We will occasionally deal with Euclidean spaces of different dimensions; in these cases, it should be clear from context what the number of coordinates in the vector $e_i$ is supposed to be. The \dfn{Minkowski sum} of sets $A_1,\ldots,A_\ell\subseteq \mathbb R^n$ is the set $\sum_{i=1}^\ell A_i=\{a_1+\cdots+a_\ell:a_i\in A_i\text{ for all }i\in[\ell]\}$. The Minkowski sum of two sets $A_1$ and $A_2$ is written $A_1+A_2$. The \dfn{Cartesian product} of sets $A\subseteq\mathbb R^m$ and $B\subseteq\mathbb R^n$ is the set $A\times B=\{(x_1,\ldots,x_{m+n})\in\mathbb R^{m+n}:(x_1,\ldots,x_m)\in A, (x_{m+1},\ldots,x_{m+n})\in B\}$.

\section{Preliminaries}\label{Sec:Preliminaries}

\subsection{Valid Hook Configurations and Valid Compositions}\label{Subsec:VHCs}

The \dfn{plot} of a permutation $\pi=\pi_1\cdots\pi_n$ is the diagram showing the points $(i,\pi_i)$ for all $i\in[n]$. A \dfn{hook} of $\pi$ is a rotated L shape connecting two points $(i,\pi_i)$ and $(j,\pi_j)$ with $i<j$ and $\pi_i<\pi_j$, as in Figure~\ref{Fig4}. The point $(i,\pi_i)$ is the \dfn{southwest endpoint} of the hook, and $(j,\pi_j)$ is the \dfn{northeast endpoint} of the hook. For example, Figure~\ref{Fig4} shows the plot of the permutation $\pi=426315789$. The hook shown in this figure has southwest endpoint $(3,6)$ and northeast endpoint is $(8,8)$. If $i$ is a descent of $\pi$, then we call the point $(i,\pi_i)$ a \dfn{descent top} of the plot of $\pi$. 

\begin{figure}[ht]
  \begin{center}{\includegraphics[height=3.5cm]{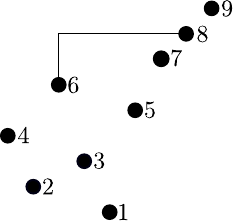}}
  \end{center}
  \caption{The plot of $426315789$ along with a single hook. We have labeled the points in the plot with their heights.}\label{Fig4}
\end{figure}

\begin{definition}\label{Def5}
Let $\pi$ be a permutation with descents $d_1<\cdots <d_k$. A \dfn{valid hook configuration} of $\pi$ is a tuple $\mathcal H=(H_1,\ldots,H_k)$ of hooks of $\pi$ that satisfy the following properties: 
\begin{enumerate}
\item \label{Item1} For each $i\in[k]$, the southwest endpoint of $H_i$ is $(d_i,\pi_{d_i})$. 
\item \label{Item2} No point in the plot of $\pi$ lies directly above a hook in $\mathcal H$. 
\item \label{Item3} No two hooks in $\mathcal H$ intersect or overlap each other unless the northeast endpoint of one is the southwest endpoint of the other. 
\end{enumerate}
Let $\VHC(\pi)$ denote the set of valid hook configurations of $\pi$. We make the convention that a valid hook configuration includes its underlying permutation as part of its identity so that $\VHC(\pi)\cap\VHC(\pi')=\emptyset$ when $\pi\neq \pi'$. Given a set $S$ of permutations, let $\VHC(S)=\bigcup_{\pi\in S}\VHC(\pi)$. We make the convention that if $\pi$ is monotonically increasing, then $\VHC(\pi)$ contains a single element: the empty valid hook configuration of $\pi$, which has no hooks. 
\end{definition}

Fix $\pi=\pi_1\cdots\pi_n$ with $\des(\pi)=k$. Each valid hook configuration $\mathcal H=(H_1,\ldots,H_k)\in\VHC(\pi)$ induces a coloring of the plot of $\pi$. To begin this coloring, draw a sky over the entire diagram and assign a color to the sky. Assign arbitrary distinct colors other than the color given to the sky to the hooks $H_1,\ldots,H_k$. There are $k$ northeast endpoints of hooks, and these points remain uncolored. However, all of the other $n-k$ points will be colored. In order to decide how to color a point $(i,\pi_i)$ that is not a northeast endpoint, imagine that this point looks directly upward. If it sees a hook when looking upward, it receives the same color as the hook that it sees. If it does not see a hook, it must see the sky, so it receives the same color as the sky. However, if $(i,\pi_i)$ is the southwest endpoint of a hook, then it must look around (on the left side of) the vertical part of that hook. Figure~\ref{Fig28} shows the coloring of the plot of a permutation induced by a valid hook configuration. 

\begin{figure}[ht]
\begin{center}
\includegraphics[height=7.525cm]{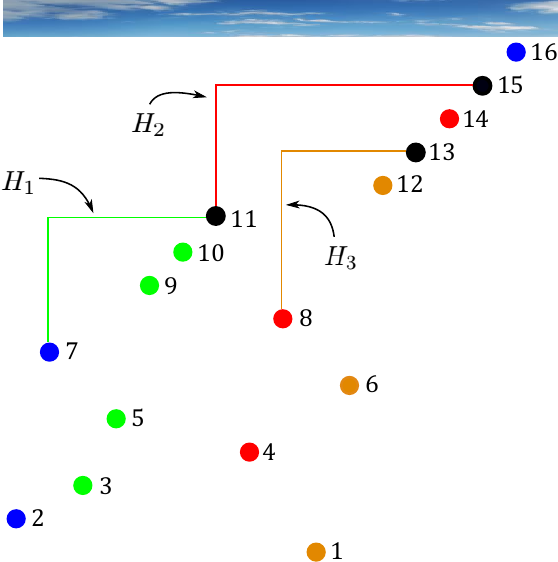}
\caption{A valid hook configuration and its induced coloring.} 
\label{Fig28}
\end{center}  
\end{figure} 

Let $q_i$ denote the number of points given the same color as the hook $H_i$. Let $q_0$ be the number of points given the same color as the sky. Let ${\bf q}^{\mathcal H}=(q_0,\ldots,q_k)$. For example, if $\mathcal H$ is the valid hook configuration shown in Figure~\ref{Fig28}, then ${\bf q}^{\mathcal H}=(3,4,3,3)$. Observe that the point $(d_i+1,\pi_{d_i+1})$ is necessarily the same color as $H_i$, while $(1,\pi_1)$ is necessarily the color of the sky. This means that ${\bf q}^{\mathcal H}$ is a composition of $n-k$ into $k+1$ parts. We say the valid hook configuration $\mathcal H$ \dfn{induces} this composition. A \dfn{valid composition} of $\pi$ is a composition induced by a valid hook 
configuration of $\pi$. Let \[\mathcal V(\pi)=\{q^{\mathcal H}:\mathcal H\in\VHC(\pi)\}\] denote the set of valid compositions of $\pi$. One simple property of valid compositions that we will use several times is that $\mathcal V(\pi)=\mathcal V(\pi')$ whenever $\pi$ and $\pi'$ have the same relative order. 

\begin{remark}\label{Rem6}
Fix a permutation $\pi$ with descents $d_1<\cdots<d_k$. The map $\VHC(\pi)\to\mathcal V(\pi)$ given by $\mathcal H\mapsto{\bf q}^{\mathcal H}$ is bijective. To see this, suppose $\mathcal H=(H_1,\ldots,H_k)\in\VHC(\pi)$ is such that ${\bf q}^{\mathcal H}=(q_0,\ldots,q_k)$. The hook $H_k$ is completely determined by the number $q_k$. Indeed, the southwest endpoint of $H_k$ must be $(d_k,\pi_{d_k})$, and the northeast endpoint of $H_k$ must be chosen so that exactly $q_k$ points lie underneath $H_k$. Once $H_k$ is drawn, the hook $H_{k-1}$ is similarly determined by the number $q_{k-1}$. Continuing in this fashion, we find that all of the hooks $H_k,\ldots,H_1$ are uniquely determined.  
\end{remark} 

The next subsections detail why valid compositions are so important in the theory of stack-sorting and, more generally, in the theory of troupes. We will also briefly discuss how they relate to cumulants in noncommutative probability theory.

\subsection{Troupes}

A \dfn{binary plane tree} is a rooted tree in which each vertex has at most $2$ children and each child is designated as either a left or a right child. Let us fix a finite set ${\bf C}$ of colors; assume that ${\bf C}$ contains the colors black and white. A \dfn{colored binary plane tree} is a tree obtained from a binary plane tree by assigning each vertex a color from ${\bf C}$. Let $\mathsf{BPT}$ and $\mathsf{CBPT}$ denote the set of binary plane trees and the set of colored binary plane trees, respectively. We make the convention that a binary plane tree is a colored binary plane tree in which all vertices are black; thus, $\mathsf{BPT}\subseteq\mathsf{CBPT}$. For ${\bf T}\subseteq\mathsf{CBPT}$, we let ${\bf T}_n$ denote the set of trees in ${\bf T}$ that have $n$ vertices.

Let $T_1$ and $T_2$ be nonempty colored binary plane trees, and let $v$ be a vertex of $T_1$. Let us replace $v$ with two vertices that are connected by a left edge. This produces a new tree $T_1^*$ with one more vertex than $T_1$. We call the lower endpoint of the new left edge $v$, identifying it with the original vertex $v$ and giving it the same color as the original $v$. We denote the upper endpoint of the new left edge by $v^*$, and we color $v^*$ black. For instance, if \[T_1=\begin{array}{l}\includegraphics[height=1.306cm]{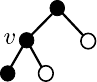}\end{array},\] where $v$ is as indicated, then \[T_1^*=\begin{array}{l}\includegraphics[height=1.835cm]{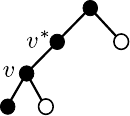}\end{array}.\] The \dfn{insertion of $T_2$ into $T_1$ at $v$}, denoted $\nabla_v(T_1,T_2)$, is the tree formed by attaching $T_2$ as the right subtree of $v^*$ in $T_1^*$. For example, if $T_1$ and $T_1^*$ are as above and \[T_2=\begin{array}{l}\includegraphics[height=1.309cm]{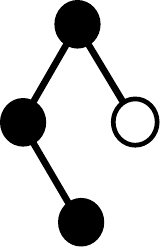}\end{array},\] then \[\nabla_v(T_1,T_2)=\begin{array}{l}\includegraphics[height=2.358cm]{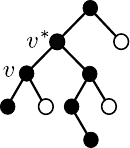}\end{array}.\] 

One can reverse the above procedure. Let $T$ be a colored binary plane tree, and suppose $v^*$ is a black vertex in $T$ with $2$ children. Let $v$ be the left child of $v^*$ in $T$, and let $T_2$ be the right subtree of $v^*$ in $T$. Let $T_1^*$ be the tree obtained by deleting $T_2$ from $T$, and let $T_1$ be the tree obtained from $T_1^*$ by contracting the edge connecting $v$ and $v^*$ into a single vertex. We call this contracted vertex $v$, identifying it with (and giving it the same color as) the original $v$. We say the pair $(T_1,T_2)$ is the \dfn{decomposition of $T$ at $v^*$} and write $\Delta_{v^*}(T)=(T_1,T_2)$.  

We say a collection ${\bf T}$ of colored binary plane trees is 
\begin{itemize}
\item \dfn{insertion-closed} if for all nonempty trees $T_1,T_2\in{\bf T}$ and every vertex $v$ of $T_1$, the tree $\nabla_v(T_1,T_2)$ is in ${\bf T}$; 
\item \dfn{decomposition-closed} if for every $T\in{\bf T}$ and every black vertex $v^*$ of $T$ that has $2$ children, the pair $\Delta_{v^*}(T)$ is in ${\bf T}\times{\bf T}$; 
\item \dfn{black-peaked} if for every $T\in{\bf T}$, the vertices with $2$ children in $T$ are all black.
\end{itemize} A \dfn{troupe} is a set of colored binary plane trees that is insertion-closed, decomposition-closed, and black-peaked. The article \cite{DefantTroupes} shows that there are uncountably many troupes (even if we only allow black vertices), gives several important examples of troupes, and characterizes troupes in terms of their \emph{branch generators}, which are essentially their ``indecomposable'' elements. The only specific troupe that we will need in this article is $\mathsf{BPT}$, the troupe of binary plane trees.

\subsection{Tree Traversals}\label{Subsec:Traversals}

If $X$ is a finite set of positive integers, then a \dfn{decreasing colored binary plane tree on $X$} is a colored binary plane tree whose vertices are bijectively labeled with the elements of $X$ so that every nonroot vertex has a label that is smaller than the label of its parent. If $\mathcal T$ is a decreasing colored binary plane tree, then the \dfn{skeleton} of $\mathcal T$, denoted $\skel(\mathcal T)$, is the colored binary plane tree obtained by removing the labels from $\mathcal T$. Given a set ${\bf T}$ of colored binary plane trees, we let $\mathsf{D}{\bf T}$ denote the set of decreasing colored binary plane trees $\mathcal T$ such that $\skel(\mathcal T)\in{\bf T}$. Thus, $\mathsf{DCBPT}$ is the set of all decreasing colored binary plane trees. 

The \dfn{in-order traversal} $\mathcal I$ and the \dfn{postorder traversal} $\mathcal P$ are two functions that send decreasing colored binary plane trees to permutations. If $\mathcal T$ is the empty tree, then $\mathcal I(\mathcal T)$ and $\mathcal P(\mathcal T)$ are both just the empty permutation. Now suppose $\mathcal T$ is a nonempty decreasing colored binary plane tree on $X$. Let $\mathcal T_L$ and $\mathcal T_R$ be the (possibly empty) left and right subtrees of the root of $\mathcal T$, respectively. Let $m=\max(X)$ be the label of the root of $\mathcal T$. The in-order and postorder traversals are defined recursively by \[\mathcal I(\mathcal T)=\mathcal I(\mathcal T_L)\,m\,\mathcal I(\mathcal T_R)\quad\text{and}\quad\mathcal P(\mathcal T)=\mathcal P(\mathcal T_L)\,\mathcal P(\mathcal T_R)\,m.\] It is known that $\mathcal I$ is a bijection from the set of decreasing binary plane trees on $X$ (recall that this means all vertices are black) to the set of permutations of $X$. Therefore, given a permutation $\pi$, we let $\mathcal I^{-1}(\pi)$ denote the unique binary plane tree whose in-order traversal is $\pi$. The connection between tree traversals and stack-sorting comes from the identity
\begin{equation}\label{sPcircI}
s=\mathcal P\circ\mathcal I^{-1}
\end{equation} (see, e.g., \cite[Chapter 8]{Bona}). For example, \[246153\xrightarrow{\mathcal I^{-1}}\begin{array}{l}\includegraphics[height=1.339cm]{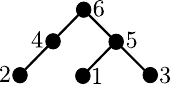}\end{array}\xrightarrow{\,\,\,\mathcal P\,\,\,}241356,\] and $s(246153)=s(24)\,s(153)\,6=s(2)\,4\,s(1)\,s(3)\,56=241356$.

If $T$ is a colored binary plane tree or a decreasing colored binary plane tree, then we let $\des(T)$, $\peak(T)$, and $\black(T)$ denote, respectively, the number of right edges of $T$, the number of vertices of $T$ that have $2$ children, and the number of black vertices of $T$. The use of the notation $\des$ comes from the easily-verified fact that the number of right edges of a decreasing binary plane tree $\mathcal T$ is the number of descents of $\mathcal I(\mathcal T)$. Similarly, the number of vertices of $\mathcal T$ that have $2$ children is the number of peaks of $\mathcal I(\mathcal T)$. 

In this article, a \dfn{tree statistic} is a function $f:\mathsf{CBPT}\to\mathbb C$. If we are given a tree statistic $f:\mathsf{CBPT}\to\mathbb C$, we define a function $\ddot f:\mathsf {DCBPT}\to\mathbb C$ by \[\ddot f(\mathcal T)=f(\skel(\mathcal T)).\] We say a tree statistic $f$ is \dfn{insertion-additive} if \[f(\nabla_v(T_1,T_2))=f(T_1)+f(T_2)\] for all nonempty colored binary plane trees $T_1$ and $T_2$ and all vertices $v$ in $T_1$. The maps $T\mapsto\des(T)+1$, $T\mapsto\peak(T)+1$, and $T\mapsto\black(T)+1$ are all examples of insertion-additive tree statistics. 

\subsection{Fertility Formulas}\label{Subsec:FertilityFormulas}

Recall that the fertility of a permutation $\pi$ is defined to be $|s^{-1}(\pi)|$. In his thesis, West \cite{West} detailed many lengthy computations in order to find the fertilities of some very specific types of permutations. Bousquet-M\'elou found an algorithm for determining whether or not a permutation has fertility $0$, and she asked for a general method for computing the fertility of an arbitrary permutation. The author has answered this question in several forms. One answer is given by the Fertility Formula, which we state below. First, we state the much more general Refined Tree Fertility Formula, from which the Fertility Formula easily follows. This result illustrates the utility of valid compositions. The article \cite{DefantTroupes} details several applications of this result to the study of stack-sorting, generalizations of stack-sorting, and the relationship between free and classical cumulants.

\begin{theorem}[Refined Tree Fertility Formula \cite{DefantTroupes}]\label{RTFF} 
Let ${\bf T}$ be a troupe, and let $f_1,\ldots,f_r$ be insertion-additive tree statistics. For every permutation $\pi$, we have \[\sum_{\mathcal T\in\mathcal P^{-1}(\pi)\cap\mathsf D{\bf T}}x_1^{\ddot f_1(\mathcal T)}\cdots x_r^{\ddot f_r(\mathcal T)}=\sum_{(q_0,\ldots,q_k)\in\mathcal V(\pi)}\prod_{t=0}^k\sum_{T\in{\bf T}_{q_t}}x_1^{f_1(T)}\cdots x_r^{f_r(T)}.\] 
\end{theorem}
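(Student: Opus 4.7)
The plan is to prove the identity by constructing an explicit bijection
\[
\Phi:\mathcal P^{-1}(\pi)\cap\mathsf D{\bf T}\longrightarrow\bigsqcup_{\mathcal H\in\VHC(\pi)}\prod_{t=0}^{k}{\bf T}_{q_t^{\mathcal H}},
\]
where $k=\des(\pi)$ and ${\bf q}^{\mathcal H}=(q_0^{\mathcal H},\ldots,q_k^{\mathcal H})$, with the extra property that if $\Phi(\mathcal T)=(\mathcal H,(T_0,\ldots,T_k))$, then $\skel(\mathcal T)$ can be obtained from the tuple $(T_0,\ldots,T_k)$ by a sequence of $k$ insertion operations $\nabla$. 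Granted such a $\Phi$, insertion-additivity of each $f_j$ gives $\ddot f_j(\mathcal T)=\sum_{t=0}^{k} f_j(T_t)$ term by term, so the theorem follows by partitioning the left-hand sum along the fibers of $\Phi$ and factoring.

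To build the forward map, I would take $\mathcal T\in\mathcal P^{-1}(\pi)\cap\mathsf D{\bf T}$ and recursively sever its right edges. Each right edge sits below a vertex $v^{*}$ with two children, which is black because ${\bf T}$ is black-peaked, so $\Delta_{v^{*}}$ is defined, and the two resulting pieces remain in ${\bf T}$ by decomposition-closedness. Iterating, one extracts an ordered tuple $(T_0,\ldots,T_k)$ of trees in ${\bf T}$, one for each of the $k+1$ color classes that will appear in the induced coloring of $\pi$. The locations in the plot of $\pi$ at which the severed right edges occurred determine candidate hooks $H_1,\ldots,H_k$, and one checks the three axioms of Definition~\ref{Def5} using the decreasing labeling (for the southwest-endpoint condition) and the planarity of the tree structure (to rule out points above a hook and forbidden hook overlaps). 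A comparison of the ``look upward'' coloring rule against the recursive subtree structure shows that $|T_t|$ equals the $t$-th entry of ${\bf q}^{\mathcal H}$. The inverse map reverses this process: using Remark~\ref{Rem6} one recovers $\mathcal H$ from ${\bf q}^{\mathcal H}$, identifies the color classes of the induced coloring with the intended homes of the $T_t$'s, and reglues via $\nabla$ at the appropriate vertices; insertion-closedness of ${\bf T}$ keeps the assembled skeleton in ${\bf T}$, and there is a unique labeling that turns the result into a decreasing colored binary plane tree with postorder $\pi$.

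The technical heart of the argument --- and the step I expect to be the main obstacle --- is verifying that the hooks produced by the forward construction do form an element of $\VHC(\pi)$ and that the sizes of the components $T_t$ really match ${\bf q}^{\mathcal H}$. This amounts to dovetailing two a priori distinct recursive structures: the combinatorial rules defining $\VHC(\pi)$ on the plot of $\pi$ and the right-edge decomposition of a decreasing tree. Once this correspondence is established, all three troupe axioms are used exactly once (black-peakedness to make $\Delta$ applicable at each right edge, decomposition-closedness to keep the forward pieces in ${\bf T}$, and insertion-closedness to keep the reassembled tree inside $\mathsf D{\bf T}$), and the statistic-preservation is a formal consequence of insertion-additivity, with the particular choice of statistics $f_1,\ldots,f_r$ playing no role in the bijection itself.
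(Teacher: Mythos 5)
First, a point of reference: this paper does not prove Theorem~\ref{RTFF} at all --- it is imported verbatim from \cite{DefantTroupes} --- so there is no internal proof to compare against. Your overall architecture (a bijection from $\mathcal P^{-1}(\pi)\cap\mathsf D{\bf T}$ to pairs consisting of a valid hook configuration $\mathcal H$ and a tuple of trees of sizes $q_0^{\mathcal H},\ldots,q_k^{\mathcal H}$, with the statistics handled formally by insertion-additivity and the three troupe axioms used exactly where you say they are used) is indeed the right one, and is essentially how the result is established in \cite{DefantTroupes} and its precursor \cite{DefantPostorder}, the latter proceeding inductively via the splitting map of Proposition~\ref{Prop11}.

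There is, however, a concrete error in your forward map. It is not true that every right edge of $\mathcal T$ sits below a vertex with two children, and one must not decompose at every right edge, nor even at every two-child vertex. The correct rule is to apply $\Delta_{v^*}$ exactly at those two-child vertices $v^*$ whose left child has a label \emph{larger} than the first entry of the postorder traversal of the right subtree of $v^*$; these are precisely the vertices whose left children become descent tops of $\pi=\mathcal P(\mathcal T)$, so there are exactly $k$ of them and the decomposition yields exactly $k+1$ pieces. A two-child vertex failing this label comparison must be left intact. For instance, $\mathcal I^{-1}(246153)$ has two vertices with two children, but its postorder traversal $241356$ has only one descent, so only one decomposition is performed (at the root, not at the vertex labeled $5$); severing at both would produce three pieces where ${\bf q}^{\mathcal H}=(2,3)$ demands two. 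Without this identification, the claimed count of $k+1$ color classes and the matching of $|T_t|$ with $q_t^{\mathcal H}$ cannot even be set up. Beyond this, the verification that the resulting hooks satisfy Conditions~\ref{Item2} and~\ref{Item3} of Definition~\ref{Def5} --- which you rightly flag as the technical heart --- is left entirely open, so what you have is a correct plan with its central step both unproven and, as written, misidentified.
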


In order to make the previous result more palatable, let us specialize it to the specific cases that will be most useful to us later. First, define the \dfn{Narayana numbers} $N(n,i)=\frac{1}{n}\binom{n}{i}\binom{n}{i-1}$. These numbers constitute one of the most common refinements of the sequence of Catalan numbers. We define the $n^\text{th}$ \dfn{Narayana polynomial} by $N_n(x)=\sum_{i=1}^nN(n,i)x^i$. It is known \cite{BonaSymmetry, DefantTroupes} that 
\begin{equation}\label{EqNar}
N_n(x)=\sum_{T\in\mathsf{BPT}_n}x^{\des(T)+1}=\sum_{\mathcal T\in\mathsf{DBPT}_n\cap\mathcal P^{-1}(\id_n)}x^{\des(\mathcal T)+1}=\sum_{\sigma\in s^{-1}(\id_n)}x^{\des(\sigma)+1}.
\end{equation} Let us remark that the identity $\displaystyle\sum_{T\in\mathsf{BPT}_n}x^{\des(T)+1}=\sum_{\mathcal T\in\mathsf{DBPT}_n\cap\mathcal P^{-1}(\id_n)}x^{\des(\mathcal T)+1}$ follows from the well-known (and easily-verified) fact that every binary plane tree with $n$ vertices can be labeled in a unique way to obtain a decreasing binary plane tree whose postorder traversal is $\id_n$. The identity $\displaystyle \sum_{\mathcal T\in\mathsf{DBPT}_n\cap\mathcal P^{-1}(\id_n)}x^{\des(\mathcal T)+1}=\sum_{\sigma\in s^{-1}(\id_n)}x^{\des(\sigma)+1}$ follows from \eqref{sPcircI} and the fact that $\des(\sigma)=\des(\mathcal I^{-1}(\sigma))$ for every permutation $\sigma$. For a composition ${\bf q}=(q_0,\ldots,q_k)$, let $N_{\bf q}(x)=\prod_{i=0}^kN_{q_i}(x)$.

The next corollary is a special case of the Refined Fertility Formula from \cite{DefantTroupes}. It was essentially first proven in \cite{DefantPostorder}, and it has been used in \cite{DefantCounting, DefantClass, DefantTroupes} to analyze the descent polynomials of stack-sorting preimages of various sets of permutations. For example, this result easily implies B\'ona's main result from \cite{BonaSymmetry}, which states that $\sum_{\sigma\in s^{-1}(\pi)}x^{\des(\sigma)+1}$ is a symmetric and unimodal polynomial for every permutation $\pi$. To prove the following corollary, we simply combine \eqref{EqNar} with the special case of Theorem~\ref{RTFF} in which ${\bf T}=\mathsf{BPT}$, $r=1$, $x_1=x$, and $f(T)=\des(T)+1$. 

\begin{corollary}[Refined Fertility Formula \cite{DefantPostorder, DefantTroupes}]\label{RFF'}
For every permutation $\pi$, we have \[\sum_{\sigma\in s^{-1}(\pi)}x^{\des(\sigma)+1}=\sum_{{\bf q}\in\mathcal V(\pi)}N_{{\bf q}}(x).\] 
\end{corollary}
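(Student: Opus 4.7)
The plan is to follow exactly the recipe sketched in the paragraph preceding the corollary: specialize Theorem~\ref{RTFF} to ${\bf T}=\mathsf{BPT}$, $r=1$, $x_1=x$, and $f=f_1$ defined by $f(T)=\des(T)+1$, and then translate both sides using \eqref{EqNar} and \eqref{sPcircI}. I would first verify that $f$ is insertion-additive, which is immediate because $\des(T)$ counts right edges and the operation $\nabla_v(T_1,T_2)$ introduces exactly one new right edge while adjoining $T_2$ as a right subtree; hence $\des(\nabla_v(T_1,T_2))+1=(\des(T_1)+1)+(\des(T_2)+1)$, confirming that Theorem~\ref{RTFF} applies.

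Next I would handle the right-hand side of the specialized identity. Since $f(T)=\des(T)+1$, the inner sum $\sum_{T\in\mathsf{BPT}_{q_t}}x^{f(T)}=\sum_{T\in\mathsf{BPT}_{q_t}}x^{\des(T)+1}$ equals $N_{q_t}(x)$ by the first equality in \eqref{EqNar}. Taking the product over $t$ and recalling the definition $N_{\bf q}(x)=\prod_{t=0}^k N_{q_t}(x)$ turns the right-hand side of Theorem~\ref{RTFF} into exactly $\sum_{{\bf q}\in\mathcal V(\pi)}N_{\bf q}(x)$, which is the desired right-hand side of the corollary.

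For the left-hand side, I would use the bijection $\mathcal I\colon\mathsf{DBPT}\to\bigcup_n S_n$ with inverse $\mathcal I^{-1}$. By \eqref{sPcircI} we have $s(\sigma)=\mathcal P(\mathcal I^{-1}(\sigma))$, so $\sigma\in s^{-1}(\pi)$ if and only if $\mathcal I^{-1}(\sigma)\in\mathcal P^{-1}(\pi)\cap\mathsf{DBPT}$; thus $\sigma\mapsto\mathcal I^{-1}(\sigma)$ is a bijection $s^{-1}(\pi)\to\mathcal P^{-1}(\pi)\cap\mathsf{DBPT}$. Since $\ddot f(\mathcal T)=f(\skel(\mathcal T))=\des(\mathcal T)+1$ (as $\skel$ preserves right edges), and since the well-known identity $\des(\sigma)=\des(\mathcal I^{-1}(\sigma))$ holds for every permutation $\sigma$, the left-hand side of Theorem~\ref{RTFF} becomes
\[
\sum_{\mathcal T\in\mathcal P^{-1}(\pi)\cap\mathsf{DBPT}}x^{\des(\mathcal T)+1}=\sum_{\sigma\in s^{-1}(\pi)}x^{\des(\sigma)+1},
\]
matching the left-hand side of the corollary.

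There is no real obstacle here, since the corollary is essentially a transparent specialization of Theorem~\ref{RTFF}; the only mild subtlety is bookkeeping, namely checking that $\des(T)+1$ is insertion-additive and that the three natural occurrences of $\des$ (on permutations, on labeled trees, and on their skeletons) all agree under the bijections $\mathcal I$ and $\skel$. Both checks are routine.
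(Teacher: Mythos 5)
Your proposal is correct and follows exactly the route the paper indicates: specializing Theorem~\ref{RTFF} to ${\bf T}=\mathsf{BPT}$ with $f(T)=\des(T)+1$, then translating the two sides via \eqref{EqNar} and \eqref{sPcircI}. The paper leaves these verifications implicit, and your bookkeeping of insertion-additivity and the agreement of the various $\des$ statistics fills them in correctly.
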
 

Finally, by setting $x=1$ in the previous corollary, we obtain the Fertility Formula, which has been a crucial tool for analyzing the stack-sorting map in recent years \cite{DefantCatalan, DefantClass, DefantFertility, DefantFertilityWilf, DefantEngenMiller, DefantPreimages}. 

\begin{corollary}[Fertility Formula \cite{DefantPostorder, DefantTroupes}]\label{FF}
For every permutation $\pi$, we have \[|s^{-1}(\pi)|=\sum_{{\bf q}\in\mathcal V(\pi)}C_{\bf q}.\] 
\end{corollary}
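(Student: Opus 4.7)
The plan is to derive the identity as a direct specialization of the Refined Fertility Formula (Corollary~\ref{RFF'}) at $x=1$. The left-hand side of Corollary~\ref{RFF'} becomes $\sum_{\sigma\in s^{-1}(\pi)} 1^{\des(\sigma)+1} = |s^{-1}(\pi)|$, which is by definition the fertility of $\pi$. For the right-hand side, the only ingredient I need is the classical identity $N_n(1) = C_n$ for every $n \geq 1$, i.e., that the Narayana numbers $N(n,i) = \tfrac{1}{n}\binom{n}{i}\binom{n}{i-1}$ sum to the $n^\text{th}$ Catalan number $C_n = \tfrac{1}{n+1}\binom{2n}{n}$. Given this, the product structure of $N_{\bf q}$ yields
\[
N_{\bf q}(1) \;=\; \prod_{i=0}^k N_{q_i}(1) \;=\; \prod_{i=0}^k C_{q_i} \;=\; C_{\bf q},
\]
and substitution into Corollary~\ref{RFF'} produces the Fertility Formula.

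The identity $N_n(1) = C_n$ is entirely standard, so I would simply cite it; a one-line combinatorial justification is that $\mathsf{BPT}_n$ is a disjoint union over the number of right edges and $|\mathsf{BPT}_n| = C_n$, so evaluating \eqref{EqNar} at $x=1$ already gives $N_n(1) = |\mathsf{BPT}_n| = C_n$ for free. Since every substantive step has already been carried out in the proof of Corollary~\ref{RFF'} (which in turn specializes Theorem~\ref{RTFF}), there is no real obstacle here: the entire argument is a two-line specialization.

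As an alternative avoiding the passage through descent polynomials, one could apply Theorem~\ref{RTFF} directly with ${\bf T} = \mathsf{BPT}$ and no tree statistics (i.e., all $x_t = 1$). The right-hand side immediately becomes $\sum_{{\bf q}\in\mathcal V(\pi)} \prod_{t=0}^k |\mathsf{BPT}_{q_t}| = \sum_{{\bf q}\in\mathcal V(\pi)} C_{\bf q}$, while the left-hand side equals $|\mathcal P^{-1}(\pi) \cap \mathsf{DBPT}|$. Using \eqref{sPcircI} and the bijectivity of $\mathcal I$ on decreasing binary plane trees, this cardinality is precisely $|s^{-1}(\pi)|$. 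Either route establishes the corollary.
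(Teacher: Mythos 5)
Your proposal is correct and matches the paper's argument exactly: the paper obtains the Fertility Formula by setting $x=1$ in Corollary~\ref{RFF'}, which amounts to the identity $N_{\bf q}(1)=C_{\bf q}$ that you justify. Your alternative route through Theorem~\ref{RTFF} with no tree statistics is also valid but offers nothing beyond the specialization already carried out in the paper.
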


Following Bousquet-M\'elou, we say a permutation $\pi$ is \dfn{sorted} if its fertility is positive (i.e., it is in the image of $s$). It follows from the Fertility Formula that $\pi$ is sorted if and only if $\mathcal V(\pi)\neq\emptyset$. 

\begin{remark}\label{Rem1} 
Suppose $\pi$ is a permutation that has a valid hook configuration. Then $\pi$ is sorted, so it is immediate from the definition of $s$ that $\pi$ ends in its largest entry.
\end{remark}

\subsection{Cumulants}
Free probability is a relatively new area of mathematics that began with the work of Voiculescu \cite{Voiculescu, Voiculescu2} in the 1980's; it has found applications in a wide variety of mathematical fields, now including stack-sorting. In this subsection, we discuss the VHC Cumulant Formula, which is responsible for the surprising and useful connection between stack-sorting and free probability. This formula first emerged in \cite{DefantEngenMiller} and was formalized in \cite{DefantTroupes}. We will hardly discuss any details of free probability theory; a standard reference that the interested reader can consult is \cite{Nica}.

Let $\Pi(n)$ denote the collection of all set partitions of the set $[n]$. Given $\rho\in\Pi(n)$, we say two distinct blocks $B,B'\in\rho$ form a \dfn{crossing} if there exist $i,j\in B$ and $i',j'\in B'$ such that either $i<i'<j<j'$ or $i>i'>j>j'$. A partition is \dfn{noncrossing} if no two of its blocks form a crossing. Let $\NC(n)$ denote the collection of all noncrossing partitions of $[n]$. 

Let $\mathbb K$ be a field, and let $(m_n)_{n\geq 1}$ be a sequence of elements of $\mathbb K$, which we call a \dfn{moment sequence}. Associated to this moment sequence is the sequence $(c_n)_{n\geq 1}$ of \dfn{classical cumulants}, which is defined implicitly by the identity \[m_n=\sum_{\rho\in\Pi(n)}\prod_{B\in \rho}c_{|B|}.\] Similarly, there is a sequence $(\kappa_n)_{n\geq 1}$ of \dfn{free cumulants} associated to the moment sequence, which is defined implicitly by the identity \[m_n=\sum_{\eta\in\NC(n)}\prod_{B\in \rho}\kappa_{|B|}.\] Speicher introduced free cumulants in \cite{Speicher} in order to afford a combinatorial approach to Voiculescu's free probability theory. 

Any one of the sequences $(m_n)_{n\geq 1}$, $(c_n)_{n\geq 1}$, $(\kappa_n)_{n\geq 1}$ determines the other two. In particular, if we are given a sequence $(\kappa_n)_{n\geq 1}$ of free cumulants, then there is a unique corresponding sequence $(c_n)_{n\geq 1}$ of classical cumulants. Two other types of cumulants that will not concern us, but which have been studied in the literature, are \emph{Boolean cumulants} and \emph{monotone cumulants}. There has been a lot of attention in recent years devoted to finding formulas that convert from one type of cumulant sequence to another \cite{Arizmendi, Belinschi, Celestino, DefantTroupes, Ebrahimi, Lehner, Josuat}. The VHC Cumulant Formula, which we now state, is one such formula. What makes this formula different from the others is that it has applications to the combinatorics of troupes and stack-sorting, topics that one would not expect to have any connections with cumulants. Many of these applications are described in \cite{DefantTroupes}. Given a sequence $(\kappa_n)_{n\geq 1}$ and a composition ${\bf q}=(q_0,\ldots,q_k)$, we write $(-\kappa_{\bullet+1})_{\bf q}$ for the product $\prod_{i=0}^p(-\kappa_{q_i+1})$. 

\begin{theorem}[VHC Cumulant Formula \cite{DefantTroupes}]\label{VHCCF}
If $(\kappa_n)_{n\geq 1}$ is a sequence of free cumulants, then the corresponding classical cumulants are given by \[-c_n=\sum_{\pi\in S_{n-1}}\sum_{{\bf q}\in\mathcal V(\pi)}(-\kappa_{\bullet+1})_{\bf q}.\] 
\end{theorem}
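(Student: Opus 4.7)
My plan is to translate the right-hand side into a sum over noncrossing partitions of $[n]$ and then match with the classical moment-cumulant formula. The key first observation, supported by inspection of small cases, is that each pair $(\pi, \mathcal{H})$ with $\pi \in S_{n-1}$ and $\mathcal{H} \in \VHC(\pi)$ naturally determines a noncrossing partition $\eta(\pi, \mathcal{H}) \in \NC(n)$ whose block sizes are precisely $q_0+1, \ldots, q_k+1$ where ${\bf q}^{\mathcal{H}} = (q_0, \ldots, q_k)$. The construction augments each colored region of the plot: the sky-colored points together with a virtual element $n$ form one block, and the points in each hook $H_i$'s region together with $H_i$'s northeast endpoint form another. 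Because hooks nest without crossing by Definition~\ref{Def5}, the resulting partition is noncrossing. This reinterprets the RHS as $\sum_{(\pi,\mathcal{H})} \prod_{B \in \eta(\pi,\mathcal{H})}(-\kappa_{|B|})$.

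The next step is to show that this sum equals $-c_n$. I would pass through the moment sequence: use the free moment-cumulant relation $m_\ell = \sum_{\eta \in \NC(\ell)} \prod_B \kappa_{|B|}$ to expand moments, and use M\"obius inversion on the partition lattice $\Pi(n)$ to write $-c_n = -\sum_{\rho \in \Pi(n)} \mu_\Pi(\rho, \hat{1}) \prod_B m_{|B|}$. Substituting the first into the second yields a sum indexed by \emph{towers}: an outer set partition $\rho$ together with an inner noncrossing refinement of each block of $\rho$. The heart of the proof is then a bijection (or sign-reversing involution) identifying this tower sum, after cancellation, with the sum over VHC-induced noncrossing partitions of the form $\eta(\pi,\mathcal{H})$.

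The main obstacle will be sign management. The outer M\"obius coefficients on $\Pi(n)$ introduce alternating signs and factorial weights depending on $\rho$, while the VHC side carries only the uniform $(-1)^{k+1}$ sign from $\prod(-\kappa_{q_i+1})$. A direct term-by-term matching is therefore unlikely; rather, one expects the crossing-partition contributions to cancel through a sign-reversing involution, leaving only the noncrossing configurations that arise as some $\eta(\pi, \mathcal{H})$. If this cancellation proves too delicate, a cleaner alternative is induction on $n$: the classical recursion $c_n = m_n - \sum_{\rho \neq \hat{1}} \prod_B c_{|B|}$, combined with the recursive structure of a VHC (peeling off the outermost hook decomposes the problem into VHCs on smaller independent sub-permutations, a decomposition compatible with Remark~\ref{Rem1}), should allow the formula to propagate inductively and localize all sign bookkeeping to a single inductive step.
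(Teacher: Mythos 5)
First, note that this paper does not prove Theorem~\ref{VHCCF}: it is quoted from \cite{DefantTroupes} and used as a black box, so there is no in-paper argument to compare yours against. Judged on its own, your opening step is sound and worth keeping. Attaching the northeast endpoint of $H_i$ to the $q_i$ points in its color class, and a virtual element $n$ to the sky class, does produce a partition of an $n$-element set into blocks of sizes $q_0+1,\ldots,q_k+1$ (these sum to $(n-1-k)+(k+1)=n$), and Conditions (2) and (3) of Definition~\ref{Def5} force the blocks to nest rather than cross. So rewriting the right-hand side as $\sum_{(\pi,\mathcal H)}\prod_{B\in\eta(\pi,\mathcal H)}(-\kappa_{|B|})$ is legitimate.

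The gap is that this rewriting is only a change of notation, and everything after it is a wish rather than an argument. The map $(\pi,\mathcal H)\mapsto\eta(\pi,\mathcal H)$ is neither injective nor surjective onto $\NC(n)$ (for $n=4$ only two of the fourteen noncrossing partitions arise, matching $-c_4=-\kappa_4+\kappa_2^2$), so the theorem becomes the claim that the multiplicity function $m(\eta)=|\{(\pi,\mathcal H):\eta(\pi,\mathcal H)=\eta\}|$ satisfies $\sum_{\eta}m(\eta)\prod_{B}(-\kappa_{|B|})=-c_n$ --- and nothing in your proposal computes $m(\eta)$ or establishes this identity. Both of your suggested routes stop precisely at the hard point. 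The M\"obius-inversion route must reconcile the coefficients $\mu_{\Pi(n)}(\rho,\hat 1)=(-1)^{|\rho|-1}(|\rho|-1)!$ --- alternating signs \emph{with factorial weights} --- against the uniform sign $(-1)^{k+1}$ on the VHC side; the required sign-reversing involution on towers is not exhibited, and producing it is essentially the entire theorem (this is the depth of Lehner-type connected-partition formulas). The inductive route is not set up correctly either: the recursion $c_n=m_n-\sum_{\rho\neq\hat 1}\prod_B c_{|B|}$ involves arbitrary (crossing) partitions of $[n]$ and products of classical cumulants, whereas the $H$-splitting decomposition of Proposition~\ref{Prop11} only factors $\mathcal V(\pi)$ as a Cartesian product of two smaller sets of valid compositions; you give no mechanism matching these two very different recursive structures, and it is exactly there that "all sign bookkeeping" would have to happen. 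For what it is worth, the proofs in \cite{DefantEngenMiller} and \cite{DefantTroupes} do not attempt a term-by-term matching of this kind; they derive the formula from the recursive structure of valid hook configurations together with generating-function identities relating the free and classical cumulant series.
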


\begin{remark}
As discussed in \cite{DefantTroupes}, the VHC Cumulant Formula actually extends to the more general setting of \emph{multivariate cumulants}. 
\end{remark}

For an explicit example of the VHC Cumulant Formula, suppose $(\kappa_n)_{n\geq 1}$ is a sequence of free cumulants, and let $(c_n)_{n\geq 1}$ be the corresponding sequence of classical cumulants. Let us express $c_4$ in terms of the free cumulants $\kappa_n$. The only valid hook configurations of permutation in $S_3$ are (drawn with their induced colorings)
\[\begin{array}{l}\includegraphics[height=1.369cm]{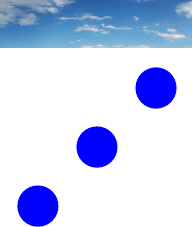}\end{array}\quad\text{and}\quad\begin{array}{l}\includegraphics[height=1.369cm]{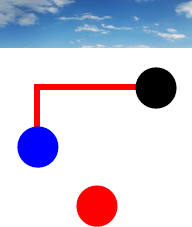}\end{array}.\] The valid compositions induced by these valid hook configurations are $(3)$ and $(1,1)$. Therefore, Theorem~\ref{VHCCF} tells us that $-c_4=(-\kappa_{3+1})+(-\kappa_{1+1})(-\kappa_{1+1})=-\kappa_4+\kappa_2^2$. 

\subsection{Binary Nestohedra}\label{Subsec:BinaryNestohedra}
We defined binary nestohedra in Section~\ref{Sec:Intro}; we now rephrase this definition in terms of trees and state some properties of these polytopes. As before, we let $e_1,\ldots,e_{k+1}$ denote the standard basis vectors in $\mathbb R^{k+1}$. Let $\conv(Q)$ denote the convex hull of a set $Q\subseteq\mathbb R^{k+1}$. For $I\subseteq[k+1]$, let $\Delta_I=\conv(\{e_i:i\in I\})$.  

A \dfn{building set on $[k+1]$} is a collection $\mathcal B$ of nonempty subsets of $[k+1]$ that contains all singletons $\{1\},\ldots,\{k+1\}$ and has the property that if $I,J\in\mathcal B$ and $I\cap J\neq\emptyset$, then $I\cup J\in\mathcal B$. Given a building set $\mathcal B$ and a tuple ${\bf y}=(y_I)_{I\in\mathcal B}\in\mathbb R_{>0}^{\mathcal B}$ of positive real numbers indexed by the sets in $\mathcal B$, we can form the polytope $\Nest({\bf y})=\sum_{I\in\mathcal B}({\bf y})$, where the sum denotes Minkowski sum. A polytope $\Nest({\bf y})$ obtained in this manner is called a \dfn{nestohedron}.

A binary plane tree is called \dfn{full} if each of its vertices has either $0$ or $2$ children. Choose a full binary plane tree $T$ with $k+1$ leaves (and $2k+1$ vertices in total), and identify the leaves with the singleton sets $\{1\},\ldots,\{k+1\}$ from left to right. Identify each internal vertex of $T$ with the union of the leaves lying weakly below it in $T$. Notice that each vertex is an interval in $[k+1]$, meaning that it is of the form $\{i,\ldots,j\}$ for some $1\leq i\leq j\leq k+1$. Now choose a set $\mathcal B$ of vertices of $T$ such that every singleton set (i.e., every leaf) is in $\mathcal B$. We call a collection of sets $\mathcal B$ obtained in this way a \dfn{binary building set}; note that a binary building set is indeed a building set. Equivalently, one can define a binary building set on $[k+1]$ to be a collection of nonempty subsets of $[k+1]$ such that 
\begin{itemize}
\item every set in $\mathcal B$ is an interval;
\item every singleton subset of $[k+1]$ is in $\mathcal B$;
\item if $I,J\in\mathcal B$ are not disjoint, then either $I\subseteq J$ or $J\subseteq I$. 
\end{itemize}

Figure~\ref{Fig1} shows a full binary plane tree with $5$ leaves, where we have identified the vertices of the tree with subsets of $\{1,2,3,4,5\}$. Let $\mathcal B=\{\{1\},\{2\},\{3\},\{4\},\{5\},\{2,3\},\{1,2,3,4,5\}\}$ be the collection of vertices that are drawn in boxes. 

\begin{figure}[ht]
  \begin{center}{\includegraphics[height=3.448cm]{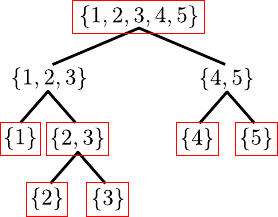}}
  \end{center}
  \caption{A full binary plane tree with vertices identified with subsets of $\{1,2,3,4,5\}$. The sets in boxes form a binary building set.}\label{Fig1}
\end{figure}

Given a binary building set $\mathcal B$ on $[k+1]$, we can choose a tuple ${\bf y}=(y_I)_{I\in\mathcal B}\in\mathbb R_{>0}^{\mathcal B}$ of positive real numbers indexed by the sets in $\mathcal B$ and form the nestohedron $\Nest({\bf y})=\sum_{I\in\mathcal B}y_I\Delta_I$ as above. We call a nestohedron obtained from a binary building set in this manner a \dfn{binary nestohedron}. We make the convention that binary nestohedra are nonempty. 

We now review some facts about nestohedra from \cite{Feichtner, Postnikov2, Postnikov}. To begin, let us fix a building set $\mathcal B$ on $[k+1]$. Let $\mathcal B_{\max}$ be the collection of sets in $\mathcal B$ that are maximal by inclusion. Following \cite{Postnikov, Zelevinsky}, we say a set $N\subseteq\mathcal B\setminus\mathcal B_{\max}$ is a \dfn{nested set} for $\mathcal B$ if 
\begin{itemize}
\item for all $I,J\in N$ with $I\cap J\neq\emptyset$, we have either $I\subseteq J$ or $J\subseteq I$;
\item for all $\ell\geq 2$ and all pairwise-disjoint sets $J_1,\ldots,J_\ell\in N$, the union $J_1\cup\cdots\cup J_\ell$ is not in $\mathcal B$.
\end{itemize} Observe that the first bulleted condition is automatically satisfied if $\mathcal B$ is a binary building set. The \dfn{nested set complex} $\Delta_{\mathcal B}$ is the simplicial complex consisting of all nested sets for $\mathcal B$. The next theorem was discovered independently in \cite{Feichtner} and \cite{Postnikov2}. 

\begin{theorem}[\!\!\cite{Feichtner, Postnikov2}]\label{Thm:Feichtner}
Let $\mathcal B$ be a building set on $[k+1]$. For any tuple ${\bf y}=(y_I)_{I\in\mathcal B}\in\mathbb R_{>0}^{\mathcal B}$, the nestohedron $\Nest({\bf y})$ is a simple polytope of dimension $k+1-|\mathcal B_{\max}|$. The dual simplicial complex of $\Nest({\bf y})$ is isomorphic to the nested set complex $\Delta_{\mathcal B}$. 
\end{theorem}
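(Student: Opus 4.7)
The plan is to work with the normal fan $\mathcal{N}$ of $\Nest({\bf y})$ and invoke the standard fact that it equals the common refinement of the normal fans of the scaled simplices $y_I\Delta_I$. Both the simplicity assertion and the dual-complex statement will then reduce to a combinatorial identification of the cones of $\mathcal{N}$ with nested sets for $\mathcal{B}$.

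First I would pin down the dimension. Any two maximal sets in $\mathcal{B}_{\max}$ are disjoint, for if $M_1\cap M_2\neq\emptyset$ then $M_1\cup M_2\in\mathcal{B}$, contradicting maximality; since every singleton lies in $\mathcal{B}$, the family $\mathcal{B}_{\max}$ partitions $[k+1]$. For each $M\in\mathcal{B}_{\max}$, the linear form $\sum_{i\in M}x_i$ is constant on $\Nest({\bf y})$, giving $|\mathcal{B}_{\max}|$ independent affine equations. Dually, the lineality space of $\mathcal{N}$ is the set of $v\in(\mathbb{R}^{k+1})^*$ that are constant on each $I\in\mathcal{B}$; since every element of $\mathcal{B}$ is contained in a unique block of $\mathcal{B}_{\max}$, this lineality space is exactly the $|\mathcal{B}_{\max}|$-dimensional space of vectors constant on each such block, yielding the claimed dimension $k+1-|\mathcal{B}_{\max}|$.

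Next, for each $I\in\mathcal{B}$, the normal fan of $y_I\Delta_I$ has maximal cones $C_{I,i}=\{v:v_i\geq v_j\text{ for all }j\in I\}$ indexed by $i\in I$. A maximal cone of $\mathcal{N}$ is therefore a full-dimensional intersection $\bigcap_{I\in\mathcal{B}}C_{I,\varphi(I)}$ for some selection function $\varphi:\mathcal{B}\to[k+1]$ with $\varphi(I)\in I$. The heart of the argument is a bijection between such full-dimensional selection functions and maximal nested sets $N$ of $\mathcal{B}$: given $N$, organize $N\cup\mathcal{B}_{\max}$ as a forest under reverse inclusion and recursively choose $\varphi(I)$, for each $I\in\mathcal{B}$, to be an element of $I$ not already allocated by the selections made on the maximal proper elements of the forest contained in $I$. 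Conversely, from $\varphi$ recover $N$ as the collection of those $I\in\mathcal{B}\setminus\mathcal{B}_{\max}$ whose $\varphi$-value differs from $\varphi(I^+)$, where $I^+$ is the smallest element of $\mathcal{B}$ strictly containing $I$. Extending this correspondence to nested sets of all sizes yields the isomorphism between $\Delta_{\mathcal{B}}$ and the dual complex of $\Nest({\bf y})$.

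The hardest step will be verifying that the bijection matches full-dimensional cones with maximal nested sets on the nose: both nested-set axioms — pairwise comparability and the absence of forbidden disjoint unions — must translate precisely to nondegeneracy of the intersection $\bigcap_{I\in\mathcal{B}}C_{I,\varphi(I)}$ modulo the lineality space, and one must check that each maximal cone is cut out by exactly $\dim\Nest({\bf y})$ facet-defining halfspaces, which is what makes $\Nest({\bf y})$ simple. A clean way to carry this out is by induction on $|\mathcal{B}|$: remove a minimal non-singleton $I_0\in\mathcal{B}$, apply the inductive description of the nestohedron for $\mathcal{B}\setminus\{I_0\}$, and analyze how adjoining the Minkowski summand $y_{I_0}\Delta_{I_0}$ refines precisely the cones in which the chosen representative for $I_0$ is consistent with the rest of the selection. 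The base case in which $\mathcal{B}$ consists only of singletons is immediate, since then $\Nest({\bf y})$ is a single point and the nested set complex is empty.
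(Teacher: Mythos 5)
First, a point of context: the paper does not prove this statement at all --- it is quoted from Feichtner--Sturmfels and Postnikov --- so there is no internal proof to compare against. Your strategy (normal fan of the Minkowski sum as the common refinement of the normal fans of the $y_I\Delta_I$, identification of maximal cones with ``coherent'' selection functions, bijection with maximal nested sets) is exactly the route taken in those references, and your dimension computation is complete and correct: $\mathcal B_{\max}$ partitions $[k+1]$, each block gives an affine equation, and the lineality space of the normal fan is precisely the space of functionals constant on each block.

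However, as a proof the proposal has a genuine gap, and one of the concrete steps you do write down fails. The gap: the entire content of the theorem beyond the dimension count is the claim that full-dimensional intersections $\bigcap_{I\in\mathcal B}C_{I,\varphi(I)}$ biject with maximal nested sets and that each such cone is simplicial modulo lineality; you explicitly defer this (``the hardest step will be verifying\ldots''), so nothing is actually established about simplicity or the dual complex. The broken step: your inverse map sends $\varphi$ to the set of $I\in\mathcal B\setminus\mathcal B_{\max}$ with $\varphi(I)\neq\varphi(I^+)$, where $I^+$ is ``the smallest element of $\mathcal B$ strictly containing $I$.'' For a general building set such a smallest element need not exist: in $\mathcal B=\{\{1\},\{2\},\{3\},\{1,2\},\{2,3\},\{1,2,3\}\}$ the set $\{2\}$ has two incomparable minimal proper oversets, $\{1,2\}$ and $\{2,3\}$. (This recipe happens to work for the paper's binary building sets, where all oversets of a given set form a chain, but the theorem is stated for arbitrary building sets.) The forward map is also underdetermined as written, since $\varphi$ must be specified on all of $\mathcal B$, not just on $N\cup\mathcal B_{\max}$, and ``an element of $I$ not already allocated'' does not single out a unique choice. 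The correct correspondence goes through the linear functional: a generic $\lambda$ in a maximal cone determines $\varphi(I)=\arg\max_{i\in I}\lambda_i$ for every $I$, and the associated maximal nested set is read off from the resulting order, with the nested-set axioms (comparability and no forbidden disjoint unions) entering precisely when one checks which selections yield full-dimensional cones. Your proposed induction on $|\mathcal B|$ by deleting a minimal non-singleton is a legitimate way to organize that check (deletion does preserve the building-set axioms in that case), but until the refinement analysis in the inductive step is carried out, the simplicity assertion and the isomorphism with $\Delta_{\mathcal B}$ remain unproved.
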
 

The previous theorem tells us that there is a bijection between nonempty faces of $\Nest({\bf y})$ and nested sets for $\mathcal B$. We can make this bijection explicit as follows. First, for each set $I\in\mathcal B$, let $z_I=\sum_{J\subseteq I}y_J$, where the sum is over sets $J\in\mathcal B$ that are contained in $I$. Given a nested set $N$ for $\mathcal B$, we define 
\begin{equation}\label{Eq:FN}
F_N=\left\{(x_1,\ldots,x_{k+1})\in\mathbb R^{k+1}:\sum_{i\in I}x_i=z_I\text{ for all }I\in N\text{ and }\sum_{j\in J}x_j\geq z_J\text{ for all }J\in\mathcal B\right\}.
\end{equation}

\begin{theorem}[\!\!\cite{Postnikov2}]\label{Thm:PostnikovFN}
Let $\mathcal B$ be a building set on $[k+1]$, and let ${\bf y}\in\mathbb R_{>0}^{\mathcal B}$. The map $N\mapsto F_N$ gives a bijection between nested sets for $\mathcal B$ and nonempty faces of $\Nest({\bf y})$. Moreover, we have $\dim(F_N)=k+1-|\mathcal B_{\max}|-|N|$ for every nested set $N$. 
\end{theorem}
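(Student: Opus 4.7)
The plan is to proceed in three stages: first establish an explicit inequality description of $\Nest({\bf y})$, then recognize each $F_N$ as a face cut out by supporting hyperplanes, and finally combine these facts with Theorem~\ref{Thm:Feichtner} to obtain the bijection and the dimension formula.

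For the first stage, I would prove that
\[
\Nest({\bf y}) \;=\; \left\{x\in\mathbb{R}^{k+1} : \sum_{i\in M}x_i = z_M \text{ for all } M\in\mathcal{B}_{\max},\ \ \sum_{i\in I}x_i \geq z_I \text{ for all } I\in\mathcal{B}\right\}.
\]
The forward containment is a direct Minkowski sum calculation: for $x=\sum_{I\in\mathcal{B}}y_I\,x^{(I)}$ with $x^{(I)}\in\Delta_I$, the inner quantity $\sum_{i\in A}x^{(I)}_i$ equals $1$ when $I\subseteq A$ and is nonnegative otherwise, so $\sum_{i\in A}x_i \geq \sum_{I\subseteq A}y_I = z_A$ for every $A\subseteq[k+1]$. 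When $A=M\in\mathcal{B}_{\max}$, the maximality of $M$ in $\mathcal{B}$ forces every $I\in\mathcal{B}$ that meets $M$ to be contained in $M$, so the lower bound is attained with equality. The reverse containment can be established by induction on $|\mathcal{B}|$, or by checking that the two polytopes share the same vertex set.

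For the second stage, observe that $x\mapsto\sum_{i\in I}x_i$ attains its minimum value $z_I$ on $\Nest({\bf y})$, so the hyperplane $\sum_{i\in I}x_i = z_I$ is a supporting hyperplane for every $I\in\mathcal{B}$. Hence $F_N$ is the intersection of $\Nest({\bf y})$ with supporting hyperplanes and is therefore a face. A double-counting computation yields the supermodular relation $z_I + z_J \leq z_{I\cup J} + z_{I\cap J}$; combined with the equalities defining a face, this forces the tight-inequality set $\tilde{N}(F) = \{I\in\mathcal{B} : \sum_{i\in I}x_i = z_I \text{ on } F\}$ to be closed under taking unions of intersecting pairs, which is the key structural property on the face side. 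Nonemptiness of $F_N$ can be proved by a greedy allocation of mass to coordinates, processing $N\cup\mathcal{B}_{\max}$ from smaller sets to larger ones and using the supermodular relation to guarantee that each required equality is met without violating any pending inequality.

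Finally, Theorem~\ref{Thm:Feichtner} already supplies an abstract bijection between nonempty faces of $\Nest({\bf y})$ and nested sets, with dimensions $k+1-|\mathcal{B}_{\max}|-|N|$. To identify this bijection with $N\mapsto F_N$, it suffices to check that $N\mapsto F_N$ is injective, and this is where the union-closure of $\tilde{N}(F)$ comes in: given a face $F$, one recovers $N$ uniquely as the canonical laminar generating subfamily of $\tilde{N}(F)\setminus\mathcal{B}_{\max}$ under unions of intersecting pairs. For the dimension claim, one must show that the indicator vectors $\{\mathbf{1}_I : I\in N\cup\mathcal{B}_{\max}\}$ are linearly independent; any dependence among indicator vectors of a laminar family would express some set in the family as a disjoint union of others, and this is precisely what is forbidden by the second nested-set axiom. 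The main obstacle is the nonemptiness of $F_N$ together with the exact dimension count, since this is where the nested-set axioms are used in an essential combinatorial way — without them, the prescribed equalities could collapse $F_N$ to a lower-dimensional face or to the empty set.
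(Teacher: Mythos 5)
The paper offers no proof of this theorem---it is quoted verbatim from Postnikov \cite{Postnikov2}---so there is no internal argument to compare yours against; I will judge the proposal on its own terms. Your architecture is sound: establish the halfspace description of $\Nest({\bf y})$, show each $F_N$ is a nonempty face of the stated dimension, and then use the already-quoted Theorem~\ref{Thm:Feichtner} to upgrade an injection to a bijection by cardinality (this is legitimate within the paper's logical structure, since Theorem~\ref{Thm:Feichtner} is cited independently). Two of your supporting observations do check out: $z$ is indeed supermodular, since the defect $z_{I\cup J}+z_{I\cap J}-z_I-z_J$ equals $\sum y_K$ over those $K\in\mathcal B$ with $K\subseteq I\cup J$, $K\not\subseteq I$, $K\not\subseteq J$; and $N\cup\mathcal B_{\max}$ is laminar with linearly independent indicator vectors, because a dependence among indicators of a laminar family forces some member to be the disjoint union of two or more other members, which for $N\cup\mathcal B_{\max}$ is exactly what the second nested-set axiom forbids (the smaller members cannot lie in $\mathcal B_{\max}$, as they are proper subsets of a set in $\mathcal B$).

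The genuine gaps are the two steps you defer. First, the reverse inclusion in the halfspace description is the nontrivial half of Postnikov's theorem on Minkowski sums of coordinate simplices; ``induction on $|\mathcal B|$ or compare vertex sets'' is not an argument, and without it you do not yet know that $F_N$ is a face of $\Nest({\bf y})$ at all. (Your version of $F_N$, which includes the $\mathcal B_{\max}$ equalities, is the right one: as written, the paper's display \eqref{Eq:FN} with $N=\emptyset$ describes an unbounded set.) Second, and more importantly, everything rests on producing a point of $F_N$ at which $\sum_{i\in J}x_i>z_J$ for every $J\in\mathcal B\setminus(N\cup\mathcal B_{\max})$. That single construction simultaneously yields nonemptiness, the lower bound $\dim F_N\geq k+1-|\mathcal B_{\max}|-|N|$ (linear independence of the indicators gives only the upper bound), and injectivity, since it shows the tight set of $F_N$ is exactly $N\cup\mathcal B_{\max}$, so $N$ is read off from the face directly---your ``canonical laminar generating subfamily'' detour then becomes unnecessary. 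The ``greedy allocation'' is only named, not carried out, and this is precisely where the nested-set axioms earn their keep: for example, if $J=I_1\sqcup I_2\in\mathcal B$ with $I_1,I_2\in N$ disjoint, supermodularity forces $\sum_{i\in J}x_i=z_{I_1}+z_{I_2}<z_J$ on $F_N$, so $F_N=\emptyset$. Until the halfspace description and the interior-point construction are actually written out, this is a correct plan rather than a proof.
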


Recall that a polytope $P\subseteq\mathbb R^{k+1}$ is \dfn{integral} if all of its vertices are in $\mathbb Z^{k+1}$. The following corollary will allow us to phrase one of our main results in the next section more concisely. 

\begin{corollary}\label{Cor1}
Let $\mathcal B$ be a building set on $[k+1]$, and let ${\bf y}=(y_I)_{I\in\mathcal B}\in\mathbb R_{>0}^{\mathcal B}$. The nestohedron $\Nest({\bf y})$ is integral if and only if $y_I\in\mathbb Z$ for all $I\in\mathcal B$.  
\end{corollary}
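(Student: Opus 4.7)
My plan is to pivot the proof on the quantities $z_I = \sum_{J \in \mathcal B,\, J \subseteq I} y_J$ introduced just before Theorem~\ref{Thm:PostnikovFN}. On the poset $(\mathcal B,\subseteq)$, these are the down-sums of the $y$'s, so M\"obius inversion gives $y_I = \sum_{J \subseteq I} \mu_{\mathcal B}(J,I)\, z_J$. Because M\"obius functions of finite posets take integer values, integrality of the family $(z_I)_{I \in \mathcal B}$ is equivalent to integrality of $(y_I)_{I \in \mathcal B}$. It therefore suffices to prove that $\Nest(\mathbf y)$ is integral if and only if every $z_I$ lies in $\mathbb Z$.

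For the ``if'' direction, pick an arbitrary vertex $v$ of $\Nest(\mathbf y)$. By Theorem~\ref{Thm:PostnikovFN}, $v = F_N$ for a nested set $N$ with $|N| = k+1 - |\mathcal B_{\max}|$. The family $\mathcal L = N \cup \mathcal B_{\max}$ is laminar on $[k+1]$ and has cardinality $k+1$, so the ``strata'' $I' := I \setminus \bigcup \{J : J \text{ is a child of } I \text{ in } \mathcal L\}$ partition $[k+1]$. I claim $|I'| \geq 1$ for every $I \in \mathcal L$, which together with $\sum_I |I'| = k+1 = |\mathcal L|$ forces $|I'|=1$ throughout. Denoting the unique element of $I'$ by $i_I$, I would subtract the equalities $\sum_{j \in J} v_j = z_J$ over all children $J$ of $I$ from the equality $\sum_{j \in I} v_j = z_I$ (all guaranteed by \eqref{Eq:FN}) to obtain $v_{i_I} = z_I - \sum_{J} z_J \in \mathbb Z$, so $v \in \mathbb Z^{k+1}$.

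For the ``only if'' direction, I would show that for each $I \in \mathcal B$ some vertex of $\Nest(\mathbf y)$ witnesses $\sum_{j \in I} v_j = z_I$. If $I \in \mathcal B_{\max}$, then any $J \in \mathcal B$ with $J \cap I \neq \emptyset$ forces $I \cup J \in \mathcal B$, and maximality of $I$ then gives $J \subseteq I$; hence $\sum_{J \in \mathcal B} y_J \Delta_J$ lies entirely inside the hyperplane $\sum_{j \in I} x_j = z_I$, so every vertex does the job. If instead $I \in \mathcal B \setminus \mathcal B_{\max}$, then $\{I\}$ is trivially a nested set (both axioms are vacuous for a single-element family), which I can extend to a maximal nested set $N \ni I$; the corresponding vertex $v = F_N$ satisfies $\sum_{j \in I} v_j = z_I$ by \eqref{Eq:FN}. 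Integrality of $v$ then forces $z_I \in \mathbb Z$, and the M\"obius inversion of the first paragraph gives $y_I \in \mathbb Z$ for every $I \in \mathcal B$.

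The main obstacle is the combinatorial claim $|I'| \ge 1$, and this is exactly where the second defining axiom of a nested set is essential. If some $I \in \mathcal L$ had $I' = \emptyset$, then $I$ would equal the disjoint union of at least two children in $\mathcal L$ (a single child is impossible because children are strictly contained in their parent, and zero children would make $I$ empty). None of these children can belong to $\mathcal B_{\max}$ since they are properly contained in $I \in \mathcal B$, so they all belong to $N$. Their union is $I \in \mathcal B$, directly violating the axiom forbidding the union of two or more pairwise disjoint elements of $N$ from belonging to $\mathcal B$.
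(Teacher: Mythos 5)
Your proof is correct, but it takes a noticeably different and more labor-intensive route than the paper's. For the direction ``$y_I\in\mathbb Z$ for all $I$ implies integrality,'' the paper simply observes that $\Nest(\mathbf y)$ is a Minkowski sum of integral polytopes (the $y_I\Delta_I$), and vertices of a Minkowski sum are sums of vertices of the summands; your laminar-stratification argument in effect reproves this from scratch in the nestohedron setting, with the bonus of an explicit formula $v_{i_I}=z_I-\sum_J z_J$ for the vertex coordinates. For the converse, the paper picks $I$ minimal with $y_I\notin\mathbb Z$, deduces $z_I\notin\mathbb Z$, and exhibits a non-integral vertex on the face $F_{\{I\}}$; you instead prove that every $z_I$ is an integer and recover the $y_I$ by M\"obius inversion on $(\mathcal B,\subseteq)$ --- logically equivalent, and both arguments share the wrinkle that $\{I\}$ is only a legitimate nested set when $I\notin\mathcal B_{\max}$, which you handle explicitly via the hyperplane observation while the paper glosses over it. One small imprecision: in your ``if'' direction you attribute the equalities $\sum_{j\in M}v_j=z_M$ for $M\in\mathcal B_{\max}$ to \eqref{Eq:FN}, which only asserts equalities for sets in $N$; the needed fact is true (and is exactly the hyperplane observation you make in the following paragraph), but it is not literally part of \eqref{Eq:FN} and should be cited as such.
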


\begin{proof}
If $y_I\in\mathbb Z$ for all $I\in\mathcal B$, then $\Nest({\bf y})$ is integral because it is a Minkowski sum of integral polytopes. To prove the converse, suppose there exists $I\in\mathcal B$ with $y_I\not\in\mathbb Z$. We may assume $y_J\in\mathbb Z$ for all sets $J\in\mathcal B$ that are properly contained in $I$. Then $z_I\not\in\mathbb Z$. The set $N=\{I\}$ is clearly a nested set for $\mathcal B$, so, by Theorem~\ref{Thm:PostnikovFN}, it corresponds to a face $F_N$ of ${\bf P}({\bf y})$. Let $(v_1,\ldots,v_{k+1})$ be a vertex in $F_N$. It follows from \eqref{Eq:FN} that $\sum_{i\in I}v_i=z_I\not\in\mathbb Z$, so $\Nest({\bf y})$ is not integral.
\end{proof}

\begin{remark}
Let us define $\mathsf{IBN}$ to be the smallest (under containment) set of polytopes satisfying the following properties: 
\begin{itemize}
\item The $0$-dimensional polytope $\{(1)\}\subseteq \mathbb R$ is in $\mathsf{IBN}$. 
\item If $P\subseteq\mathbb R^{k+1}$ is a polytope in $\mathsf{IBN}$, then $P+\Delta_{[k+1]}$ is in $\mathsf{IBN}$. 
\item If $P,Q\in\mathsf{IBN}$, then $P\times Q\in\mathsf{IBN}$.
\end{itemize}
It is straightforward to check that $\mathsf{IBN}$ is precisely the set of integral binary nestohedra. 
\end{remark} 

We will also need the following result due to Postnikov concerning lattice points in Minkowski sums of coordinate simplices. In particular, this result describes the lattice points in an integral nestohedron. 

\begin{theorem}[\!\!{\cite[Proposition 14.12]{Postnikov2}}]\label{Thm:PostLattice}
If $I_1,\ldots,I_\ell$ are (not necessarily distinct) subsets of $[k+1]$, then \[\left(\sum_{j=1}^\ell\Delta_{I_j}\right)\cap\,\mathbb Z^{k+1}=\sum_{j=1}^\ell\{e_i:i\in I_j\}=\{e_{i_1}+\cdots+e_{i_\ell}:i_j\in I_j\text{ for all }j\in[\ell]\}.\] 
\end{theorem}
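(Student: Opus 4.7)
The plan is to handle the two equalities separately. The second equality, $\sum_{j=1}^\ell \{e_i : i \in I_j\} = \{e_{i_1}+\cdots+e_{i_\ell} : i_j \in I_j \text{ for all } j\in[\ell]\}$, is just the definition of the Minkowski sum of finite sets. For the first equality, the containment $\supseteq$ is immediate: each $e_i$ with $i \in I_j$ is a vertex of $\Delta_{I_j}$, so any point $e_{i_1}+\cdots+e_{i_\ell}$ with $i_j \in I_j$ lies in $\sum_j \Delta_{I_j}$ and is an integer vector. The heart of the argument is the reverse containment $\subseteq$.

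To prove $\subseteq$, fix a lattice point $p=(p_1,\ldots,p_{k+1}) \in \bigl(\sum_{j=1}^\ell \Delta_{I_j}\bigr) \cap \mathbb Z^{k+1}$. Each $\Delta_{I_j}$ lies in the hyperplane $\{x : x_1+\cdots+x_{k+1}=1\}$ and in the nonnegative orthant, so $p$ has nonnegative integer coordinates with $p_1+\cdots+p_{k+1}=\ell$. The goal is to produce indices $i_1,\ldots,i_\ell$ with $i_j \in I_j$ and $|\{j : i_j=i\}|=p_i$ for each $i \in [k+1]$. I would recast this as a perfect matching problem in the following bipartite graph $G$: the left vertex set is $[\ell]$; the right vertex set consists of $p_i$ ``copies'' of $i$ for each $i \in [k+1]$ (so $\ell$ vertices in total); and $j$ is adjacent to every copy of every $i \in I_j$. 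A perfect matching in $G$ yields the desired $i_j$'s, since the copy matched to $j$ determines $i_j \in I_j$ and the multiplicities on the right force $|\{j : i_j=i\}| = p_i$.

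To apply Hall's marriage theorem, I need to check Hall's condition for every $S \subseteq [\ell]$. Setting $U = \bigcup_{j \in S} I_j$, the neighborhood of $S$ in $G$ has size $\sum_{i \in U} p_i$. Writing $p = \sum_{j=1}^\ell p_j$ with $p_j \in \Delta_{I_j}$ and summing the $U$-coordinates gives $\sum_{i \in U} p_i = \sum_{j=1}^\ell \sum_{i \in U} (p_j)_i$. For each $j \in S$ we have $I_j \subseteq U$, so $\sum_{i \in U}(p_j)_i = \sum_{i \in I_j}(p_j)_i = 1$; the remaining summands are nonnegative. Therefore $\sum_{i \in U} p_i \geq |S|$, verifying Hall's condition.

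The main obstacle is spotting the bipartite matching structure hidden behind the statement; once $G$ is set up, the inequality needed for Hall's condition falls directly out of the defining property of the coordinate simplices $\Delta_{I_j}$ via the Minkowski-sum decomposition of $p$. The integrality hypothesis on $p$ enters only to justify that the multiplicities $p_i$ are nonnegative integers, which is exactly what is needed to form the right side of $G$ with the correct number of copies.
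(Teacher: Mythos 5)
Your argument is correct and complete. Note first that the paper itself does not prove this statement at all: it is imported verbatim from Postnikov (Proposition 14.12 of \emph{Permutohedra, associahedra, and beyond}), so there is no in-paper proof to compare against. What you have written is a self-contained, elementary proof of the cited result. The reduction of the containment $\subseteq$ to a perfect matching in the bipartite graph $G$ is the right move, and your verification of Hall's condition is exactly where the hypothesis $p_j\in\Delta_{I_j}$ does its work: for $j\in S$ the vector $p_j$ is supported on $I_j\subseteq U$ with coordinate sum $1$, so $\sum_{i\in U}p_i\ge|S|$. Since the right-hand vertex class of $G$ also has exactly $\ell$ vertices, the matching saturating $[\ell]$ guaranteed by Hall is automatically perfect, which gives the multiplicity condition $|\{j:i_j=i\}|=p_i$ and hence $p=\sum_j e_{i_j}$. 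This is in essence Rado's theorem on independent transversals, i.e., the standard proof that transversal polymatroids are the lattice points of their base polytopes -- which is consistent with the paper's subsequent remark that these lattice-point sets are transversal discrete polymatroids in the sense of Herzog--Hibi. Two cosmetic points only: the symbol $p_j$ is overloaded (coordinate of $p$ versus Minkowski summand), and you should flag that if some $I_j=\emptyset$ both sides are empty so the claim is vacuous; neither affects correctness.
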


In \cite{Murota}, Murota defined a nonempty set $A\subseteq\mathbb Z^{k+1}$ to be \dfn{M-convex} if for all vectors $a=(a_1,\ldots,a_{k+1})$ and $b=(b_1,\ldots,b_{k+1})$ in $A$ and every $i\in[k+1]$ such that $a_i>b_i$, there exists $j\in[k+1]$ with $a_j<b_j$ such that $a-e_i+e_j$ and $b+e_i-e_j$ are both in $A$. A \dfn{discrete polymatroid} (called a \dfn{discrete base polymatroid} in \cite{Herzog}) is a finite M-convex set contained in $\mathbb Z_{\geq 0}^{k+1}$. A polytope is called a \dfn{generalized permutohedron} if each of its edges is parallel to a vector of the form $e_i-e_j$ for $i\neq j$. It is known \cite{Herzog, LamPolypositroids} that a set $A\subseteq\mathbb Z_{\geq 0}^{k+1}$ is a discrete polymatroid if and only if $\conv(A)$ is a generalized permutohedron and $A=\conv(A)\cap\,\mathbb Z^{k+1}$. Herzog and Hibi \cite{Herzog} proved that if $I_1,\ldots,I_\ell$ are nonempty subsets of $[k+1]$, then the set $\{e_{i_1}+\cdots+e_{i_\ell}:i_j\in I_j\text{ for all }j\in[\ell]\}$ is a discrete polymatroid; a discrete polymatroid arising in this way is said to be \dfn{transversal}. According to Theorem~\ref{Thm:PostLattice}, the lattice points of an integral nestohedron form a transversal discrete polymatroid.

Let $f_i(P)$ denote the number of $i$-dimensional faces of a polytope $P$. If $P$ is $d$-dimensional, then $(f_0(P),\ldots,f_d(P))$ is called the \dfn{$f$-vector} of $P$. The \dfn{$f$-polynomial} of $P$ is $f_P(t)=\sum_{i=0}^df_i(P)t^i$. The \dfn{$h$-polynomial} of $P$ is the polynomial $h_P(t)$ defined by the identity $h_P(t)=f_P(t-1)$, and the \dfn{$h$-vector} of $P$ is the vector $(h_0(P),\ldots,h_d(P))$ such that $h_P(t)=\sum_{i=0}^dh_i(P)t^i$. We also make the convention $f_i(P)=h_i(P)=0$ for all $i>d$. 

Now suppose $\Nest({\bf y})$ is a binary nestohedron obtained from a binary building set $\mathcal B$ that, in turn, is obtained from a full binary plane tree $T$. In private communication with the author, Vincent Pilaud has explained how one can easily compute the $f$-vector and $h$-vector of $\Nest({\bf y})$ \cite{PilaudPrivate}. Let $T^{\text c}$ be the plane tree obtained from $T$ by contracting every edge in $T$ whose bottom vertex is not in $\mathcal B$. For each internal vertex $v$ of $T^{\text c}$, let $\deg(v)$ be the number of children of $v$. Then \[f_{\Nest({\bf y})}(t)=\prod_{v}\frac{(t+1)^{\deg(v)}-1}{t}\quad\text{and}\quad h_{\Nest({\bf y})}(t)=\prod_{v}\frac{t^{\deg(v)}-1}{t-1},\] where each product ranges over the set of internal vertices of $T^{\text c}$. For example, if $T$ and $\mathcal B$ are as illustrated in Figure~\ref{Fig1}, then $T^{\text c}$ is \[\begin{array}{l}\includegraphics[height=3.032cm]{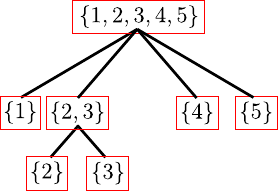}\end{array},\] so \[f_{\Nest({\bf y})}=\frac{(t+1)^4-1}{t}\frac{(t+1)^2-1}{t}=t^4+6t^3+14t^2+16t+8\] and \[h_{\Nest({\bf y})}=\frac{t^4-1}{t-1}\frac{t^2-1}{t-1}=t^4+2t^3+2t^2+2t+1.\]

\begin{remark}\label{Rem2}
Following \cite{Lam}, we say a polytope is \dfn{alcoved} if it is cut out by inequalities of the form $\sum_{i\in[a,b]}x_i\leq r_{a,b}$ for cyclic intervals $[a,b]$ and real numbers $r_{a,b}$. Following the recent article \cite{LamPolypositroids}, we say a polytope is a \dfn{polypositroid} if it is both a generalized permutohedron and an alcoved polytope. Every nestohedron is a generalized permutohedron \cite{Postnikov2}, and it follows from \eqref{Eq:FN} that every binary nestohedron is an alcoved polytope. Therefore, binary nestohedra are polypositroids.
\end{remark}

\section{Fertilitopes are Integral Binary Nestohedra}\label{Sec:Main}

In this section, we prove our main theorems connecting stack-sorting with polytopes and discrete polymatroids. Recall that $\mathcal V(\pi)$ denotes the set of valid compositions of a permutation $\pi$. If $\pi$ is a permutation of size $n$ with $k$ descents, then we can view the valid compositions of $\pi$ as vectors in $\mathbb R^{k+1}$ that lie in the hyperplane $\{(x_1,\ldots,x_{k+1})\in\mathbb R^{k+1}:x_1+\cdots+x_{k+1}=n-k\}$. Define the \dfn{fertilitope} of $\pi$, denoted $\Fer_\pi$, to be the convex hull of the set of valid compositions of $\pi$. That is, $\Fer_\pi=\conv(\mathcal V(\pi))$. 

\begin{theorem}\label{Thm:Main1}
If $\pi$ is a permutation of size $n$ with $k$ descents, then \[\mathcal V(\pi)=\Fer_\pi\cap\,\mathbb Z^{k+1}.\]
\end{theorem}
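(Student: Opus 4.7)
The inclusion $\mathcal V(\pi) \subseteq \Fer_\pi \cap \mathbb Z^{k+1}$ is immediate from the definition of $\Fer_\pi$ as the convex hull of $\mathcal V(\pi)$ and the fact that each valid composition is an integer vector. The real content is the reverse inclusion, and my plan is to identify $\mathcal V(\pi)$ explicitly as a \emph{transversal discrete polymatroid}. More precisely, I aim to produce a sequence of nonempty intervals $I_1, \ldots, I_{n-k} \subseteq [k+1]$, naturally attached to $\pi$, such that
\[
\mathcal V(\pi) = \bigl\{e_{i_1} + \cdots + e_{i_{n-k}} : i_t \in I_t \text{ for each } t \in [n-k]\bigr\}.
\]
Once this is in hand, Theorem~\ref{Thm:PostLattice} identifies the right-hand side with the lattice points of $\sum_t \Delta_{I_t}$. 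Since $\Fer_\pi = \conv(\mathcal V(\pi))$, it follows that $\Fer_\pi = \sum_t \Delta_{I_t}$ and $\mathcal V(\pi) = \Fer_\pi \cap \mathbb Z^{k+1}$.

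The intervals $I_t$ are built by induction on the number of descents $k$, indexed by the $n-k$ non-northeast-endpoint positions in the plot. The base case $k = 0$ is trivial: $\pi$ is increasing and $\mathcal V(\pi) = \{(n)\}$. For the inductive step, I focus on the hook $H_k$ whose southwest endpoint is fixed at the last descent top $(d_k, \pi_{d_k})$. Its northeast endpoint may be any of an explicit list of candidate points to the right, and fixing this endpoint reduces the problem to constructing valid hook configurations of a strictly smaller permutation (the entries under and around the chosen hook, viewed up to relative order). The inductive hypothesis then provides interval descriptions for that smaller problem; the new interval $I_t$ assigned to a point $(t, \pi_t)$ records the set of colors $\{0, 1, \ldots, k\}$ (or an initial subrange thereof) corresponding to the hierarchy of hooks and sky that could possibly cover that point as $H_k$ and the inner hooks vary.

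The main obstacle lies in verifying the \emph{independence} of these local color-choices: I must show that any assignment picking one color $i_t \in I_t$ for each point $t$ can be realized by a single valid hook configuration, and conversely that every valid hook configuration arises this way. This is where the combinatorial structure of valid hook configurations really has to be exploited. The core idea is that the way $\pi$ decomposes around its successive maxima — equivalent to the canonical-tree structure of Bousquet-M\'elou \cite{Bousquet} hinted at in the introduction — forces the option sets $I_t$ to be intervals arranged according to a binary tree, with inclusion corresponding to nesting of hooks. Checking that any cross-section of these nested intervals can be assembled into a legal configuration satisfying conditions \eqref{Item1}--\eqref{Item3} of Definition~\ref{Def5}, via the bijection between valid hook configurations and their induced compositions recorded in Remark~\ref{Rem6}, is the technical heart of the argument and is what will also feed directly into the proof of Theorem~\ref{Thm:Main2}.
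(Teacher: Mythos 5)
Your overall target is the right one --- exhibiting $\mathcal V(\pi)$ as a transversal discrete polymatroid $\sum_{t}\{e_i: i\in I_t\}$ for a multiset of intervals and then invoking Theorem~\ref{Thm:PostLattice} is essentially how the paper's argument concludes --- but as written the proposal has a genuine gap: the ``independence of the local color-choices,'' which you yourself identify as the technical heart, is exactly the content of the theorem and is nowhere established. It is not a routine verification. A priori, the set of colors achievable at a point $(t,\pi_t)$ and the set achievable at another point could be correlated (each realizable separately but not jointly), and ruling this out requires real structural input about how $\VHC(\pi)$ deforms as hooks move. The paper supplies that input through two nontrivial lemmas proved by induction on $n$: Lemma~\ref{LemFertilitopes1} and Lemma~\ref{Lem2}, which together show $\mathcal V(\pi\oplus 1)=\mathcal V(\pi)+\{e_1,\ldots,e_{k+1}\}$ via a delicate comparison of an arbitrary hook configuration of $\pi\oplus 1$ with the \emph{canonical} hook configuration of $\pi$ (two cases, depending on whether the relevant hook's northeast endpoint sits in the tail). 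Without an argument of this strength, the product description of $\mathcal V(\pi)$ is an unproven assertion.

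There is also a structural problem with the specific induction you propose. Fixing the last descent top and letting the northeast endpoint of $H_k$ range over its candidates decomposes $\mathcal V(\pi)$, via Proposition~\ref{Prop11}, as a \emph{union} $\bigcup_{H}\mathcal V(\pi_U^{H})\times\mathcal V(\pi_S^{H})$ over the choices of $H$, with each term confined to a different slice $q_k=\mathrm{const}$. A union of transversal polymatroids is not in general a transversal polymatroid, so showing that this union assembles into a single product of intervals is precisely as hard as the theorem itself; the inductive hypothesis on the smaller permutations $\pi_U^H$ (which differ as $H$ varies) does not obviously glue across the union. The paper avoids this by inducting on $n$ and peeling off the last \emph{entry} rather than the last hook: either $\pi=\pi'\oplus 1$ with $\mathcal V(\pi')\neq\emptyset$, in which case $\Fer_\pi=\Fer_{\pi'}+\Delta_{[k+1]}$ (Proposition~\ref{Prop1}), or the hook ending at $(n,n)$ is forced and $\mathcal V(\pi)$ is a genuine Cartesian product $\mathcal V(\pi_U^{H_d^*})\times\mathcal V(\pi_S^{H_d^*})$. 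Both operations manifestly preserve the interval-presented transversal structure, which is what makes the induction close. I would recommend reorganizing your argument around one of these two decompositions (or proving the Minkowski-sum lemma for $\pi\oplus 1$ directly) before the product claim can be considered justified.
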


\begin{theorem}\label{Thm:Main2}
A nonempty polytope is a fertilitope if and only if it is an integral binary nestohedron.  
\end{theorem}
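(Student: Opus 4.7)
I will prove both directions by structural induction, leveraging the recursive description of the class $\mathsf{IBN}$ of integral binary nestohedra given in the remark following Corollary~\ref{Cor1}. That remark characterizes $\mathsf{IBN}$ as the smallest collection of polytopes containing $\{(1)\} \subseteq \mathbb{R}$ and closed under (i) Minkowski sum with $\Delta_{[k+1]}$ (where $k+1$ is the ambient dimension) and (ii) Cartesian product. My plan is to define matching operations on sorted permutations so that these polytope constructions are realized as fertilitopes.

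The base case is the permutation $1 \in S_1$, which has a single (empty) valid hook configuration inducing the composition $(1)$, so $\Fer_1 = \{(1)\}$. For the Cartesian product, I will analyze the concatenation
\[
\pi_1 \diamond \pi_2 = (\pi_1 + |\pi_2|)\,\pi_2\,(|\pi_1|+|\pi_2|+1)
\]
of two sorted permutations, which places a shifted copy of $\pi_1$, then $\pi_2$, then the new global maximum. This introduces exactly one separating descent, and I will show that every valid hook configuration of $\pi_1 \diamond \pi_2$ restricts uniquely to valid hook configurations of the two factors, with induced composition equal to the concatenation of the factor compositions. Consequently $\mathcal{V}(\pi_1 \diamond \pi_2) = \mathcal{V}(\pi_1) \times \mathcal{V}(\pi_2)$, giving $\Fer_{\pi_1 \diamond \pi_2} = \Fer_{\pi_1} \times \Fer_{\pi_2}$ after taking convex hulls.

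For the Minkowski sum with $\Delta_{[k+1]}$, I will identify an operation $\pi \mapsto \pi^+$ on sorted permutations whose effect on valid compositions is $\mathcal{V}(\pi^+) = \{\mathbf{q} + e_i : \mathbf{q} \in \mathcal{V}(\pi),\, i \in [k+1]\}$. By Theorem~\ref{Thm:PostLattice}, this target set is precisely the lattice point set of $\Fer_\pi + \Delta_{[k+1]}$, so combining with Theorem~\ref{Thm:Main1} yields $\Fer_{\pi^+} = \Fer_\pi + \Delta_{[k+1]}$. With these correspondences in place, the reverse implication of the theorem (every integral binary nestohedron is a fertilitope) follows at once by induction on the $\mathsf{IBN}$-construction of the polytope, mirroring each rule by the corresponding permutation operation. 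For the forward implication, I will prove by induction on $|\pi|$ that every sorted permutation can itself be obtained from $1 \in S_1$ via iterated applications of $\diamond$ and $(\cdot)^+$. The inductive step distinguishes two cases: either $\pi$ admits a separating descent at which every valid hook configuration is forced to factor, letting us write $\pi = \pi_1 \diamond \pi_2$ for smaller sorted $\pi_1, \pi_2$; or else there is a distinguished entry of $\pi$ whose removal produces a smaller sorted permutation $\pi'$ with $\pi = (\pi')^+$; locating this structure cleanly will use the canonical tree of $\pi$.

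The main obstacle will be the Minkowski-sum step: pinning down the correct operation $\pi^+$ and verifying the identity $\mathcal{V}(\pi^+) = \{\mathbf{q} + e_i : \mathbf{q} \in \mathcal{V}(\pi),\, i \in [k+1]\}$. The leverage comes from Theorem~\ref{Thm:Main1}, which reduces this identity to a comparison of lattice point sets; combined with Theorem~\ref{Thm:PostLattice}, it avoids direct combinatorial bookkeeping about hooks and induced color classes. A secondary obstacle is the case analysis in the forward induction, namely the structural statement that every sorted permutation is reachable from $1 \in S_1$ via iterated $\diamond$ and $(\cdot)^+$; here the canonical tree should supply the required decomposition, exposing either a separating descent or a removable entry.
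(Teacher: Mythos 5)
Your overall architecture matches the paper's: the base case $1\in S_1$, the Cartesian-product step via the concatenation $(\tau+n')\,\tau'\,(n+n'+1)$ (which the paper handles with the $H$-splitting map of Proposition~\ref{Prop11}), and a Minkowski-sum step via an operation appending a new maximum, with the forward direction obtained by an induction that branches on whether a truncation of $\pi$ is itself sorted. The problem is the step you yourself flag as the main obstacle: the identity $\mathcal V(\pi^+)=\mathcal V(\pi)+\{e_1,\ldots,e_{k+1}\}$. Your claim that Theorem~\ref{Thm:Main1} together with Theorem~\ref{Thm:PostLattice} lets you avoid ``direct combinatorial bookkeeping about hooks'' does not hold up. First, Theorem~\ref{Thm:Main1} is not available as a black box here: in the paper it is proved simultaneously with Theorem~\ref{Thm:Main2}, and its proof runs through Proposition~\ref{Prop1}, which rests on exactly the identity you are trying to establish (Lemma~\ref{Lem2}). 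Second, even granting Theorem~\ref{Thm:Main1}, it only asserts $\mathcal V(\sigma)=\Fer_\sigma\cap\,\mathbb Z^{k+1}$ for each fixed $\sigma$; it says nothing about how $\mathcal V$ transforms under $\pi\mapsto\pi\oplus 1$. Since $\Fer_{\pi\oplus 1}$ is by definition the convex hull of $\mathcal V(\pi\oplus 1)$, ``comparing lattice point sets'' presupposes knowledge of the very set you are computing; the proposed reduction is circular.

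The missing content is the paper's Lemmas~\ref{LemFertilitopes1} and \ref{Lem2}, which prove the containments $\mathcal V(\pi\oplus 1)\subseteq\mathcal V(\pi)+\{e_1,\ldots,e_{k+1}\}$ and $\supseteq$ by honest induction on hooks: one locates the descent top of height $n-\tl(\pi)$, whose hook must end in the tail, and applies the splitting map together with a case analysis comparing the given hook configuration to the canonical one. This is the technical core of the theorem and cannot be outsourced to polytope generalities. Two smaller points: you never actually specify $\pi^+$ (it should be $\pi\oplus 1$, and one must first note that sorted permutations end in their maximum so the forward induction can always peel off the last entry); and in the product case of the forward direction, $\pi$ need not literally equal $\pi_1\diamond\pi_2$ --- what one shows is $\mathcal V(\pi)=\mathcal V\bigl(\pi_U^{H}\bigr)\times\mathcal V\bigl(\pi_S^{H}\bigr)$ for the hook $H$ forced to end at $(n,n)$, the two factors merely having the same relative order as smaller sorted permutations.
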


Before proving these theorems, let us discuss their consequences. First, these theorems allow us to view the Refined Tree Fertility Formula (Theorem~\ref{RTFF}) as a sum over lattice points in a polytope; by the discussion following Theorem~\ref{Thm:PostLattice}, this is actually a sum over a transversal discrete polymatroid. In particular, the Refined Fertility Formula (Corollary~\ref{RFF'}) can be rewritten as \[\sum_{\sigma\in s^{-1}(\pi)}x^{\des(\sigma)+1}=\sum_{{\bf q}\in\Fer_\pi\cap\,\mathbb Z^{k+1}}N_{{\bf q}}(x),\] where $\pi$ is a permutation with $k$ descents. Hence, Theorems~\ref{Thm:Main1} and \ref{Thm:Main2} have the following corollary concerning the conjectures stated at the end of Section~\ref{Sec:Intro}. 
\begin{corollary}\label{Cor:Fertilitopes1}
Conjectures~\ref{Conj2} and \ref{Conj:Fertilitopes1} are equivalent. 
\end{corollary}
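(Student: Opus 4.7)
The plan is to chain together the Refined Fertility Formula (Corollary~\ref{RFF'}), Theorem~\ref{Thm:Main1}, and Theorem~\ref{Thm:Main2} to show that the two families of polynomials appearing in the conjectures are literally the same family. First I would rewrite the Refined Fertility Formula using Theorem~\ref{Thm:Main1}: for every permutation $\pi$ with $k$ descents,
\[
\sum_{\sigma\in s^{-1}(\pi)}x^{\des(\sigma)+1}=\sum_{{\bf q}\in\mathcal V(\pi)}N_{\bf q}(x)=\sum_{{\bf q}\in \Fer_\pi\cap\,\mathbb Z^{k+1}}N_{\bf q}(x).
\]
Thus every nonzero polynomial appearing in Conjecture~\ref{Conj2} has the shape appearing in Conjecture~\ref{Conj:Fertilitopes1}, with $P=\Fer_\pi$ an integral binary nestohedron (by Theorem~\ref{Thm:Main2}). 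This immediately gives the implication from Conjecture~\ref{Conj:Fertilitopes1} to Conjecture~\ref{Conj2}.

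For the reverse implication, I would take an arbitrary integral binary nestohedron $P\subseteq\mathbb R^{k+1}$. By the ``only if'' direction of Theorem~\ref{Thm:Main2}, there exists a permutation $\pi$ with $\Fer_\pi=P$. Combining with the identity above yields
\[
\sum_{{\bf q}\in P\cap\,\mathbb Z^{k+1}}N_{\bf q}(x)=\sum_{\sigma\in s^{-1}(\pi)}x^{\des(\sigma)+1},
\]
so real-rootedness of the left-hand side (the statement of Conjecture~\ref{Conj:Fertilitopes1}) follows from real-rootedness of the right-hand side (the statement of Conjecture~\ref{Conj2}).

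The only remaining nuance is the degenerate case when $\pi$ is not sorted, so that $\mathcal V(\pi)=\emptyset$ and the fertilitope is empty; then both sides of the Refined Fertility Formula are the zero polynomial and fall outside the statement of Conjecture~\ref{Conj:Fertilitopes1} (which assumes $P$ is a nonempty nestohedron), so this case contributes nothing to either conjecture and can be dispatched in a single sentence. There is no genuine obstacle here: once Theorems~\ref{Thm:Main1} and~\ref{Thm:Main2} are in hand, the corollary is a direct translation between the permutation-indexed and nestohedron-indexed families of polynomials, and the real work lies entirely in those two theorems.
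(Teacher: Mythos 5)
Your proposal is correct and follows exactly the paper's route: the paper likewise combines the Refined Fertility Formula with Theorem~\ref{Thm:Main1} to rewrite $\sum_{\sigma\in s^{-1}(\pi)}x^{\des(\sigma)+1}$ as $\sum_{{\bf q}\in\Fer_\pi\cap\,\mathbb Z^{k+1}}N_{\bf q}(x)$ and then invokes Theorem~\ref{Thm:Main2} to identify the two families of polynomials, with the unsorted case contributing only the zero polynomial. (Your only slip is cosmetic: producing a permutation $\pi$ with $\Fer_\pi=P$ for a given integral binary nestohedron $P$ uses the ``if'' direction of Theorem~\ref{Thm:Main2}, i.e.\ Lemma~\ref{Lem:Fertilitopes3}, not the ``only if'' direction.)
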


Specializing further to the Fertility Formula (Corollary~\ref{FF}), we find that \[|s^{-1}(\pi)|=\sum_{{\bf q}\in\Fer_\pi\cap\,\mathbb Z^{k+1}}C_{{\bf q}}.\] This reinterpretation of the Fertility Formula will be crucial when we derive new results about fertility numbers in Section~\ref{Sec:FertilityNumbers}. Indeed, the fact that $\Fer_\pi\cap\,\mathbb Z^{k+1}$ is a discrete polymatroid will be especially helpful in proving that certain numbers are infertility numbers. 

Let us also remark that the VHC Cumulant Formula can now be rewritten as a sum over many transversal discrete polymatroids (possibly with repetitions). Namely, if $(\kappa_n)_{n\geq 1}$ is a sequence of free cumulants, then Theorem~\ref{VHCCF} implies that the corresponding classical cumulants are given by \[-c_n=\sum_{\pi\in S_{n-1}}\sum_{{\bf q}\in\Fer_\pi\cap\,\mathbb Z^{k+1}}(-\kappa_{\bullet+1})_{\bf q}.\] 

Once we have proven Theorem~\ref{Thm:Main2}, it will follow that fertilitopes enjoy all of the nice properties of binary nestohedra discussed in Section~\ref{Subsec:BinaryNestohedra}; this is the primary reason why we listed those properties in the first place. 

\medskip 

We will prove Theorems~\ref{Thm:Main1} and \ref{Thm:Main2} simultaneously by induction. We begin by analyzing the combinatorics of valid hook configurations. Let $\pi=\pi_1\cdots\pi_n$ be a permutation, and let $H$ be a hook of $\pi$ with southwest endpoint $(i,\pi_i)$ and northeast endpoint $(j,\pi_j)$. The \dfn{$H$-unsheltered subpermutation of $\pi$} is the permutation $\pi_U^H=\pi_1\cdots\pi_i\pi_{j+1}\cdots\pi_n$. Similarly, the \dfn{$H$-sheltered subpermutation of $\pi$} is $\pi_S^H=\pi_{i+1}\cdots\pi_{j-1}$. For instance, if $\pi=426315789$ and $H$ is the hook shown in Figure~\ref{Fig4}, then $\pi_U^H=4269$ and $\pi_S^H=3157$. In all applications, the plot of the subpermutation $\pi_S^H$ will lie completely beneath $H$ in the plot of $\pi$ (it will be ``sheltered'' by $H$). 

Now let $\VHC^H(\pi)$ denote the set of valid hook configurations of $\pi$ that include the hook $H$. There are natural maps \[\varphi_U^H:\VHC^H(\pi)\to\VHC(\pi_U^H)\quad\text{and}\quad\varphi_S^H:\VHC^H(\pi)\to\VHC(\pi_S^H).\] Indeed, we obtain $\varphi_U^H(\mathcal H)$ (respectively, $\varphi_S^H(\mathcal H)$) by simply deleting $H$, the northeast endpoint of $H$, and the points and hooks in the plot of $\pi_S^H$ (respectively, $\pi_U^H$) from $\mathcal H$. Figure~\ref{Fig8} provides an example. We define the
\dfn{$H$-splitting map} \[\varphi^H:\VHC^H(\pi)\to\VHC(\pi_U^H)\times\VHC(\pi_S^H)\] by $\varphi^H(\mathcal H)=(\varphi_U^H(\mathcal H),\varphi_S^H(\mathcal H))$. 

\begin{figure}[h]
\begin{center}
\includegraphics[height=7.233cm]{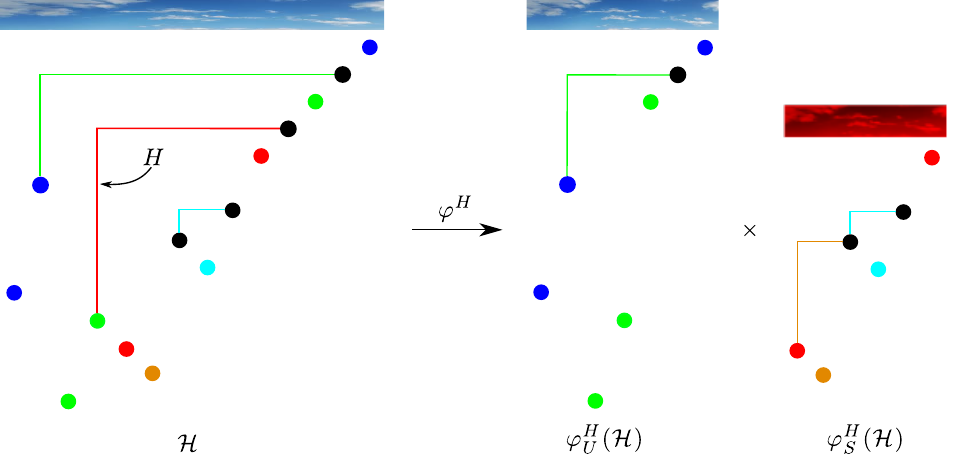}
\caption{An illustration of the maps $\varphi_U^H$, $\varphi_S^H$, and $\varphi^H$ from Proposition~\ref{Prop11}. Notice that the hook $H$ in $\mathcal H$ ``becomes'' the sky in $\varphi_S^H(\mathcal H)$; this is why we colored that sky red instead of the usual color blue. The valid composition induced by $\mathcal H$ is $(3,3,2,1,1)$, and the valid compositions induced by $\varphi_U^H(\mathcal H)$ and $\varphi_S^H(\mathcal H)$ are $(3,3)$ and $(2,1,1)$, respectively.}
\label{Fig8}
\end{center}  
\end{figure}

\begin{proposition}\label{Prop11}
Let $\pi=\pi_1\cdots\pi_n$ be a permutation with descents $d_1<\cdots<d_k$. Let $H$ be a hook of $\pi$ with southwest endpoint $(d_i,\pi_{d_i})$ and northeast endpoint $(j,\pi_j)$, and assume $j>d_k$. The $H$-splitting map $\varphi^H:\VHC^H(\pi)\to\VHC(\pi_U^H)\times\VHC(\pi_S^H)$ is a bijection. If $\mathcal H\in\VHC^H(\pi)$ induces the valid composition $(q_0,\ldots,q_k)$, then the valid compositions induced by $\varphi_U^H(\mathcal H)$ and $\varphi_S^H(\mathcal H)$ are $(q_0,\ldots,q_{i-1})$ and $(q_i,\ldots,q_k)$, respectively. 
\end{proposition}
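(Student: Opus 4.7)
The plan is to reduce the bijection to a partition lemma: every hook $H_\ell$ of $\mathcal H$ with $\ell\neq i$ \emph{belongs to} one of the two regions carved out by $H$. Precisely, for $\ell<i$ both endpoints of $H_\ell$ occupy columns in $\{1,\ldots,d_i\}\cup\{j+1,\ldots,n\}$ (the columns of $\pi_U^H$), while for $\ell>i$ both endpoints occupy columns in $\{d_i+1,\ldots,j-1\}$ (the columns of $\pi_S^H$). Granted this, $\varphi_U^H$ and $\varphi_S^H$ are defined by the obvious relabeling, and all three VHC axioms for each piece are inherited from those of $\mathcal H$; the inverse map reassembles a pair $(\mathcal H_U,\mathcal H_S)$ by shifting the hooks of $\mathcal H_S$ back into the sheltered columns and reinserting $H$.

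The main obstacle is the partition lemma, which requires a short but careful case analysis. For $\ell>i$, the southwest endpoint $(d_\ell,\pi_{d_\ell})$ satisfies $d_i<d_\ell\leq d_k<j$ (using $j>d_k$), and condition~\ref{Item2} forces $\pi_{d_\ell}<\pi_j$. I would then show that the northeast endpoint $(m_\ell,\pi_{m_\ell})$ also lies in the sheltered columns: if $m_\ell=j$ then $H_\ell$ and $H$ share the northeast endpoint $(j,\pi_j)$, which condition~\ref{Item3} forbids, while $m_\ell>j$ leads in both subcases $\pi_{m_\ell}>\pi_j$ and $\pi_{m_\ell}<\pi_j$ to a violation of condition~\ref{Item2}---either $(m_\ell,\pi_{m_\ell})$ sits above the horizontal segment of $H$, or the point $(j,\pi_j)$ sits above the horizontal segment of $H_\ell$. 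For $\ell<i$, the southwest endpoint has $d_\ell<d_i$, and the dual analysis rules out the northeast endpoint landing in the open sheltered columns $(d_i,j)$: the column $m_\ell=j$ duplicates a northeast endpoint, while $m_\ell\in(d_i,j)$ always forces either a crossing with $H$ or a point lying directly above a hook, depending on the relative positions of $\pi_{m_\ell}$, $\pi_{d_i}$, and $\pi_j$.

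With the partition lemma in hand, well-definedness and bijectivity follow cleanly. The descents of $\pi_U^H$ are precisely $d_1,\ldots,d_{i-1}$: the pairs $(\pi_c,\pi_{c+1})$ with $c<d_i$ are unchanged, and the new junction between $\pi_{d_i}$ and $\pi_{j+1}$ (when $j<n$) is not a descent because $j>d_k$ is not a descent of $\pi$, giving $\pi_{d_i}<\pi_j<\pi_{j+1}$. A similar check (using condition~\ref{Item2} of $\mathcal H$ to force $d_k\leq j-2$) shows that the descents of $\pi_S^H$ are precisely $d_{i+1}-d_i,\ldots,d_k-d_i$. The hooks of $\varphi_U^H(\mathcal H)$ and $\varphi_S^H(\mathcal H)$ are then anchored at exactly the right descent tops, and the remaining VHC conditions restrict to each region because conditions~\ref{Item2} and~\ref{Item3} can only become easier to satisfy after deleting points and hooks. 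The inverse is the reassembly described above; non-crossing across the two regions is automatic because sheltered hooks stay strictly inside the ``box'' under $H$, while unsheltered hooks either remain in unsheltered columns entirely or pass horizontally above $H$.

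Finally, for the induced compositions, the coloring of the plot described in Section~\ref{Subsec:VHCs} partitions cleanly: an unsheltered point (at column $<d_i$ or $>j$) does not have $H$ directly above it, so it cannot be colored like $H$, while a sheltered point, looking upward, first meets either $H$ or some sheltered hook $H_\ell$ with $\ell>i$, so it is never sky-colored and never colored like any $H_\ell$ with $\ell<i$. Consequently, the $q_0$ sky-colored points and the $q_\ell$ points colored like $H_\ell$ for $\ell<i$ all lie in $\pi_U^H$, while the $q_i$ points colored like $H$ and the $q_\ell$ points colored like $H_\ell$ for $\ell>i$ all lie in $\pi_S^H$. The induced coloring of $\varphi_U^H(\mathcal H)$ agrees with the restriction of the coloring of $\mathcal H$, yielding the composition $(q_0,\ldots,q_{i-1})$. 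In $\varphi_S^H(\mathcal H)$, deleting $H$ uncovers the sky directly above the points previously colored with $H$, so those $q_i$ points become sky-colored while the other sheltered blocks persist unchanged, yielding the composition $(q_i,q_{i+1},\ldots,q_k)$ as claimed.
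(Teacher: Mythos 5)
Your proof is correct and follows essentially the same route as the paper: split along $H$, reassemble for the inverse, and use the hypothesis $j>d_k$ to guarantee that every point to the right of $(j,\pi_j)$ is higher than $(j,\pi_j)$, so nothing from the unsheltered part can interfere with the sheltered box under $H$; you simply make explicit the partition lemma and the coloring bookkeeping that the paper treats as immediate from the definitions. One small slip: in the subcase $\ell>i$, $m_\ell>j$, $\pi_{m_\ell}>\pi_j$, the point $(m_\ell,\pi_{m_\ell})$ lies in a column outside the horizontal segment of $H$, so the contradiction is not Condition~\ref{Item2} but Condition~\ref{Item3} of Definition~\ref{Def5}: since $\pi_{d_\ell}<\pi_j<\pi_{m_\ell}$ and $d_i<d_\ell<j$, the vertical segment of $H_\ell$ crosses the horizontal segment of $H$.
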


\begin{proof}
The valid hook configuration $\mathcal H$ can be reconstructed from $\varphi_U^H(\mathcal H)$ and $\varphi_S^H(\mathcal H)$ by piecing these two smaller valid hook configurations together and then adding back the hook $H$ and the northeast endpoint of $H$. To be more precise, let us define $\widetilde\varphi^H:\VHC(\pi_U^H)\times\VHC(\pi_S^H)\to\VHC^H(\pi)$ by declaring the hooks of $\widetilde\varphi^H(\mathcal H_U,\mathcal H_S)$ to be the hooks of $\mathcal H_U$, the hooks of $\mathcal H_S$, and $H$. Here, we are naturally identifying hooks of $\mathcal H_U$ and $\mathcal H_S$ with the hooks of $\pi$ whose endpoints have the same heights. In order to see that $\widetilde\varphi^H$ is the inverse of $\varphi^H$, the only thing we need to check is that $\widetilde\varphi^H$ is well-defined. In other words, we must verify that for all $\mathcal H_U\in\VHC(\pi_U^H)$ and $\mathcal H_S\in\VHC(\pi_S^H)$, the configuration of hooks $\widetilde\varphi^H(\mathcal H_U,\mathcal H_S)$ is a genuine valid hook configuration of $\pi$. It is immediate that the southwest endpoints of the hooks in $\widetilde\varphi^H(\mathcal H_U,\mathcal H_S)$ are the descent tops of the plot of $\pi$. To prove that $\widetilde\varphi^H(\mathcal H_U,\mathcal H_S)$ satisfies  Conditions \ref{Item2} and \ref{Item3} in Definition~\ref{Def5}, the only thing we need to check is that none of the hooks in $\widetilde\varphi^H(\mathcal H_U,\mathcal H_S)$ coming from $\mathcal H_U$ pass underneath points in the plot of $\pi_S^H$, pass underneath the northeast endpoint of $H$, cross hooks coming from $\mathcal H_S$, or cross the hook $H$. This follows from the hypothesis that $j>d_k$. Indeed, this hypothesis guarantees that all of the points in the plot of $\pi$ to the right of the northeast endpoint of $H$ are higher than the northeast endpoint of $H$. The last statement of the proposition concerning the induced valid compositions is immediate from the definition of $\varphi^H$.   
\end{proof}

Let $\pi$ be a permutation whose descents are $d_1<\cdots<d_k$. If $\VHC(\pi)$ is nonempty, then there is a distinguished valid hook configuration $\mathcal H^*=(H_1^*,\ldots,H_k^*)$ called the \dfn{canonical hook configuration} of $\pi$. This is the valid hook configuration of $\pi$ obtained by choosing the northeast endpoints of all of the hooks to be as low as possible. To be more precise, let us say a point in the plot of $\pi$ lies \dfn{weakly below} a hook $H$ if it lies below $H$ or is the northeast endpoint of $H$ (the southwest endpoint of $H$ \emph{does not} lie weakly below $H$). We construct the hooks in the canonical hook configuration in the order $H_k^*,\ldots,H_1^*$. Choose $H_\ell^*$ to be the hook with southwest endpoint $(d_\ell,\pi_{d_\ell})$ whose northeast endpoint is the lowest point in the plot of $\pi$ that lies above and to the right of $(d_\ell,\pi_{d_\ell})$ and does not lie weakly below any of the hooks $H_k^*,\ldots,H_{\ell+1}^*$ that have already been constructed. If at any time the hook $H_\ell^*$ does not exist, then $\pi$ does not have a canonical hook configuration. It is not difficult to show that every permutation that has a valid hook configuration must have a canonical hook configuration. 

We need just a little more notation before stating our first key lemma. Recall that for $\pi=\pi_1\cdots\pi_n\in S_n$, we let $\pi\oplus 1$ be the concatenation $\pi(n+1)\in S_{n+1}$. The \dfn{tail length} of $\pi$, denoted $\tl(\pi)$, is the largest integer $\ell\in\{0,\ldots,n\}$ such that $\pi_i=i$ for all $i\in\{n-\ell+1,\ldots,n\}$. For example, $\tl(13245)=2$, $\tl(13254)=0$, and $\tl(12345)=5$. The \dfn{tail} of $\pi$ is the set of points $\{(n-\tl(\pi)+1,n-\tl(\pi)+1),\ldots,(n,n)\}$. Recall that a permutation is  sorted (i.e., in the image of $s$) if and only if $\mathcal V(\pi)\neq\emptyset$. 

\begin{lemma}\label{LemFertilitopes1}
If $\pi\in S_n$ is a sorted permutation with $k$ descents, then \[\mathcal V(\pi\oplus 1)\subseteq \mathcal V(\pi)+\{e_1,\ldots,e_{k+1}\}.\]
\end{lemma}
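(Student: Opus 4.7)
My plan is to argue by a case analysis on the role of the point $(n+1,n+1)$ in the VHC $\widetilde{\mathcal H}$ that realizes, via Remark~\ref{Rem6}, a given valid composition $\widetilde{\mathbf q}=(\tilde q_0,\ldots,\tilde q_k)\in\mathcal V(\pi\oplus 1)$. Note that appending the maximum $n+1$ creates no new descent, so $\pi\oplus 1$ has the same $k$ descents as $\pi$ and $\widetilde{\mathcal H}$ has exactly $k$ hooks.

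If the point $(n+1,n+1)$ is not a northeast endpoint of any hook of $\widetilde{\mathcal H}$ (Case A), then it is assigned some color in the induced coloring, say the one corresponding to coordinate $j\in[k+1]$. Every hook of $\widetilde{\mathcal H}$ already has its NE endpoint in the plot of $\pi$, so it is a valid hook of $\pi$; removing $(n+1,n+1)$ while keeping all hooks intact yields a configuration whose conditions (1)--(3) of Definition~\ref{Def5} are inherited unchanged, and is therefore a VHC $\mathcal H$ of $\pi$. The induced coloring of $\mathcal H$ agrees with that of $\widetilde{\mathcal H}$ on the plot of $\pi$, so the composition of $\mathcal H$ is $\widetilde{\mathbf q}-e_j$.

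If instead $(n+1,n+1)$ is the NE endpoint of some hook $H_i$ of $\widetilde{\mathcal H}$ (Case B), I first apply Proposition~\ref{Prop11} with $H=H_i$ (whose hypothesis $j>d_k$ holds since the NE position is $n+1$) to decompose $\widetilde{\mathcal H}$ into sub-VHCs $\mathcal H_U'\in\VHC(\pi_1\cdots\pi_{d_i})$ and $\mathcal H_S'\in\VHC(\pi_{d_i+1}\cdots\pi_n)$, inducing $(\tilde q_0,\ldots,\tilde q_{i-1})$ and $(\tilde q_i,\ldots,\tilde q_k)$ respectively; in particular, both subpermutations are sorted. The strategy is to produce a VHC of $\pi$ by introducing a shortened hook $H_i^{\mathrm{new}}$ at $(d_i,\pi_{d_i})$ with NE endpoint at a suitable point $(m,\pi_m)\in\pi$, keeping the other hooks of $\widetilde{\mathcal H}$ unchanged and deleting $(n+1,n+1)$. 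The natural choice for $(m,\pi_m)$ is the highest point of $\pi$ in the sheltered region of $H_i$ that lies above $\pi_{d_i}$ and is not itself the NE endpoint of any hook of $\widetilde{\mathcal H}$; when this choice is available, a direct geometric argument using the nested structure of $H_{i+1},\ldots,H_k$ inside $H_i$ shows that $H_i^{\mathrm{new}}$ respects condition~(3), that $(m,\pi_m)$ was colored $H_i$ in $\widetilde{\mathcal H}$, and that the resulting VHC of $\pi$ has composition $\widetilde{\mathbf q}-e_{i+1}$.

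The main obstacle is that such an ``unoccupied'' high point may fail to exist: it can happen that every point of $\pi$ above $\pi_{d_i}$ in the sheltered region of $H_i$ is already the NE endpoint of some nested hook $H_j$ with $j>i$, so that no direct replacement for $H_i$ is available. In this situation one must first retract a nested hook to free up a point for $H_i^{\mathrm{new}}$. Since the sheltered subpermutation $\pi_{d_i+1}\cdots\pi_n$ is sorted and ends in its maximum $\pi_n=n$, one may view it (after standardization) as $\sigma\oplus 1$ for some permutation $\sigma$ with $k-i<k$ descents and apply the lemma recursively --- inducting on the number of descents, or equivalently on the depth of the nested hooks whose NE endpoints are forced up to the maximum of each sub-region. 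Iterating this retraction eventually reduces to the easy sub-case, and tracking the composition change shows that in every case there is some $j\in[k+1]$ for which $\widetilde{\mathbf q}-e_j\in\mathcal V(\pi)$, yielding the lemma.
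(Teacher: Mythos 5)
Your Case A is correct: if $(n+1,n+1)$ is not a northeast endpoint, it is the highest point and hence sky-colored, and deleting it yields a valid hook configuration of $\pi$ whose composition is $\widetilde{\bf q}-e_1$. The gap is in Case B, and it is twofold. First, the ``direct'' construction is only asserted: for the shortened hook $H_i^{\mathrm{new}}$ ending at the highest non-northeast-endpoint $(m,\pi_m)$, you must check that no point lies above its horizontal part, that it does not cross nested hooks whose northeast endpoints lie in columns greater than $m$, and---crucially---that no point in columns $m+1,\ldots,n$ was colored $H_i$ (otherwise such points get recolored by the sky or an outer hook and the composition changes in \emph{two} coordinates, not by a single $e_j$). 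None of this is verified. Second, and fatally, the recursive fallback invokes the lemma for $\sigma$, where only $\sigma\oplus 1$ (the standardization of the sheltered subpermutation $\pi_{d_i+1}\cdots\pi_n$) is known to be sorted. Sortedness of $\sigma\oplus 1$ does not imply sortedness of $\sigma$, and when $\mathcal V(\sigma)=\emptyset$ the inductive inclusion $\mathcal V(\sigma\oplus 1)\subseteq\mathcal V(\sigma)+\{e_1,\ldots\}$ is simply false. Concretely, take $\pi=132546$ (sorted, descents $d_1=2$, $d_2=4$) and the valid hook configuration of $\pi\oplus 1=1325467$ with hooks $(2,3)\to(7,7)$ and $(4,5)\to(6,6)$, inducing $(2,2,1)$. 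The hook ending at $(7,7)$ is the one from $(2,3)$, its sheltered subpermutation is $2546\cong 1324=\sigma\oplus 1$ with $\sigma\cong 132$, and $132$ is not sorted; so your recursion has nothing to recurse on. (Here your direct construction happens to succeed by choosing $(4,5)$, but that point is the southwest endpoint of a nested hook rather than an ``unoccupied'' point, which is exactly the kind of situation your sketch does not analyze.)

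This is precisely the difficulty the paper's proof is engineered to avoid. Rather than keying on whichever hook happens to end at $(n+1,n+1)$, it keys on the hook $H_m$ emanating from the \emph{highest} descent top $(d_m,\pi_{d_m})$ with $\pi_{d_m}=n-\tl(\pi)$, whose northeast endpoint is forced to lie in the tail; it compares that endpoint with the canonical one, and in the nontrivial case it first builds an explicit valid hook configuration $\widetilde{\mathcal H}$ of $\pi$ by sliding every tail-ending hook down one step along the diagonal of tail points. The existence of $\widetilde{\mathcal H}$ certifies that the relevant sheltered subpermutation of $\pi$ \emph{is} sorted before the induction hypothesis (on $n$, not on the number of descents) is applied, and only then does it split and reassemble via Proposition~\ref{Prop11}. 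To repair your argument you would need either a complete verification that a suitable replacement endpoint for $H_i$ always exists in $\pi$ itself (which your counterexample shows cannot be found by avoiding northeast endpoints alone), or a mechanism like the paper's tail-shifting that guarantees sortedness of the subpermutation you recurse on.
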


\begin{proof}
If $\pi=\id_n$, then $k=0$, $\mathcal V(\pi)=\{(n)\}$, and $\mathcal V(\pi\oplus 1)=\mathcal V(\id_{n+1})=\{(n+1)\}=\{(n)\}+\{e_1\}$. This completes the proof when $n=\id_n$; in particular, this finishes the proof when $n=1$. Thus, we may assume $\pi\neq\id_n$ and $n\geq 2$ and proceed by induction on $n$. Let $d_1<\cdots <d_k$ be the descents of $\pi$; observe that these are also the descents of $\pi\oplus 1$. By the definition of the tail length, there is an index $m\in[k]$ such that $\pi_{d_m}=n-\tl(\pi)$. 

Since $\mathcal V(\pi)\neq\emptyset$, the permutation $\pi$ must have a canonical hook configuration $\mathcal H^*=(H_1^*,\ldots,H_k^*)$. By the definition of the index $m$, we know that $H_m^*$ (which has southwest endpoint $(d_m,\pi_{d_m})$) has a northeast endpoint that lies in the tail of $\pi$. This means that the northeast endpoint of $H_m^*$ is $(b,b)$ for some integer $b$ with $n-\tl(\pi)+1\leq b\leq n$. 

Choose a valid hook configuration $\mathcal H=(H_1,\ldots,H_k)\in\VHC(\pi\oplus 1)$, and let ${\bf q}^{\mathcal H}=(q_0,\ldots,q_k)$ be the valid composition of $\pi\oplus 1$ that $\mathcal H$ induces. The hook $H_m$ has southwest endpoint $(d_m,\pi_{d_m})$ and northeast endpoint $(j,j)$ for some integer $j$ with $n-\tl(\pi)+1\leq j\leq n+1$. To prove that ${\bf q}^{\mathcal H}\in\mathcal V(\pi)+\{e_1,\ldots,e_{k+1}\}$, we consider two cases depending on whether or not $j=b$.

\medskip
\noindent {\bf Case 1.} Suppose $j=b$. This means that the hook $H_m$ of $\pi\oplus 1$ has the same endpoints as the hook $H_m^*$ of $\pi$. Therefore, $\pi_S^{H_m^*}=(\pi\oplus 1)_S^{H_m}$. Let $\sigma$ be the standardization of $\pi_U^{H_m^*}$, and observe that $\sigma\oplus 1$ is the standardization of $(\pi\oplus 1)_U^{H_m}$. Since two permutations with the same relative order have the same set of valid compositions, we have $\mathcal V\left((\pi\oplus 1)_U^{H_m}\right)=\mathcal V(\sigma\oplus 1)$ and $\mathcal V\left(\pi_U^{H_m^*}\right)=\mathcal V(\sigma)$. Proposition~\ref{Prop11} tells us that the valid composition of $(\pi\oplus 1)_U^{H_m}$ induced by $\varphi_U^{H_m}(\mathcal H)$ is $(q_0,\ldots,q_{m-1})$ and that the valid composition of $(\pi\oplus 1)_S^{H_m}$ induced by $\varphi_S^{H_m}(\mathcal H)$ is $(q_m,\ldots,q_k)$. This implies that $\mathcal V(\sigma)\neq\emptyset$, so we can use induction on $n$ to see that \[\mathcal V\left((\pi\oplus 1)_U^{H_m}\right)=\mathcal V(\sigma\oplus 1)\subseteq\mathcal V(\sigma)+\{e_1,\ldots,e_m\}=\mathcal V\left(\pi_U^{H_m^*}\right)+\{e_1,\ldots,e_m\}.\] It follows that $(q_0,\ldots,q_{m-1})=(q_0',\ldots,q_{m-1}')+e_i$ for some $(q_0',\ldots,q_{m-1}')\in\mathcal V\left(\pi_U^{H_m^*}\right)$ and $i\in[m]$ and that $(q_m,\ldots,q_k)\in\mathcal V\left((\pi\oplus 1)_S^{H_m}\right)=\mathcal V\left(\pi_S^{H_m^*}\right)$. Let $\mathcal H_U'$ be the valid hook configuration of $\pi_U^{H_m^*}$ that induces the valid composition $(q_0',\ldots,q_{m-1}')$, and let $\mathcal H_S'$ be the valid hook configuration of $\pi_S^{H_m^*}$ that induces the valid composition $(q_m,\ldots,q_k)$. According to Proposition~\ref{Prop11}, there is a valid hook configuration $\mathcal H'\in\VHC^{H_m^*}(\pi)$ such that $\varphi^{H_m^*}(\mathcal H')=(\mathcal H_U',\mathcal H_S')$. Furthermore, the valid composition of $\pi$ induced by $\mathcal H'$ is $(q_0',\ldots,q_{m-1}',q_m,\ldots,q_k)={\bf q}^{\mathcal H}-e_i$. Thus, ${\bf q}^{\mathcal H}\in\mathcal V(\pi)+\{e_1,\ldots,e_{k+1}\}$. 

\medskip
\noindent {\bf Case 2.} Suppose $j\neq b$. The hooks in the canonical hook configuration of $\pi\oplus 1$ are the same as the hooks $H_1^*,\ldots,H_k^*$ in the canonical hook configuration $\mathcal H^*$ of $\pi$. Since the canonical hook configuration of a permutation is constructed by choosing the northeast endpoints of the hooks as low as possible, the northeast endpoint $(j,j)$ of $H_m$ cannot be lower than the northeast endpoint $(b,b)$ of $H_m^*$. That is, $j>b$. Observe that $H_{m+1}^*,\ldots,H_k^*$ are precisely the hooks in $\mathcal H^*$ that lie below $H_m^*$. 

We now construct a new valid hook configuration $\widetilde{\mathcal H}=\left(\widetilde{H}_1,\ldots,\widetilde{H}_k\right)\in\VHC(\pi)$. For $m+1\leq i\leq k$, let $\widetilde{H}_i=H_i^*$. If $1\leq i\leq m$ and the northeast endpoint of $H_i$ is a point $(t,t)$ in the tail of $\pi\oplus 1$, let $\widetilde{H}_i$ be the hook with southwest endpoint $(d_i,\pi_{d_i})$ and northeast endpoint $(t-1,t-1)$. In particular, $\widetilde{H}_m$ has northeast endpoint $(j-1,j-1)$. Finally, if $1\leq i\leq m$ and the northeast endpoint of $H_i$ is not in the tail of $\pi\oplus 1$, let $\widetilde{H}_i$ be the hook of $\pi$ with the same endpoints as the hook $H_i$ of $\pi\oplus 1$. 

Let $\sigma$ be the standardization of $\pi_S^{\widetilde{H}_m}$. Since $\varphi_S^{\widetilde{H}_m}(\widetilde{\mathcal H})$ is a valid hook configuration of $\pi_S^{\widetilde{H}_m}$, the set $\mathcal V\left(\pi_S^{\widetilde{H}_m}\right)$ is nonempty. Thus, $\mathcal V(\sigma)\neq\emptyset$. Now observe that $(\pi\oplus 1)_S^{H_m}$ has the same relative order as $\sigma\oplus 1$. Since two permutations with the same relative order have the same set of valid compositions, we can use induction on $n$ to see that \[\mathcal V\left((\pi\oplus 1)_S^{H_m}\right)=\mathcal V(\sigma\oplus 1)\subseteq\mathcal V(\sigma)+\{e_1,\ldots,e_{k+1-m}\}=\mathcal V\left(\pi_S^{\widetilde{H}_m}\right)+\{e_1,\ldots,e_{k+1-m}\}.\] Also, $(\pi\oplus 1)_U^{H_m}$ has the same relative order as $\pi_U^{\widetilde{H}_m}$, so $\mathcal V\left((\pi\oplus 1)_U^{H_m}\right)=\mathcal V\left(\pi_U^{\widetilde{H}_m}\right)$.
Proposition~\ref{Prop11} tells us that the valid composition of $(\pi\oplus 1)_U^{H_m}$ induced by $\varphi_U^{H_m}(\mathcal H)$ is $(q_0,\ldots,q_{m-1})$ and that the valid composition of $(\pi\oplus 1)_S^{H_m}$ induced by $\varphi_S^{H_m}(\mathcal H)$ is $(q_m,\ldots,q_k)$. It follows that $(q_0,\ldots,q_{m-1})\in\mathcal V\left((\pi\oplus 1)_U^{H_m}\right)=\mathcal V\left(\pi_U^{\widetilde{H}_m}\right)$ and that $(q_m,\ldots,q_k)=(q_m',\ldots,q_k')+e_i$ for some $(q_m',\ldots,q_k')\in\mathcal V\left(\pi_S^{\widetilde{H}_m}\right)$ and $i\in[k+1-m]$. Let $\mathcal H_U'$ be the valid hook configuration of $\pi_U^{\widetilde{H}_m}$ that induces the valid composition $(q_0,\ldots,q_{m-1})$, and let $\mathcal H_S'$ be the valid hook configuration of $\pi_S^{\widetilde{H}_m}$ that induces the valid composition $(q_m',\ldots,q_k')$. According to Proposition~\ref{Prop11}, there is a valid hook configuration $\mathcal H'\in\VHC^{\widetilde{H}_m}(\pi)$ such that $\varphi^{\widetilde{H}_m}(\mathcal H')=(\mathcal H_U',\mathcal H_S')$. Furthermore, the valid composition of $\pi$ induced by $\mathcal H'$ is $(q_0,\ldots,q_{m-1},q_m',\ldots,q_k')={\bf q}^{\mathcal H}-e_{m+i}$. Thus, ${\bf q}^{\mathcal H}\in\mathcal V(\pi)+\{e_1,\ldots,e_{k+1}\}$ in this case as well. 
\end{proof}

\begin{figure}[ht]
  \begin{center}{\includegraphics[height=11.94cm]{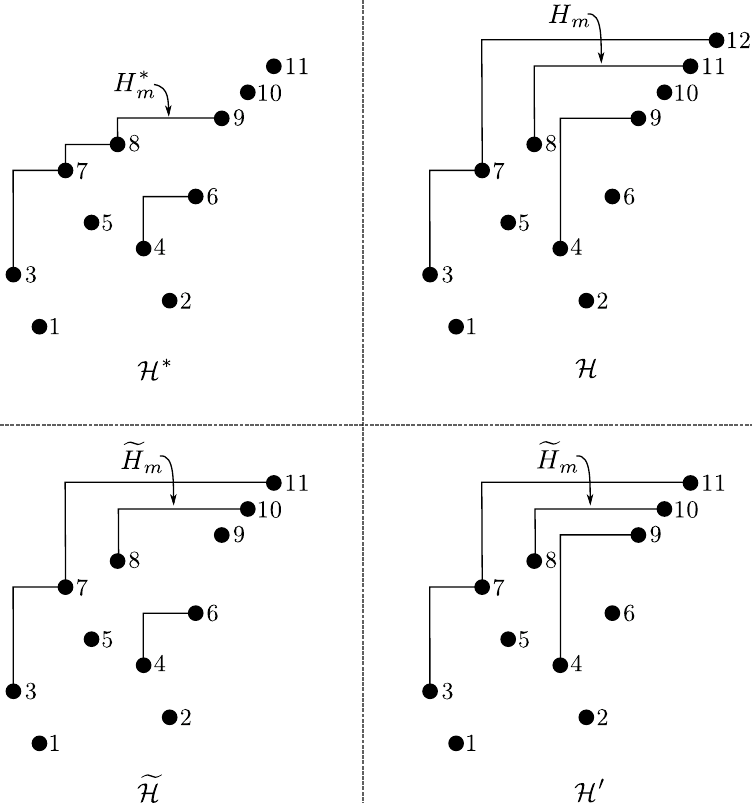}}
  \end{center}
  \caption{Valid hook configurations from Case 2 in the proof of Lemma~\ref{LemFertilitopes1}.}\label{FigFertilitopes2}
\end{figure}

\begin{example}
Let us illustrate Case 2 from the proof of Lemma~\ref{LemFertilitopes1}. Let $\pi=3\,1\,7\,5\,8\,4\,2\,6\,9\,10\,11$. The canonical hook configuration $\mathcal H^*$ of $\pi$ is presented in the upper left panel of Figure~\ref{FigFertilitopes2}. In this example, $m=3$ and $b=9$. Let us take $\mathcal H$ to be the valid hook configuration of $\pi\oplus 1$ in the upper right panel of Figure~\ref{FigFertilitopes2}. We have $j=11>b$, so we are indeed in Case 2 of the proof. The valid hook configuration $\widetilde{\mathcal H}$ appears in the bottom left panel of Figure~\ref{FigFertilitopes2}. We have ${\bf q}^{\mathcal H}=(q_0,\ldots,q_4)=(1,1,2,2,2)$. Also, $\pi_U^{\widetilde H_m}=3\,1\,7\,5\,8\,11$ and $\pi_S^{\widetilde H_m}=4269$. The valid hook configuration $\mathcal H_U'$ of $\pi_U^{\widetilde H_m}$ that induces the composition $(q_0,\ldots,q_{m-1})=(1,1,2)$ has one hook whose endpoints have heights $3$ and $7$ and a second hook whose endpoints have heights $7$ and $11$. In order to construct $\mathcal H_S'$, we must find an index $i\in\{1,2\}$ such that $(2,2)=(q_3',q_4')+e_i$ for some $(q_3',q_4')\in\mathcal V\left(\pi_S^{\widetilde{H}_m}\right)$ (the existence of such an index is ensured by the induction hypothesis in the proof). In this specific example, we have $\mathcal V\left(\pi_S^{\widetilde{H}_m}\right)=\{(1,2),(2,1)\}$, so we could choose either $i=1$ or $i=2$. Let us choose $i=1$. Then $\mathcal H_S'$ has a single hook whose endpoints have heights $4$ and $9$. The valid hook configuration $\mathcal H'=\left(\varphi^{\widetilde{H}_m}\right)^{-1}(\mathcal H_U',\mathcal H_S')$ is shown in the bottom right panel of Figure~\ref{FigFertilitopes2}; note that it induces the valid composition $(1,1,2,1,2)=(1,1,2,2,2)-e_4={\bf q}^{\mathcal H}-e_{m+i}$.  
\end{example}

\begin{lemma}\label{Lem2}
If $\pi=\pi_1\cdots\pi_n\in S_n$ is sorted, then \[\mathcal V(\pi\oplus 1)=\mathcal V(\pi)+\{e_1,\ldots,e_{k+1}\}.\]
\end{lemma}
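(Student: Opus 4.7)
The containment $\mathcal V(\pi \oplus 1) \subseteq \mathcal V(\pi) + \{e_1, \ldots, e_{k+1}\}$ is supplied by Lemma~\ref{LemFertilitopes1}, so only the reverse containment requires proof. My plan is to proceed by induction on $n$, with the base case $\pi = \id_n$ trivial because both sides collapse to $\{(n+1)\}$. In the inductive step, I fix a composition ${\bf q} = (q_0, \ldots, q_k) \in \mathcal V(\pi)$ induced by some $\mathcal H \in \VHC(\pi)$ and an index $j \in [k+1]$, and I construct an $\mathcal H' \in \VHC(\pi \oplus 1)$ inducing ${\bf q} + e_j$.

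The key idea is to split $\mathcal H$ via Proposition~\ref{Prop11} at the \emph{outermost} hook of $\mathcal H$ whose northeast endpoint lies in the tail of $\pi$, rather than at the hook $H_m$ appearing in the proof of Lemma~\ref{LemFertilitopes1}. Call such a hook \emph{tail-sited}. Tail-sited hooks exist (since $H_m$ is one) and form a nested chain in which smaller index corresponds to outer, so the outermost tail-sited hook is $H_M$, where $M$ is the smallest index with $H_M$ tail-sited; write $(c, c)$ for its northeast endpoint. The crucial observation is that the point $(c+1, c+1)$ of $\pi \oplus 1$ is not the northeast endpoint of any hook in $\mathcal H$: if it were, the corresponding hook would be tail-sited with northeast endpoint strictly higher than $H_M$'s, contradicting the minimality of $M$. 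Applying Proposition~\ref{Prop11} at $H_M$ yields $\varphi^{H_M}(\mathcal H) = (\mathcal H_U, \mathcal H_S)$ with $\mathcal H_U \in \VHC(\pi_U^{H_M})$ inducing $(q_0, \ldots, q_{M-1})$ and $\mathcal H_S \in \VHC(\pi_S^{H_M})$ inducing $(q_M, \ldots, q_k)$.

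When $j \in [M]$, I would keep $H_M$ unchanged as a hook of $\pi \oplus 1$. Then $(\pi \oplus 1)_U^{H_M}$ has the same relative order as $\sigma_U \oplus 1$, where $\sigma_U$ is the standardization of $\pi_U^{H_M}$ (the entries beyond position $d_M$ in either subpermutation form an increasing run exceeding $\pi_{d_M}$). Since $\sigma_U$ is sorted and smaller than $\pi$, the inductive hypothesis gives $\mathcal V(\sigma_U \oplus 1) = \mathcal V(\sigma_U) + \{e_1, \ldots, e_M\}$, providing a valid hook configuration $\mathcal H_U'$ of $(\pi \oplus 1)_U^{H_M}$ inducing $(q_0, \ldots, q_{M-1}) + e_j$. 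Combining $\mathcal H_U'$ with $\mathcal H_S$ via the inverse of $\varphi^{H_M}$ (applicable because $c > d_k$, as the tail lies strictly to the right of every descent) furnishes the required $\mathcal H'$.

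When $j \in \{M+1, \ldots, k+1\}$, I would replace $H_M$ by the hook $H_M'$ with southwest endpoint $(d_M, \pi_{d_M})$ and northeast endpoint $(c+1, c+1)$. The no-conflict observation, together with the fact that every entry of $\pi \oplus 1$ strictly between positions $d_M$ and $c+1$ is at most $c$, shows that $H_M'$ slots in among the other hooks of $\mathcal H$ to produce a valid hook configuration of $\pi \oplus 1$. Now $(\pi \oplus 1)_S^{H_M'}$ is obtained from $\pi_S^{H_M}$ by appending $c$, so it standardizes to $\sigma_S \oplus 1$, where $\sigma_S$ is the standardization of $\pi_S^{H_M}$. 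Induction applied to $\sigma_S$ gives $\mathcal V(\sigma_S \oplus 1) = \mathcal V(\sigma_S) + \{e_1, \ldots, e_{k-M+1}\}$, producing $\mathcal H_S' \in \VHC((\pi \oplus 1)_S^{H_M'})$ with induced composition $(q_M, \ldots, q_k) + e_{j-M}$. Combining $\mathcal H_S'$ with the natural lift of $\mathcal H_U$ to $(\pi \oplus 1)_U^{H_M'}$ (which has the same standardization $\sigma_U$ as $\pi_U^{H_M}$) via $\varphi^{H_M'}$ yields an $\mathcal H'$ with induced composition ${\bf q} + e_j$. The main technical hurdle is the no-conflict observation, and the reason this plan succeeds is precisely that splitting at $H_M$ (the outermost tail-sited hook) rather than at $H_m$ (which may be innermost) defuses it.
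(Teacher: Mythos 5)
Your proof is correct and follows essentially the same route as the paper's: reduce to the containment $\supseteq$ via Lemma~\ref{LemFertilitopes1}, induct on $n$, split a valid hook configuration inducing ${\bf q}$ at a tail-sited hook via Proposition~\ref{Prop11}, push the $+e_j$ into the unsheltered or the sheltered factor by induction according to whether $j\leq M$ or $j>M$ (keeping the hook, respectively lengthening its northeast endpoint one step up the tail), and reglue with Proposition~\ref{Prop11}. The only substantive difference is your choice of splitting hook --- the outermost tail-sited hook $H_M$ of $\mathcal H$ rather than the hook $H_m$ whose southwest endpoint is the descent top of height $n-\tl(\pi)$ --- together with the ``no-conflict observation'' that you present as the crux. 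That observation is true, but it is unnecessary, and the motivation you give for it rests on a misreading of how the regluing works: the inverse of $\varphi^{H}$ in Proposition~\ref{Prop11} assembles a valid hook configuration of $\pi\oplus 1$ from \emph{arbitrary} valid hook configurations of $(\pi\oplus1)_U^{H}$ and $(\pi\oplus1)_S^{H}$, so one never needs the lengthened hook to be compatible with the old hooks of $\mathcal H$ lying outside the sheltered region --- those are discarded and replaced by whatever $\mathcal H_U'$ supplies. Consequently the paper splits at $H_m$ without incident, and your extra layer of care about the nesting of tail-sited hooks buys nothing; it is harmless, though, and the remaining steps of your argument (the standardization identities for the unsheltered and sheltered subpermutations of $\pi\oplus 1$, and the applicability of Proposition~\ref{Prop11} because the new northeast endpoint lies to the right of $d_k$) are sound.
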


\begin{proof}
In light of Lemma~\ref{LemFertilitopes1}, it suffices to prove that $\mathcal V(\pi\oplus 1)\supseteq\mathcal V(\pi)+\{e_1,\ldots,e_{k+1}\}$. If $\pi=\id_n$, then $k=0$, $\mathcal V(\pi)=\{(n)\}$, and $\mathcal V(\pi\oplus 1)=\mathcal V(\id_{n+1})=\{(n+1)\}=\{(n)\}+\{e_1\}$. Therefore, we may assume $\pi\neq\id_n$ and $n\geq 2$ and proceed by induction on $n$. Let $d_1<\cdots<d_k$ be the descents of $\pi$ (these are also the descents of $\pi\oplus 1$). By the definition of the tail length, there is an index $m\in[k]$ such that $\pi_{d_m}=n-\tl(\pi)$. Observe that every hook of $\pi$ with southwest endpoint $(d_m,\pi_{d_m})$ has a northeast endpoint that lies in the tail of $\pi$. 

Choose a composition ${\bf q}\in\mathcal V(\pi)+\{e_1,\ldots,e_{k+1}\}$. There is an index $i\in[k+1]$ such that ${\bf q}=(q_0,\ldots,q_k)+e_i$ for some $(q_0,\ldots,q_k)\in\mathcal V(\pi)$. There exists a valid hook configuration $\mathcal H=(H_1,\ldots,H_k)$ of $\pi$ such that $(q_0,\ldots,q_k)$ is the valid composition ${\bf q}^{\mathcal H}$ induced by $\mathcal H$. According to Proposition~\ref{Prop11}, the valid composition of $\pi_U^{H_m}$ induced by $\varphi_U^{H_m}(\mathcal H)$ is $(q_0,\ldots,q_{m-1})$. Similarly, the valid composition of $\pi_S^{H_m}$ induced by $\varphi_S^{H_m}(\mathcal H)$ is $(q_m,\ldots,q_k)$. Our goal is to show that ${\bf q}$ is a valid composition of $\pi\oplus 1$; we consider two cases based on whether $i\leq m$ or $i\geq m+1$. 

\medskip
\noindent {\bf Case 1.} Suppose $i\leq m$. We can view $H_m$ as a hook of $\pi\oplus 1$ and consider the $H_m$-unsheltered subpermutation $(\pi\oplus 1)_U^{H_m}$. Let $\sigma$ be the standardization of $\pi_U^{H_m}$, and observe that the standardization of $(\pi\oplus 1)_U^{H_m}$ has the same relative order as $\sigma\oplus 1$. We have $\mathcal V\left(\pi_U^{H_m}\right)=\mathcal V(\sigma)$ and $\mathcal V\left((\pi\oplus 1)_U^{H_m}\right)=\mathcal V(\sigma\oplus 1)$. Since $(q_0,\ldots,q_{m-1})\in\mathcal V\left(\pi_U^{H_m}\right)=\mathcal V(\sigma)$, the permutation $\sigma$ must be sorted. Hence, we can use induction on $n$ to see that $\mathcal V(\sigma\oplus 1)=\mathcal V(\sigma)+\{e_1,\ldots,e_m\}$ and, therefore, $(q_0,\ldots,q_{m-1})+e_i\in\mathcal V(\sigma\oplus 1)=\mathcal V\left((\pi\oplus 1)_U^{H_m}\right)$. We also have $(\pi\oplus 1)_S^{H_m}=\pi_S^{H_m}$, so $(q_m,\ldots,q_k)\in\mathcal V\left(\pi_S^{H_m}\right)=\mathcal V\left((\pi\oplus 1)_S^{H_m}\right)$. Let $\mathcal H_U'$ and $\mathcal H_S'$ be the valid hook configurations of $(\pi\oplus 1)_U^{H_m}$ and $(\pi\oplus 1)_S^{H_m}$ that induce the valid compositions $(q_0,\ldots,q_{m-1})+e_i$ and $(q_m,\ldots,q_k)$, respectively. Proposition~\ref{Prop11} tells us that the valid hook configuration $\left(\varphi^{H_m}\right)^{-1}(\mathcal H_U',\mathcal H_S')$ of $\pi\oplus 1$ induces the composition $(q_0,\ldots,q_k)+e_i={\bf q}$, as desired.

\medskip
\noindent {\bf Case 2.} Suppose $i\geq m$. Let $(b,b)$ be the northeast endpoint of $H_m$. Let $H_m'$ be the hook of $\pi\oplus 1$ with southwest endpoint $(d_m,\pi_{d_m})$ and northeast endpoint $(b+1,b+1)$. Let $\sigma$ be the standardization of $\pi_S^{H_m}$, and observe that the standardization of $(\pi\oplus 1)_S^{H_m'}$ has the same relative order as $\sigma\oplus 1$. We have $\mathcal V\left(\pi_S^{H_m}\right)=\mathcal V(\sigma)$ and $\mathcal V\left((\pi\oplus 1)_S^{H_m'}\right)=\mathcal V(\sigma\oplus 1)$. Since $(q_m,\ldots,q_k)\in\mathcal V\left(\pi_S^{H_m}\right)=\mathcal V(\sigma)$, the permutation $\sigma$ is sorted. By induction, $\mathcal V(\sigma\oplus 1)=\mathcal V(\sigma)+\{e_1,\ldots,e_{k+1-m}\}$ and, therefore, \[(q_m,\ldots,q_k)+e_{i-m}\in\mathcal V(\sigma\oplus 1)=\mathcal V\left((\pi\oplus 1)_S^{H_m'}\right).\] Now observe that $(\pi\oplus 1)_U^{H_m'}$ has the same relative order as $\pi_U^{H_m}$; this implies that $(q_0,\ldots,q_{m-1})\in\mathcal V\left(\pi_U^{H_m}\right)=\mathcal V\left((\pi\oplus 1)_U^{H_m'}\right)$. Let $\mathcal H_U'$ and $\mathcal H_S'$ be the valid hook configurations of $(\pi\oplus 1)_U^{H_m'}$ and $(\pi\oplus 1)_S^{H_m'}$ that induce the valid compositions $(q_0,\ldots,q_{m-1})$ and $(q_m,\ldots,q_k)+e_{i-m}$, respectively. Proposition~\ref{Prop11} tells us that the valid hook configuration $\left(\varphi^{H_m'}\right)^{-1}(\mathcal H_U',\mathcal H_S')$ of $\pi\oplus 1$ induces the composition $(q_0,\ldots,q_k)+e_i={\bf q}$, as desired.
\end{proof}

The previous lemma allows us to make the first step toward connecting fertilitopes with binary nestohedra. 

\begin{proposition}\label{Prop1}
Let $\pi\in S_n$ be a permutation with $k$ descents such that $\Fer_\pi$ is a binary nestohedron and $\mathcal V(\pi)=\Fer_\pi\cap\,\mathbb Z^{k+1}\neq\emptyset$. Then $\Fer_{\pi\oplus 1}$ is a binary nestohedron such that \[\Fer_{\pi\oplus 1}=\Fer_\pi+\Delta_{[k+1]}\quad\text{and}\quad\mathcal V(\pi\oplus 1)=\Fer_{\pi\oplus 1}\cap\,\mathbb Z^{k+1}.\]
\end{proposition}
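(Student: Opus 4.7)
The plan is to upgrade Lemma~\ref{Lem2} from a set-theoretic identity about valid compositions into a Minkowski-sum identity about fertilitopes, and then invoke Postnikov's lattice-point theorem (Theorem~\ref{Thm:PostLattice}) to read off the lattice points of the new fertilitope.

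First I would invoke Lemma~\ref{Lem2} (which applies because $\mathcal V(\pi)\neq\emptyset$ forces $\pi$ to be sorted) to conclude that $\mathcal V(\pi\oplus 1)=\mathcal V(\pi)+\{e_1,\ldots,e_{k+1}\}$. Taking convex hulls of both sides and using the standard identity $\conv(A+B)=\conv(A)+\conv(B)$ for subsets $A,B\subseteq\mathbb R^{k+1}$, together with $\conv(\{e_1,\ldots,e_{k+1}\})=\Delta_{[k+1]}$, immediately yields the middle assertion
\[\Fer_{\pi\oplus 1}=\Fer_\pi+\Delta_{[k+1]}.\]

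Next I would promote this Minkowski identity to the statement that $\Fer_{\pi\oplus 1}$ is itself a binary nestohedron. Writing $\Fer_\pi=\Nest({\bf y})=\sum_{I\in\mathcal B}y_I\Delta_I$ for some binary building set $\mathcal B$ on $[k+1]$ and some ${\bf y}\in\mathbb R_{>0}^{\mathcal B}$, I would observe that $\Fer_\pi=\conv(\mathcal V(\pi))$ is the convex hull of a finite set of lattice points, hence integral, so Corollary~\ref{Cor1} forces $y_I\in\mathbb Z_{>0}$ for every $I\in\mathcal B$. Adjoining $[k+1]$ to $\mathcal B$ preserves the three bulleted conditions defining a binary building set, because $[k+1]$ is an interval that contains (and therefore is nested with) every other set of $[k+1]$. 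Increasing the coefficient of $\Delta_{[k+1]}$ by $1$ (or introducing the summand $\Delta_{[k+1]}$ afresh, if $[k+1]\notin\mathcal B$) then produces an integer coefficient tuple ${\bf y}'$ on the binary building set $\mathcal B\cup\{[k+1]\}$ with $\Nest({\bf y}')=\Fer_\pi+\Delta_{[k+1]}=\Fer_{\pi\oplus 1}$.

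Finally I would compute $\Fer_{\pi\oplus 1}\cap\mathbb Z^{k+1}$ using Theorem~\ref{Thm:PostLattice}. Since ${\bf y}'$ has positive integer coordinates, $\Fer_{\pi\oplus 1}$ can be written as a Minkowski sum $\sum_j\Delta_{I_j}$ of coordinate simplices with repetitions, and the same is true of $\Fer_\pi$. Theorem~\ref{Thm:PostLattice} then describes the lattice points as Minkowski sums of the vertex sets $\{e_i:i\in I_j\}$; factoring off the single extra summand $\Delta_{[k+1]}$ rewrites this set as $(\Fer_\pi\cap\mathbb Z^{k+1})+\{e_1,\ldots,e_{k+1}\}$, which equals $\mathcal V(\pi)+\{e_1,\ldots,e_{k+1}\}=\mathcal V(\pi\oplus 1)$ by the hypothesis and Lemma~\ref{Lem2}. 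Once Lemma~\ref{Lem2} is in hand the whole argument is routine; the only point that requires any care is checking that $\mathcal B\cup\{[k+1]\}$ is still a binary building set, which follows because $[k+1]$ is comparable under inclusion to every interval in $[k+1]$.
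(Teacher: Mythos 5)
Your proposal is correct and follows essentially the same route as the paper's proof: apply Lemma~\ref{Lem2} and take convex hulls to obtain $\Fer_{\pi\oplus 1}=\Fer_\pi+\Delta_{[k+1]}$, adjoin $[k+1]$ to the binary building set (bumping its coefficient) to see that the result is again a binary nestohedron, and use Corollary~\ref{Cor1} together with Theorem~\ref{Thm:PostLattice} to identify the lattice points. All steps are justified as in the paper, including the observation that $\mathcal B\cup\{[k+1]\}$ remains a binary building set.
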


\begin{proof}
According to Lemma~\ref{Lem2}, we have \[\Fer_{\pi\oplus 1}=\conv(\mathcal V(\pi\oplus 1))=\conv\left(\mathcal V(\pi)+\{e_1,\ldots,e_{k+1}\}\right)=\Fer_\pi+\Delta_{[k+1]}.\] Because $\Fer_\pi$ is a binary nestohedron, there exist a binary building set $\mathcal B$ on $[k+1]$ and a tuple ${\bf y}=(y_I)_{I\in\mathcal B}\in\mathbb R_{>0}^{\mathcal B}$ such that $\Fer_\pi=\Nest({\bf y})$. Let $\mathcal B'=\mathcal B\cup\{[k+1]\}$. For $I\in\mathcal B'\setminus\{[k+1]\}$, let $y_I'=y_I$. If $[k+1]\in\mathcal B$, let $y_{[k+1]}'=y_{[k+1]}+1$; otherwise, let $y_{[k+1]}'=1$. Then $\Fer_{\pi\oplus 1}=\Nest({\bf y}')$, where ${\bf y}'=(y_I')_{I\in\mathcal B'}\in\mathbb R_{>0}^{\mathcal B'}$. Since $\mathcal B'$ is a binary building set on $[k+1]$, the fertilitope $\pi\oplus 1$ is a binary nestohedron. 

Fertilitopes are integral by definition, so Corollary~\ref{Cor1} tells us that the numbers $y_I$ for $I\in\mathcal B$ are all positive integers. Let $I_1,\ldots,I_\ell$ be a list of the sets in $\mathcal B$ such that each set $I$ appears exactly $y_I$ times. Then $\Fer_\pi=\Nest({\bf y})=\sum_{j=1}^\ell\Delta_{I_j}$, so we can use Theorem~\ref{Thm:PostLattice} to see that $\mathcal V(\pi)=\Fer_\pi\cap\,\mathbb Z^{k+1}=\sum_{j=1}^\ell\{e_i:i\in I_j\}$. Similarly, $\left(\left(\sum_{j=1}^\ell\Delta_{I_j}\right)+\Delta_{[k+1]}\right)\cap\,\mathbb Z^{k+1}=\left(\sum_{j=1}^\ell\{e_i:i\in I_j\}\right)+\{e_1,\ldots,e_{k+1}\}=\mathcal V(\pi)+\{e_1,\ldots,e_{k+1}\}$. Invoking Lemma~\ref{Lem2}, we find that \[\mathcal V(\pi\oplus 1)=\mathcal V(\pi)+\{e_1,\ldots,e_{k+1}\}=\left(\left(\sum_{j=1}^\ell\Delta_{I_j}\right)+\Delta_{[k+1]}\right)\cap\,\mathbb Z^{k+1}=\Fer_{\pi\oplus 1}\cap\,\mathbb Z^{k+1}.\qedhere\]  
\end{proof}

For ease of exposition, we isolate the proof of one direction of Theorem~\ref{Thm:Main2} in a lemma. 

\begin{lemma}\label{Lem:Fertilitopes3}
Every integral binary nestohedron is a fertilitope. 
\end{lemma}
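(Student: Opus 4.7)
My plan is to induct on the recursive description of $\mathsf{IBN}$ from the remark following Corollary~\ref{Cor1}, proving the strengthened statement that every integral binary nestohedron $P\subseteq\mathbb R^{k+1}$ equals $\Fer_\rho$ for some permutation $\rho$ with $\mathcal V(\rho)=P\cap\mathbb Z^{k+1}$. The strengthening is forced on us because Proposition~\ref{Prop1} demands the integer-point identity as a hypothesis. The base case $\{(1)\}\subseteq\mathbb R$ is realized by $\rho=1\in S_1$, and closure under $P\mapsto P+\Delta_{[k+1]}$ follows immediately from Proposition~\ref{Prop1} applied to the permutation $\pi$ guaranteed by induction, replacing $\pi$ by $\pi\oplus 1$.

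The only substantive case is the Cartesian product $P=P_L\times P_R$. Given $\pi\in S_n$ with $k_1$ descents realizing $P_L$ and $\tau\in S_m$ with $k_2$ descents realizing $P_R$, I would define
\[\rho=(\pi_1+m)(\pi_2+m)\cdots(\pi_n+m)\,\tau_1\tau_2\cdots\tau_m\,(n+m+1)\in S_{n+m+1},\]
so that the entries of $\pi$ are placed above those of $\tau$ and the final entry dominates everything. A direct inspection shows that $\rho$ has $k_1+k_2+1$ descents: the $k_1$ descents of $\pi$, a new ``middle'' descent at position $n$ (where $\rho_n=n+m$ drops to $\rho_{n+1}=\tau_1\leq m$), and the $k_2$ descents of $\tau$. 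The crucial point is that the middle descent's hook is forced: the only point of $\rho$ lying to the right of $(n,n+m)$ at a greater height is the final entry $(n+m+1,n+m+1)$, since all $\tau$-entries have height at most $m<n+m$. Call this forced hook $H^*$.

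Because the northeast endpoint of $H^*$ sits strictly to the right of every descent of $\rho$, Proposition~\ref{Prop11} applies to $H^*$. Since every valid hook configuration of $\rho$ must contain $H^*$, this proposition supplies a bijection $\VHC(\rho)\to\VHC(\rho_U^{H^*})\times\VHC(\rho_S^{H^*})$ that splits induced compositions as concatenations (with split index $k_1+1$). Since $\rho_U^{H^*}$ has the same relative order as $\pi$ and $\rho_S^{H^*}$ equals $\tau$, this identifies $\mathcal V(\rho)$ with $\mathcal V(\pi)\times\mathcal V(\tau)$. Taking convex hulls yields $\Fer_\rho=\Fer_\pi\times\Fer_\tau=P$, and the integer-point identities on $\pi$ and $\tau$ then give $\mathcal V(\rho)=P\cap\mathbb Z^{k+1}$, closing the induction. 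The main work that needs care is the force on the middle hook and the verification that no other hook of $\rho$ can cross between the $\pi$-part and the $\tau$-part; both are short consequences of the strict height separation between the two parts, but they are the steps that demand explicit checking.
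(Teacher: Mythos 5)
Your proposal is correct and follows essentially the same route as the paper: the same induction (organized via the recursive description of $\mathsf{IBN}$ rather than on $\sum_I y_I$, but with identical case analysis), the same use of Proposition~\ref{Prop1} for the $P+\Delta_{[k+1]}$ step, and the same product permutation $\rho=(\pi_1+m)\cdots(\pi_n+m)\tau_1\cdots\tau_m(n+m+1)$ with the forced middle hook and Proposition~\ref{Prop11} splitting. Carrying the integer-point identity $\mathcal V(\rho)=\Fer_\rho\cap\mathbb Z^{k+1}$ as an explicit strengthening of the induction hypothesis is a reasonable (arguably tidier) way to license the application of Proposition~\ref{Prop1}; the paper instead gets this from Theorem~\ref{Thm:Main1}, which is proved by a separate induction.
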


\begin{proof}
Let $\Nest({\bf y})$ be an integral binary nestohedron, where $\mathcal B$ is a binary building set on $[k+1]$ and ${\bf y}=(y_I)_{I\in\mathcal B}\in\mathbb R_{>0}^{\mathcal B}$. Corollary~\ref{Cor1} tells us that the numbers $y_I$ are actually positive integers. We will prove by induction on the sum $Y=\sum_{I\in\mathcal B}y_I$ that $\Nest({\bf y})$ is a fertilitope. If $Y=1$, then we have $k=0$, $\mathcal B=\{\{1\}\}$, and $y_{\{1\}}=1$, so $\Nest({\bf y})=\{(1)\}\subseteq\mathbb R$ is the fertilitope of the permutation $1\in S_1$. Now assume $Y\geq 2$. 

Suppose $[k+1]\in\mathcal B$. Let $y_I'=y_I$ for all $I\in\mathcal B\setminus\{[k+1]\}$ and $y_{[k+1]}'=y_{[k+1]}-1$. The Minkowski sum $P'=\sum_{I\in\mathcal B}y_I'\Delta_{I}$ is a binary nestohedron, so we can use induction on $Y$ to see that there exist a positive integer $n$ and a permutation $\tau\in S_n$ such that $P'=\Fer_\tau$. According to Proposition~\ref{Prop1}, we have $\Nest({\bf y})=P'+\Delta_{[k+1]}=\Fer_\tau+\Delta_{[k+1]}=\Fer_{\tau\oplus 1}$, so $\Nest({\bf y})$ is a fertilitope. 

We now assume $[k+1]\not\in\mathcal B$. It follows immediately from the definition of a binary nestohedron that there exist (nonempty) binary nestohedra $Q$ and $Q'$ such that $\Nest({\bf y})=Q\times Q'$. By induction, there are positive integers $n$ and $n'$ and permutations $\tau=\tau_1\cdots\tau_n\in S_n$ and $\tau'=\tau_1'\cdots\tau_{n'}'\in S_{n'}$ such that $Q=\Fer_\tau$ and $Q'=\Fer_{\tau'}$. Since $\mathcal V(\tau)$ and $\mathcal V(\tau')$ are nonempty, we know by Remark~\ref{Rem1} that $\tau_n=n$ and $\tau_{n'}=n'$. Consider the permutation $\sigma=(\tau_1+n')\cdots (\tau_n+n')\tau_1'\cdots\tau_{n'}'(n+n'+1)\in S_{n+n'+1}$. Let $H$ be the hook of $\sigma$ with southwest endpoint $(n,n+n')$ and northeast endpoint $(n+n'+1,n+n'+1)$; this is the unique hook of $\sigma$ with southwest endpoint $(n,n+n')$. Because $n$ is a descent of $\sigma$, every valid hook configuration of $\sigma$ must contain $H$. That is, $\VHC(\sigma)=\VHC^H(\sigma)$. Proposition~\ref{Prop11} now tells us that the $H$-splitting map $\varphi^{H}$ is a bijection from $\VHC(\sigma)$ to $\VHC\left(\sigma_U^{H}\right)\times \VHC\left(\sigma_S^{H}\right)$. By Proposition~\ref{Prop11} and Remark~\ref{Rem6}, we have $\mathcal V(\sigma)=\mathcal V\left(\sigma_U^{H}\right)\times\mathcal V\left(\sigma_S^{H}\right)$. The subpermutations $\sigma_U^H$ and $\sigma_S^H$ have the same relative orders as $\tau$ and $\tau'$, respectively, so $\mathcal V\left(\sigma\right)=\mathcal V(\tau)\times\mathcal V(\tau')$. Finally, \[\Nest({\bf y})=\Fer_\tau\times\Fer_{\tau'}=\conv(\mathcal V(\tau))\times\conv(\mathcal V(\tau'))=\conv(\mathcal V(\tau)\times\mathcal V(\tau'))=\conv(\mathcal V(\sigma))=\Fer_\sigma,\] so $\Nest({\bf y})$ is a fertilitope.  
\end{proof}

We are now in a position to complete the proofs of our main theorems. 

\begin{proof}[Proof of Theorems~\ref{Thm:Main1} and \ref{Thm:Main2}]
Let $\pi$ be a sorted permutation of size $n$ with $k$ descents. We will prove by induction on $n$ that $\Fer_\pi$ is a binary nestohedron (it is automatically integral by definition) such that $\mathcal V(\pi)=\Fer_\pi\cap\,\mathbb Z^{k+1}$. Since permutations with the same relative order have the same fertilitope, we may assume $\pi\in S_n$. If $\pi=\id_n$, then $\Fer_\pi=\mathcal V(\pi)=\{(n)\}$ is a binary nestohedron corresponding to the binary building set $\mathcal B=\{\{1\}\}$ with the coefficient $y_{\{1\}}=n$. Thus, we may assume $n\geq 2$ and $\pi\neq\id_n$. 

Because $\pi$ is sorted (meaning $\mathcal V(\pi)\neq\emptyset$), it has a canonical hook configuration $\mathcal H^*=(H_1^*,\ldots,H_k^*)$. It follows from the definition of $s$ that $\pi_n=n$. Thus, we may write $\pi=\pi'\oplus 1$ for some $\pi'\in S_{n-1}$. If $\mathcal V(\pi')\neq\emptyset$, then we know by induction that $\Fer_{\pi'}$ is a binary nestohedron and that $\mathcal V(\pi')=\Fer_{\pi'}\cap\,\mathbb Z^{k+1}$. Therefore, the desired result follows immediately from Proposition~\ref{Prop1} in this case.  

We now assume $\mathcal V(\pi')=\emptyset$. Equivalently, $\pi'$ has no valid hook configurations. If none of the hooks in $\mathcal H^*$ had the point $(n,n)$ as a northeast endpoint, then we could view $\mathcal H^*$ as a valid hook configuration of $\pi'$, which would be a contradiction. Therefore, there must be some hook $H_d^*$ in $\mathcal H^*$ with northeast endpoint $(n,n)$. Let $\mathcal H=(H_1,\ldots,H_k)$ be an arbitrary valid hook configuration of $\pi$. The canonical hook configuration of $\pi$ is constructed by choosing the northeast endpoints of the hooks as low as possible. This implies that the northeast endpoint of $H_d$ must be at least as high as that of $H_d^*$, so the northeast endpoint of $H_d$ must also be $(n,n)$. Since $H_d$ and $H_d^*$ have the same southwest endpoint $(d,\pi_d)$, we conclude that $H_d=H_d^*$. This proves that the set $\VHC(\pi)$ of valid hook configurations of $\pi$ is equal to the set $\VHC^{H_d^*}(\pi)$ of valid hook configurations of $\pi$ that contain the hook $H_d^*$. Therefore, Proposition~\ref{Prop11} tells us that the $H_d^*$-splitting map $\varphi^{H_d^*}$ is a bijection from $\VHC(\pi)$ to $\VHC\left(\pi_U^{H_d^*}\right)\times \VHC\left(\pi_S^{H_d^*}\right)$. It now follows from Proposition~\ref{Prop11} and Remark~\ref{Rem6} that $\mathcal V(\pi)=\mathcal V\left(\pi_U^{H_d^*}\right)\times\mathcal V\left(\pi_S^{H_d^*}\right)$. Hence, \[\Fer_\pi=\conv(\mathcal V(\pi))=\conv\left(\mathcal V\left(\pi_U^{H_d^*}\right)\times\mathcal V\left(\pi_S^{H_d^*}\right)\right)=\Fer_{\pi_U^{H_d^*}}\times\Fer_{\pi_S^{H_d^*}}.\]

We know by induction on $n$ that $\Fer_{\pi_U^{H_d^*}}$ and $\Fer_{\pi_S^{H_d^*}}$ are binary nestohedra such that \[\mathcal V\left(\pi_U^{H_d^*}\right)=\Fer_{\pi_U^{H_d^*}}\cap\,\mathbb Z^{k+1}\quad\text{and}\quad\mathcal V\left(\pi_S^{H_d^*}\right)=\Fer_{\pi_S^{H_d^*}}\cap\,\mathbb Z^{k+1}.\] It is straightforward to check that the Cartesian product of two binary nestohedra is a binary nestohedron, so we conclude that $\Fer_\pi$ is a binary nestohedron. Moreover, \[\mathcal V(\pi)=\left(\Fer_{\pi_U^{H_d^*}}\cap\,\mathbb Z^{k+1}\right)\times\left(\Fer_{\pi_S^{H_d^*}}\cap\,\mathbb Z^{k+1}\right)=\left(\Fer_{\pi_U^{H_d^*}}\times \Fer_{\pi_S^{H_d^*}}\right)\cap\,\mathbb Z^{k+1}=\Fer_\pi\cap\,\mathbb Z^{k+1}.\]  

This proves Theorem~\ref{Thm:Main1} and one direction of Theorem~\ref{Thm:Main2}; the other direction of Theorem~\ref{Thm:Main2} is Lemma~\ref{Lem:Fertilitopes3}. 
\end{proof}

We end this section with a consequence of the main theorems that we have just proven. For $\pi\in S_n$ and $m\geq 0$, recall that we write $\pi\oplus \id_m$ for the permutation $\pi(n+1)\cdots(n+m)\in S_{n+m}$. If $\pi$ is a sorted permutation with $k$ descents, then Theorem~\ref{Thm:Main1}, Theorem~\ref{Thm:Main2}, and Proposition~\ref{Prop1} imply that $\Fer_{\pi\oplus \id_m}=\Fer_\pi+m\Delta_{[k+1]}$ and that $\mathcal V(\pi\oplus \id_m)=\left(\Fer_\pi+m\Delta_{[k+1]}\right)\cap\,\mathbb Z^{k+1}$, where $\Fer_\pi$ is an integral binary nestohedron. The Ehrhart-style function $|\left(\Fer_\pi+m\Delta_{[k+1]}\right)\cap\mathbb Z^{k+1}|$ is known to be a degree-$k$ polynomial in $m$ (see, e.g., \cite{Kalman}). Hence, we have the following corollary. 

\begin{corollary}
For every sorted permutation $\pi\in S_n$ with $\des(\pi)=k$, there exists a polynomial $g_\pi(t)\in\mathbb R[t]$ of degree $k$ such that $|\mathcal V(\pi\oplus\id_m)|=g_\pi(m)$ for all $m\geq 0$.  
\end{corollary}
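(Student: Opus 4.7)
The plan is to carry out explicitly the reasoning sketched in the paragraph immediately preceding the corollary, combining Proposition~\ref{Prop1} and the main theorems with a classical fact about lattice point enumeration of Minkowski sums.

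First, I would prove by induction on $m\geq 0$ that
\[
\Fer_{\pi\oplus\id_m}=\Fer_\pi+m\Delta_{[k+1]}\quad\text{and}\quad \mathcal V(\pi\oplus\id_m)=\Fer_{\pi\oplus\id_m}\cap\,\mathbb Z^{k+1}.
\]
The base case $m=0$ is immediate. For the inductive step, write $\pi\oplus\id_{m+1}=(\pi\oplus\id_m)\oplus 1$. The permutation $\pi\oplus\id_m$ has $\des(\pi\oplus\id_m)=k$ because appending an increasing run to the right creates no new descents, and it is sorted because, by Lemma~\ref{Lem2} applied iteratively (or by the inductive hypothesis together with $\mathcal V(\pi)\neq\emptyset$), $\mathcal V(\pi\oplus\id_m)$ is nonempty. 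Theorems~\ref{Thm:Main1} and~\ref{Thm:Main2} then show that $\Fer_{\pi\oplus\id_m}$ is an integral binary nestohedron satisfying $\mathcal V(\pi\oplus\id_m)=\Fer_{\pi\oplus\id_m}\cap\,\mathbb Z^{k+1}\neq\emptyset$. Hence the hypotheses of Proposition~\ref{Prop1} are met, and applying that proposition yields the desired identities at level $m+1$.

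Second, I would invoke the general Ehrhart-style result on Minkowski sums of integral polytopes (as in \cite{Kalman}, which itself rests on work of McMullen): for integral polytopes $P,Q\subseteq\mathbb R^{k+1}$, the function $m\mapsto |(P+mQ)\cap\,\mathbb Z^{k+1}|$ agrees, for $m\geq 0$, with a polynomial in $m$. Specializing to $P=\Fer_\pi$ and $Q=\Delta_{[k+1]}$ and combining with the identification of $\mathcal V(\pi\oplus\id_m)$ from the first step produces the sought polynomial $g_\pi(t)$.

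The only remaining point is the degree claim, which I expect to be the main obstacle in the sense that it is the one piece not completely spelled out in the pre-corollary discussion. The upper bound $\deg g_\pi\leq k$ follows because every element of $\Fer_\pi+m\Delta_{[k+1]}$ lies in the $k$-dimensional affine hyperplane $\{x\in\mathbb R^{k+1}:x_1+\cdots+x_{k+1}=n-k+m\}$. For the matching lower bound, observe that $\Fer_\pi+m\Delta_{[k+1]}$ contains a translate of $m\Delta_{[k+1]}$, and the number of lattice points of $m\Delta_{[k+1]}$ is $\binom{m+k}{k}$, a polynomial of degree exactly $k$ in $m$; equivalently, the leading coefficient of $g_\pi$ equals the relative $k$-dimensional volume of $\Delta_{[k+1]}$ within its affine hull, which is strictly positive. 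Together these bounds give $\deg g_\pi=k$, completing the proof.
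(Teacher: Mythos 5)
Your proposal is correct and follows essentially the same route as the paper: iterate Proposition~\ref{Prop1} to obtain $\Fer_{\pi\oplus\id_m}=\Fer_\pi+m\Delta_{[k+1]}$ and $\mathcal V(\pi\oplus\id_m)=\left(\Fer_\pi+m\Delta_{[k+1]}\right)\cap\mathbb Z^{k+1}$, then invoke the Ehrhart-style polynomiality of $m\mapsto|(P+mQ)\cap\mathbb Z^{k+1}|$ from \cite{Kalman}. The only difference is that you explicitly justify the degree being exactly $k$ (the paper delegates this to the citation), and your argument for it --- the upper bound from the ambient affine hyperplane and the lower bound from the lattice-point count $\binom{m+k}{k}$ of a translate of $m\Delta_{[k+1]}$ by a point of $\mathcal V(\pi)\subseteq\mathbb Z^{k+1}$ --- is sound.
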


\section{Canonical and Quasicanonical Trees}\label{Sec:Quasicanonical}

In this section, we provide a method for explicitly computing, for each sorted permutation $\pi$, a tuple ${\bf y}^\pi$ such that $\Fer_\pi=\Nest({\bf y}^\pi)$. In order to do so, we introduce quasicanonical trees, which we prove are in bijection with valid hook configurations. Incidentally, we will obtain a new combinatorial formula for converting from a sequence of free cumulants to the corresponding sequence of classical cumulants. 

Bousquet-M\'elou defined a decreasing binary plane tree to be \dfn{canonical} if every vertex $v$ that has a left child also has a nonempty right subtree $\mathcal T_v^R$ such that the first entry in the in-order traversal $\mathcal I(\mathcal T_v^R)$ is smaller than the label of the left child of $v$. Let us say a decreasing binary plane tree is \dfn{quasicanonical} if every vertex $v$ that has a left child also has a nonempty right subtree $\mathcal T_v^R$ such that the first entry in the postorder traversal $\mathcal P(\mathcal T_v^R)$ is smaller than the label of the left child of $v$. Every canonical tree is also quasicanonical; indeed, if $\mathcal T$ is a decreasing binary plane tree, then the first entry in $\mathcal P(\mathcal T)$ is less than or equal to the first entry in $\mathcal I(\mathcal T)$. 

Given a valid hook configuration $\mathcal H\in\VHC(\pi)$ for some permutation $\pi$ of size $n$, we define a quasicanonical tree $\Theta(\mathcal H)$ as follows. If $i\in[n-1]$ is a descent of $\pi$, then $(i,\pi_i)$ is the southwest endpoint of a hook $H$ in $\mathcal H$; in this case, we make $\pi_i$ a left child of $\pi_j$, where $(j,\pi_j)$ is the northeast endpoint of $H$. If $i\in[n-1]$ is not a descent of $\pi$, we make $\pi_i$ a right child of $\pi_{i+1}$. For example, if $\mathcal H$ is the valid hook configuration shown on the left in Figure~\ref{Fig19}, then $\Theta(\mathcal H)$ is the quasicanonical tree on the right. Note that quasicanonical trees are in $\mathsf{DBPT}$ by definition, so their vertices are supposed to all be black. The colors of the vertices in Figure~\ref{Fig19} are just meant to illustrate the construction and how it connects with the coloring induced by the valid hook configuration; they are not part of the definition of $\Theta(\mathcal H)$. 

\begin{figure}[h]
\begin{center} 
\includegraphics[height=5.87cm]{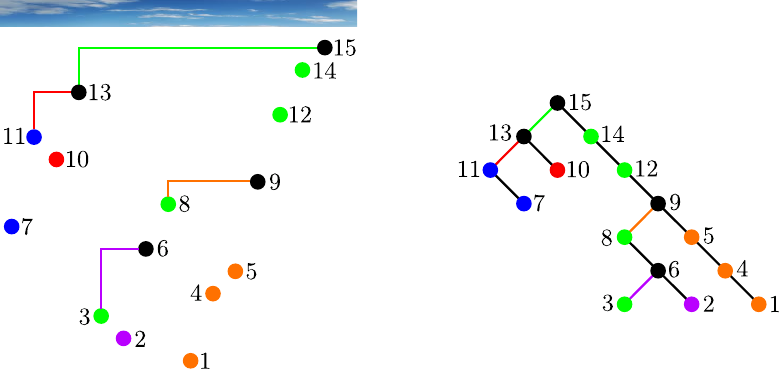}
\end{center}
\caption{On the left is a valid hook configuration $\mathcal H\in\VHC(\pi)$, where $\pi=7\,11\,10\,13\,3\,2\,6\,8\,1\,4\,5\,9\,12\,14\,15$. On the right is the quasicanonical tree $\Theta(\mathcal H)$. We have colored the hooks and points in $\mathcal H$ using the coloring described in Section~\ref{Subsec:VHCs}, and we have colored the vertices of $\Theta(\mathcal H)$ accordingly. We view $\Theta(\mathcal H)$ as a decreasing binary plane tree whose vertices are all black; the colors in the figure are just meant to illustrate the correspondence with the points in the colored diagram of $\mathcal H$. Notice also that the left edges of $\Theta(\mathcal H)$ are colored to indicate their correspondence with the hooks of $\mathcal H$.}\label{Fig19} 
\end{figure}

Given a quasicanonical tree $\mathcal T$ with $n$ vertices, we can consider the in-order traversal $\mathcal I(\mathcal T)$, which is a permutation of size $n$. Let $r_1<\cdots<r_p$ be the peaks of $\mathcal I(\mathcal T)$. We define the composition ${\bf q}^{\mathcal T}=(r_1-1,r_2-r_1-1,\ldots,r_p-r_{p-1}-1,n-r_p)$. For example, if $\mathcal T$ is the quasicanonical tree on the right in Figure~\ref{Fig19}, then $\mathcal I(\mathcal T)=11\,7\,13\,10\,15\,14\,12\,8\,3\,6\,2\,9\,5\,4\,1$. The peaks of this permutation are $r_1=3$, $r_2=5$, $r_3=10$, and $r_4=12$, so ${\bf q}^{\mathcal T}=(2,1,4,1,3)$.

\begin{theorem}\label{Thm55}
The map $\Theta$ is a bijection from the set of valid hook configurations to the set of quasicanonical trees. Furthermore, ${\bf q^{\mathcal H}}={\bf q^{\Theta(\mathcal H)}}$ for every valid hook configuration $\mathcal H$. 
\end{theorem}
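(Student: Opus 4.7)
The plan is to prove all claims in Theorem~\ref{Thm55} simultaneously by joint induction on $n=|\pi|$, with Proposition~\ref{Prop11} (the $H$-splitting map) as the main engine. The base case $n=1$ is immediate: both sides contain a unique element, and the composition $(1)$ matches on both sides.

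For the inductive step I would split into two cases based on whether $\pi_n$, which equals $\max(\pi)$ by Remark~\ref{Rem1}, is the northeast endpoint of some hook in $\mathcal H$. In the \emph{non-endpoint case}, I would first argue that $\pi_{n-1}=\max\{\pi_1,\ldots,\pi_{n-1}\}$: otherwise, the position of the actual maximum of $\{\pi_1,\ldots,\pi_{n-1}\}$ would be a descent whose hook must terminate at $(n,\pi_n)$, contradicting the assumption. Then $\mathcal H$ restricts to a VHC $\mathcal H'$ of $\pi'=\pi_1\cdots\pi_{n-1}$, and $\Theta(\mathcal H)$ is obtained by grafting $\Theta(\mathcal H')$ as the right subtree of a new root $\pi_n$. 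In the \emph{endpoint case}, the unique hook $H$ with NE at $(n,\pi_n)$, say with SW at $(d_m,\pi_{d_m})$, satisfies the hypothesis of Proposition~\ref{Prop11} (since $n>d_k$), and therefore splits $\mathcal H$ into $\mathcal H_U\in\VHC(\pi_U^H)$ and $\mathcal H_S\in\VHC(\pi_S^H)$. A key structural lemma is that $\Theta(\mathcal H)$ has root $\pi_n$ with left subtree $\Theta(\mathcal H_U)$ and right subtree $\Theta(\mathcal H_S)$. This reduces to showing that no other hook has one endpoint in $\pi_U^H$ and the other in $\pi_S^H$, which follows from a case analysis based on Conditions~\ref{Item2} and \ref{Item3} of Definition~\ref{Def5} applied to the vertical segment of $H$ at column $d_m$: the point $(d_m,\pi_{d_m})$ cannot lie above another hook's horizontal, and $H$'s vertical cannot cross another hook's horizontal.

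Once the decomposition is established, $\mathcal P(\Theta(\mathcal H))=\pi$ is immediate from the postorder recursion and the induction hypothesis. Quasicanonicality at the new root $\pi_n$ follows because the first postorder entry of $\Theta(\mathcal H_S)$ equals $\pi_{d_m+1}$ by induction, and $\pi_{d_m+1}<\pi_{d_m}$ by the descent condition at $d_m$; quasicanonicality at all other left-child vertices is inherited from the induction hypothesis. For the composition identity, I would decompose $\mathcal I(\Theta(\mathcal H))=\mathcal I(\Theta(\mathcal H_U))\,\pi_n\,\mathcal I(\Theta(\mathcal H_S))$ and observe that, because $\pi_n=\max$, position $d_m+1$ is a peak of $\mathcal I(\Theta(\mathcal H))$ while the adjacent positions cannot be peaks. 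Therefore ${\bf q}^{\Theta(\mathcal H)}$ is the concatenation ${\bf q}^{\Theta(\mathcal H_U)}\cdot{\bf q}^{\Theta(\mathcal H_S)}$, and by the induction hypothesis combined with Proposition~\ref{Prop11} this equals $(q_0,\ldots,q_{m-1})\cdot(q_m,\ldots,q_k)={\bf q}^{\mathcal H}$. The non-endpoint case is similar and simpler: an extra $1$ is added to the first part of both compositions.

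For the bijection, I would construct an explicit inverse $\Theta^{-1}$: given a quasicanonical tree $\mathcal T$, set $\pi=\mathcal P(\mathcal T)$ and, for each left edge $(v,u)$ in $\mathcal T$, install a hook with SW at the position of $u$ in $\pi$ and NE at the position of $v$. I expect the main obstacle of the proof to be verifying that this collection of hooks satisfies Conditions~\ref{Item2} and~\ref{Item3} of Definition~\ref{Def5}. The argument rests on the observation that the set of postorder positions of any subtree of $\mathcal T$ forms a contiguous interval, so two such intervals are either disjoint or nested; this, combined with the decreasing property of $\mathcal T$ (labels of descendants are strictly smaller than those of ancestors), ensures no point sits above a hook and that pairs of hooks are either disjoint or nested in columns with the inner one at strictly lower height. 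Finally, a parallel induction confirms $\Theta\circ\Theta^{-1}=\mathrm{id}$ and $\Theta^{-1}\circ\Theta=\mathrm{id}$, completing the proof.
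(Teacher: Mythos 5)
Your proposal is correct, but it takes a genuinely different route from the paper for most of the theorem. The paper's proof is direct and non-inductive: quasicanonicality of $\Theta(\mathcal H)$ is checked locally at each vertex with a left child (the first postorder entry of its right subtree is $\pi_{i+1}$, which is less than $\pi_i$ because $i$ is a descent); injectivity is immediate because the hooks are recoverable from the left edges; surjectivity is handled exactly as you propose, by exhibiting the inverse and checking Conditions \ref{Item2} and \ref{Item3} of Definition~\ref{Def5}; and the composition identity is obtained globally, by observing that the peaks of $\sigma=\mathcal I(\Theta(\mathcal H))$ are precisely the labels of the northeast endpoints of $H_1,\ldots,H_k$ in order, and that the entries of $\sigma$ strictly between consecutive peaks are exactly the labels of the points receiving the corresponding color. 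Your induction on $|\pi|$, driven by Proposition~\ref{Prop11} and the case split on whether $(n,\pi_n)$ is a northeast endpoint, replaces this global peak analysis with the recursive identity $\mathcal I(\Theta(\mathcal H))=\mathcal I(\Theta(\mathcal H_U))\,\pi_n\,\mathcal I(\Theta(\mathcal H_S))$; this mirrors the decomposition the paper uses to prove Theorems~\ref{Thm:Main1}, \ref{Thm:Main2}, and \ref{Thm:Fertilitopes3}, so it fits the paper's toolkit, at the cost of your extra structural lemma that no hook other than $H$ has endpoints on both sides of column $d_m$. Your Condition-\ref{Item2}/\ref{Item3} case analysis for that lemma is the right one; just note that you must also rule out a second hook terminating at $(n,\pi_n)$, which follows from Condition \ref{Item3} because distinct hooks cannot share a northeast endpoint. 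On balance the paper's direct argument is shorter, while yours makes quasicanonicality and the identity ${\bf q}^{\mathcal H}={\bf q}^{\Theta(\mathcal H)}$ essentially automatic once the splitting lemma is in place.
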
  

\begin{proof}
Let $\pi=\pi_1\cdots\pi_n$ be a permutation. We first argue that if $\mathcal H\in\VHC(\pi)$, then $\Theta(\mathcal H)$ is indeed quasicanonical. To see this, suppose $\pi_j$ is a vertex in $\Theta(\mathcal H)$ with a left child $\pi_i$. By construction, there is a hook in $\mathcal H$ with southwest endpoint $(i,\pi_i)$ and northeast endpoint $(j,\pi_j)$. Furthermore, the vertex $\pi_j$ has a right subtree, and the first entry in the postorder traversal of that subtree is $\pi_{i+1}$. We know that $\pi_i>\pi_{i+1}$ because $(i,\pi_i)$ is the southwest endpoint of a hook. This proves that $\Theta(\mathcal H)$ is quasicanonical. One can readily check that $\pi=\mathcal P(\Theta(\mathcal H))$. The hooks of $\mathcal H$ can be recovered from the left edges of $\Theta(\mathcal H)$, so $\Theta$ is injective. 

Let us now describe the inverse map $\Theta^{-1}$. Given a quasicanonical tree $\mathcal T$, we let $\pi=\mathcal P(\mathcal T)$ and define $\mathcal H$ to be the valid hook configuration of $\pi$ whose hooks correspond to the left edges in $\mathcal T$. More precisely, a left edge in $\mathcal T$ with endpoints $a$ and $b$ corresponds to a hook in $\mathcal H$ whose endpoints have heights $a$ and $b$. It follows from the definition of a quasicanonical tree that the southwest endpoints of the hooks of $\mathcal H$ are the descent tops of $\pi$. It is straightforward to check that $\mathcal H$ satisfies Conditions \ref{Item2} and \ref{Item3} in Definition~\ref{Def5}, so $\mathcal H$ is indeed a valid hook configuration. It follows from our construction that $\mathcal H=\Theta(\mathcal T)$, so $\Theta$ is surjective. 

We now prove that ${\bf q}^{\mathcal H}={\bf q}^{\Theta(\mathcal H)}$. Suppose $\mathcal H=(H_1,\ldots,H_k)\in\VHC(\pi)$ for some permutation $\pi=\pi_1\cdots\pi_n$. Let $\sigma=\sigma_1\cdots\sigma_n=\mathcal I(\Theta(\mathcal H))$, and let $r_1<\cdots<r_p$ be the peaks of $\sigma$ so that ${\bf q}^{\Theta(\mathcal H)}=(r_1-1,r_2-r_1-1,\ldots,r_p-r_{p-1}-1,n-r_p)$. The entries $\sigma_{r_1},\ldots,\sigma_{r_p}$ are precisely the labels of the vertices of $\Theta(\mathcal H)$ that have $2$ children (equivalently, that have left children). Moreover, we have $p=k$, and $\sigma_{r_\ell}$ is the height of the northeast endpoint of $H_\ell$ for all $1\leq \ell\leq k$. The heights of the points in the plot of $\pi$ that lie below $H_\ell$ are the labels of the vertices of $\Theta(\mathcal H)$ that lie in the right subtree of the vertex with label $\sigma_{r_\ell}$. It follows that a point $(i,\pi_i)$ receives the same color as $H_\ell$ in the coloring induced by $\mathcal H$ (i.e., $H_\ell$ is the lowest hook lying above $(i,\pi_i)$) if and only if $\pi_i=\sigma_j$ for some $r_\ell<j<r_{\ell+1}$ (with the conventions $r_0=0$ and $r_{p+1}=n+1$). Thus, if ${\bf q}^{\mathcal H}=(q_0,\ldots,q_k)$, then $q_\ell=r_{\ell+1}-r_\ell-1$.   
\end{proof}

Theorem~\ref{Thm55} allows us to rewrite the Refined Tree Fertility Formula (and its corollaries) in terms of quasicanonical trees. We can do the same with the VHC Cumulant Formula, producing the following new combinatorial formula for converting from free to classical cumulants. Let $\QCan_n$ denote the set of quasicanonical trees on $[n]$.

\begin{corollary}\label{Cor:Fertilitopes2}
If $(\kappa_n)_{n\geq 1}$ is a sequence of free cumulants, then the corresponding classical cumulants are given by \[-c_n=\sum_{\mathcal T\in\QCan_{n-1}}(-\kappa_{\bullet+1})_{\bf q^{\mathcal T}}.\]  
\end{corollary}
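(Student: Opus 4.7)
The proof plan is essentially to compose two bijections. Start from the VHC Cumulant Formula (Theorem~\ref{VHCCF}), which expresses
\[
-c_n=\sum_{\pi\in S_{n-1}}\sum_{{\bf q}\in\mathcal V(\pi)}(-\kappa_{\bullet+1})_{\bf q}.
\]
The first step is to rewrite the inner sum as a sum over valid hook configurations rather than over valid compositions. By Remark~\ref{Rem6}, for each $\pi$ the map $\VHC(\pi)\to\mathcal V(\pi)$ sending $\mathcal H\mapsto{\bf q}^{\mathcal H}$ is a bijection, so
\[
\sum_{{\bf q}\in\mathcal V(\pi)}(-\kappa_{\bullet+1})_{\bf q}=\sum_{\mathcal H\in\VHC(\pi)}(-\kappa_{\bullet+1})_{{\bf q}^{\mathcal H}}.
\]

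The second step is to collapse the double sum $\sum_{\pi\in S_{n-1}}\sum_{\mathcal H\in\VHC(\pi)}$ into a single sum over $\QCan_{n-1}$. By the convention in Definition~\ref{Def5}, distinct permutations have disjoint sets of valid hook configurations, so the double sum is really a sum over the disjoint union $\bigsqcup_{\pi\in S_{n-1}}\VHC(\pi)$. I would then verify that the bijection $\Theta$ of Theorem~\ref{Thm55} restricts to a bijection from this disjoint union to $\QCan_{n-1}$. For this, note that if $\mathcal H\in\VHC(\pi)$ with $\pi\in S_{n-1}$, then $\Theta(\mathcal H)$ has $n-1$ vertices whose labels are precisely the entries of $\pi$, namely $[n-1]$, so $\Theta(\mathcal H)\in\QCan_{n-1}$; conversely, for any $\mathcal T\in\QCan_{n-1}$, the proof of Theorem~\ref{Thm55} shows that $\mathcal P(\mathcal T)\in S_{n-1}$ and $\mathcal T=\Theta(\Theta^{-1}(\mathcal T))$ with $\Theta^{-1}(\mathcal T)\in\VHC(\mathcal P(\mathcal T))$.

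Once the bijection is in place, the identity ${\bf q}^{\mathcal H}={\bf q}^{\Theta(\mathcal H)}$ from Theorem~\ref{Thm55} makes the summand transform trivially, giving
\[
\sum_{\pi\in S_{n-1}}\sum_{\mathcal H\in\VHC(\pi)}(-\kappa_{\bullet+1})_{{\bf q}^{\mathcal H}}=\sum_{\mathcal T\in\QCan_{n-1}}(-\kappa_{\bullet+1})_{{\bf q}^{\mathcal T}}.
\]
Chaining the three equalities yields the corollary. There is no real obstacle here since all the combinatorial work has already been done in Theorem~\ref{VHCCF}, Remark~\ref{Rem6}, and Theorem~\ref{Thm55}; the only thing to be careful about is the bookkeeping that $\Theta$ indeed sends $\VHC$ of permutations of size $n-1$ to quasicanonical trees labeled by $[n-1]$, which follows immediately from $\mathcal P(\Theta(\mathcal H))=\pi$.
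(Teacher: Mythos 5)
Your proposal is correct and follows the same route as the paper: apply the VHC Cumulant Formula, replace the sum over $\mathcal V(\pi)$ by a sum over $\VHC(\pi)$ via Remark~\ref{Rem6}, and then transport the sum over $\VHC(S_{n-1})$ to $\QCan_{n-1}$ via the bijection $\Theta$ and the identity ${\bf q}^{\mathcal H}={\bf q}^{\Theta(\mathcal H)}$ from Theorem~\ref{Thm55}. The extra bookkeeping you include (that $\Theta$ matches permutation size with vertex-label set) is a correct, if routine, verification that the paper leaves implicit.
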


\begin{proof}
By the VHC Cumulant Formula (Theorem~\ref{VHCCF}), Theorem~\ref{Thm55}, and Remark~\ref{Rem6}, we have \[-c_n=\sum_{\pi\in S_{n-1}}\sum_{{\bf q}\in\mathcal V(\pi)}(-\kappa_{\bullet+1})_{\bf q}=\sum_{\mathcal H\in\VHC(S_{n-1})}(-\kappa_{\bullet+1})_{\bf q^{\mathcal H}}=\sum_{\mathcal T\in\QCan_{n-1}}(-\kappa_{\bullet+1})_{{\bf q}^{\mathcal T}}.\qedhere\]
\end{proof}

Let us now turn back to the canonical trees defined by Bousquet-M\'elou. Her main motivation for defining these trees was to understand sorted permutations. A permutation is sorted if and only if it has a valid hook configuration, and this occurs if and only if it has a canonical hook configuration (as mentioned after the definition of canonical hook configurations in Section~\ref{Sec:Main}). It follows from the identity $s=\mathcal P\circ\mathcal I^{-1}$ in \eqref{sPcircI} that a permutation $\pi$ is sorted if and only if there is a decreasing binary plane tree with postorder traversal $\pi$. Bousquet-M\'elou proved that if $\pi$ is sorted, then there is a unique canonical tree $\mathcal T$ with postorder traversal $\pi$. Moreover, she showed
that $\mathcal I(\mathcal T)$ is the unique element of $s^{-1}(\pi)$ with the maximum number of inversions. 

\begin{theorem}
Let $\pi\in S_n$ be a sorted permutation, and let $\mathcal H^*$ be its canonical hook configuration. Then $\Theta(\mathcal H^*)$ is the unique canonical tree with postorder traversal $\pi$. 
\end{theorem}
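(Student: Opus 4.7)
The plan is to establish the result in three pieces: first, Theorem~\ref{Thm55} gives that $\Theta(\mathcal H^*)$ is a quasicanonical tree with postorder $\pi$; second, I will verify directly that this tree is in fact canonical; third, Bousquet-M\'elou's uniqueness theorem for canonical trees with a prescribed postorder will deliver uniqueness.

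The heart of the argument is the second step. Fix a vertex $v=\pi_j$ of $\Theta(\mathcal H^*)$ that has a left child. By the definition of $\Theta$, $(j,\pi_j)$ is the NE endpoint of some $H_\ell^*\in\mathcal H^*$ with SW endpoint $(d_\ell,\pi_{d_\ell})$, the left child of $v$ is $\pi_{d_\ell}$, and $\pi_{j-1}<\pi_j$ (since $(j-1,\pi_{j-1})$ is sheltered by $H_\ell^*$), so $\mathcal T_v^R$ is rooted at $\pi_{j-1}$. Starting at $\pi_{j-1}$ and walking down left children of $\Theta(\mathcal H^*)$ produces a chain of positions $j-1=j_0>j_1>\cdots>j_T$, where for $t\ge 1$ the position $j_t$ is the descent of $\pi$ whose hook in $\mathcal H^*$ has NE endpoint $j_{t-1}$, and $j_T$ is the terminal index of the chain (i.e., $\pi_{j_T}$ has no left child in $\Theta(\mathcal H^*)$). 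A short unwinding of the recursion $\mathcal I(\mathcal T)=\mathcal I(\text{left})\cdot\text{root}\cdot\mathcal I(\text{right})$ shows that the first entry of $\mathcal I(\mathcal T_v^R)$ equals $\pi_{j_T}$, so the canonical condition at $v$ reduces to the inequality $\pi_{j_T}<\pi_{d_\ell}$.

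Assume for contradiction that $\pi_{j_T}>\pi_{d_\ell}$ (equality is impossible since all entries are distinct). I will show that $(j_T,\pi_{j_T})$ satisfies the two requirements that the canonical construction imposes on any NE endpoint for $H_\ell^*$: (i) no point $(i,\pi_i)$ with $d_\ell<i<j_T$ has $\pi_i>\pi_{j_T}$, and (ii) $(j_T,\pi_{j_T})$ does not lie weakly below any of the already-placed hooks $H_{\ell+1}^*,\ldots,H_k^*$. Since $\pi_{j_T}<\pi_j$ (as $\pi_{j_T}$ labels a strict descendant of $v$ in the decreasing tree), this will contradict the canonical rule that $\pi_j$ be the \emph{lowest} valid choice. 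Both (i) and (ii) follow from the structural identity, proved by induction on $t$, that for each $0\le t\le T$ the subtree of $\Theta(\mathcal H^*)$ rooted at $\pi_{j_t}$ has vertex-set corresponding to plot positions $\{d_\ell+1,\ldots,j_t\}$. Applied at $t=T$, this identity together with the fact that $\pi_{j_T}$ has no left child forces the right subtree of $\pi_{j_T}$ to account for positions $\{d_\ell+1,\ldots,j_T-1\}$; since $\Theta(\mathcal H^*)$ is decreasing, all these labels are less than $\pi_{j_T}$, giving (i). For (ii), the non-crossing condition will force any offending $H_m^*$ (with $m>\ell$) to nest inside $H_\ell^*$, so its NE vertex $\pi_{j_m'}$ lies in $\mathcal T_v^R$ and must be an ancestor of $\pi_{j_T}$ with $\pi_{j_T}$ in its right subtree; but every ancestor of $\pi_{j_T}$ in $\mathcal T_v^R$ is one of the chain vertices $\pi_{j_0},\ldots,\pi_{j_{T-1}}$, each of which has $\pi_{j_T}$ in its \emph{left} subtree, a contradiction.

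The main obstacle is the structural identity for the subtrees along the chain; this is where the bookkeeping is delicate, since it has to couple the left/right decomposition at each chain vertex with the SW/NE data of the corresponding hook of $\mathcal H^*$. With that identity in place, (i) is immediate from the decreasing property of $\Theta(\mathcal H^*)$ and (ii) falls out of an ancestor-in-which-subtree dichotomy. Completing the second step and invoking Bousquet-M\'elou's uniqueness theorem will then yield the result.
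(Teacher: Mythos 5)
Your proposal is correct and follows essentially the same route as the paper: reduce to showing that the first entry $c$ of $\mathcal I(\mathcal T_v^R)$ is smaller than the label of the left child of $v$, and derive a contradiction from $c>\pi_{d_\ell}$ by exhibiting a lower admissible northeast endpoint for $H_\ell^*$, violating the minimality defining the canonical hook configuration, before invoking Bousquet-M\'elou's uniqueness. The only difference is presentational: the paper identifies $c$ as a point receiving the color of $H_\ell^*$ via the coloring analysis in the proof of Theorem~\ref{Thm55}, whereas you verify the candidacy conditions directly through the left-spine and subtree-position bookkeeping, which amounts to filling in details the paper asserts.
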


\begin{proof}
We have seen that $\mathcal P(\Theta(\mathcal H^*))=\pi$, so it suffices to prove that $\Theta(\mathcal H^*)$ is canonical. Let $v$ be a vertex of $\Theta(\mathcal H^*)$ that has a left child with label $a$. Let $b$ be the label of $v$. Because $\Theta(\mathcal H^*)$ is quasicanonical, $v$ has a nonempty right subtree $\mathcal T_v^R$. There is a hook $H$ in $\mathcal H^*$ whose southwest endpoint has height $a$ and whose northeast endpoint has height $b$. Let $c$ be the first entry in $\mathcal I(\mathcal T_v^R)$, and note that $c<b$ because $\Theta(\mathcal H^*)$ is a decreasing binary plane tree. Suppose, by way of contradiction, that $c>a$. We saw in the proof of Theorem~\ref{Thm55} that $c$ is the height of a point in the plot of $\pi$ that receives the same color as $H$ in the coloring induced by $\mathcal H^*$. This means that there is a hook $H'$ (not in $\mathcal H^*$) whose southwest endpoint has height $a$ and whose northeast endpoint has height $c$; it also means that we can obtain a new valid hook configuration of $\pi$ from $\mathcal H^*$ by replacing $H$ with $H'$. However, this contradicts the fact that the canonical hook configuration $\mathcal H^*$ is constructed so that all of the northeast endpoints of the hooks are as low as possible. We deduce that $c<a$; as $v$ was arbitrary, this proves that $\Theta(\mathcal H^*)$ is canonical.  
\end{proof}

We now return to fertilitopes and binary nestohedra. A permutation is sorted if and only if its fertilitope is nonempty. Now suppose $\pi\in S_n$ is a sorted permutation with $k$ descents, and let $\mathcal H^*$ be its canonical hook configuration. Theorem~\ref{Thm:Main2} tells us that the fertilitope $\Fer_\pi$ is an integral binary nestohedron. We will show how the canonical tree $\Theta(\mathcal H^*)$ immediately yields a binary building set $\mathcal B^\pi$ and a tuple of positive integers ${\bf y}^\pi=(y_I^\pi)_{I\in\mathcal B^\pi}$ such that $\Fer_\pi=\Nest({\bf y}^\pi)$. 

Let $T^\pi=\skel(\Theta(\mathcal H^*))$ be the tree obtained by removing the labels from $\Theta(\mathcal H^*)$. Now assign each leaf of $T^\pi$ the label $1$, and assign each internal vertex the label $0$. Suppose there is a vertex $v$ of $T^\pi$ that has no left child and has a right child $u$, where $u$ is either a leaf or a vertex with $2$ children. Contract the edge connecting $u$ and $v$ into a single vertex, and identify the new vertex with the original vertex $u$. Also, increase the label of $u$ by $1$. Continue repeating this process of contracting edges until the resulting tree is full (i.e., every vertex is either a leaf or has $2$ children). Call this resulting tree $\widehat T^\pi$. It is straightforward to check that $\widehat T^\pi$ does not depend on the order in which we contracted the edges.

The tree $T^\pi$ has $k$ left edges, and none of these edges were contracted when we formed $\widehat T^\pi$. Therefore, $\widehat T^\pi$ has $k$ left edges, $k$ right edges, and $k+1$ leaves. Identify the leaves of $\widehat T^\pi$ with the singleton sets $\{1\},\ldots,\{k+1\}$ from left to right, and identify each internal vertex with the union of the leaves lying below it. For each vertex $I$, let $y_I^\pi$ be the label assigned to the vertex $I$. Finally, let $\mathcal B^\pi$ be the set of vertices $I$ of $\widehat T^\pi$ such that $y_I^\pi>0$, and put ${\bf y}^\pi=(y_I^\pi)_{I\in\mathcal B^\pi}$.  

\begin{figure}[h]
\begin{center} 
\includegraphics[height=10.48cm]{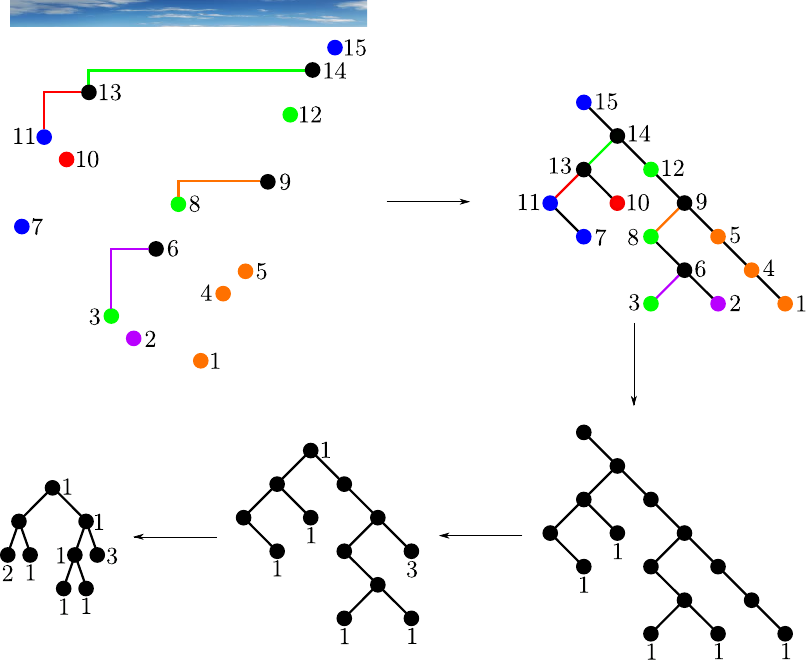}
\end{center}
\caption{The construction of $\widehat T^\pi$ from the canonical hook configuration $\mathcal H^*$ of $\pi$.}\label{FigFertilitopes1} 
\end{figure}

\begin{example}
Let $\pi=7\,11\,10\,13\,3\,2\,6\,8\,1\,4\,5\,9\,12\,14\,15$. The canonical hook configuration $\mathcal H^*$ of $\pi$ is shown (with its induced coloring) in the upper left of Figure~\ref{FigFertilitopes1}. The canonical tree $\Theta(\mathcal H^*)$ appears in the upper right of Figure~\ref{FigFertilitopes1}. In the bottom right of the same figure, we have the tree $T^\pi=\skel(\Theta(\mathcal H^*))$, where each leaf has been assigned the label $1$. The tree in the bottom middle is the result of applying three edge contractions to $T^\pi$. After applying three more edge contractions, we obtain the tree $\widehat T^\pi$ shown in the bottom left. All vertices that do not appear with labels are assumed to have label $0$.  

The labels of the vertices of $\widehat T^\pi$ translate to the numbers $y_{\{1\}}^\pi=2$, $y_{\{2\}}^\pi=1$, $y_{\{3\}}^\pi=1$, $y_{\{4\}}^\pi=1$, $y_{\{5\}}^\pi=3$, $y_{\{1,2\}}^\pi=0$, $y_{\{3,4\}}^\pi=1$, $y_{\{3,4,5\}}^\pi=1$, and $y_{\{1,2,3,4,5\}}^\pi=1$. Consequently, \[\mathcal B^\pi=\{\{1\},\{2\},\{3\},\{4\},\{5\},\{3,4\},\{3,4,5\},\{1,2,3,4,5\}\}.\qedhere\] 
\end{example}

\begin{theorem}\label{Thm:Fertilitopes3}
Let $\pi$ be a sorted permutation with $k$ descents, and let $\mathcal B^\pi$ and ${\bf y}^\pi=(y_I^\pi)_{I\in\mathcal B^\pi}$ be as defined above. Then $\Fer_\pi=\Nest({\bf y}^\pi)$. 
\end{theorem}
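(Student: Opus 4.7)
The plan is to prove the theorem by induction on $n = |\pi|$, matching the case analysis used in the proof of Theorems~\ref{Thm:Main1} and~\ref{Thm:Main2}. The base case $n = 1$ is immediate: the canonical tree of $\pi = 1$ is a single leaf, so $\widehat T^\pi$ is a single leaf with label $1$, giving $\mathcal B^\pi = \{\{1\}\}$, $y^\pi_{\{1\}} = 1$, and $\Nest({\bf y}^\pi) = \{(1)\} = \Fer_\pi$. For the inductive step, since a sorted permutation satisfies $\pi_n = n$ (by Remark~\ref{Rem1}), I write $\pi = \pi' \oplus 1$ with $\pi' \in S_{n-1}$ and split into two cases depending on whether or not $\pi'$ is sorted, exactly as in that earlier proof.

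In the first case ($\pi'$ is sorted), the key structural observation is that $\mathcal T^\pi \coloneqq \Theta(\mathcal H^*_\pi)$ is obtained from $\mathcal T^{\pi'}$ by attaching a new root labeled $n$ whose right subtree is $\mathcal T^{\pi'}$ and which has no left child. Indeed, the tree built in this way is decreasing, has postorder $\mathcal P(\mathcal T^{\pi'})\cdot n = \pi$, and is canonical (the new root vacuously satisfies the canonicity condition since it has no left child), so Bousquet-Mélou's uniqueness of canonical trees forces the identification. Consequently $T^\pi$ is $T^{\pi'}$ with an extra right-parent on top, and the contractions producing $\widehat T^\pi$ first reduce the $T^{\pi'}$ part to $\widehat T^{\pi'}$ and then perform one final contraction merging the new root into the root of $\widehat T^{\pi'}$ (a legal contraction, since that root is either a leaf or has two children), bumping its label by $1$. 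Since the root of $\widehat T^{\pi'}$ corresponds to the set $[k+1]$ with $k = \des(\pi) = \des(\pi')$, this yields $\Nest({\bf y}^\pi) = \Nest({\bf y}^{\pi'}) + \Delta_{[k+1]}$, which equals $\Fer_{\pi'} + \Delta_{[k+1]} = \Fer_\pi$ by the inductive hypothesis and Proposition~\ref{Prop1}.

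In the second case ($\pi'$ is not sorted), the argument in the proof of Theorems~\ref{Thm:Main1} and~\ref{Thm:Main2} provides a hook $H_d^*$ in the canonical hook configuration of $\pi$ with northeast endpoint $(n, n)$ such that $\Fer_\pi = \Fer_{\pi_U^{H_d^*}} \times \Fer_{\pi_S^{H_d^*}}$. The structural claim here is that the root $n$ of $\mathcal T^\pi$ has a left subtree equal (after restandardizing labels) to the canonical tree of the standardization of $\pi_U^{H_d^*}$ and a right subtree equal (after restandardizing labels) to the canonical tree of the standardization of $\pi_S^{H_d^*}$; this is forced by reading off the postorders of the two subtrees from $\mathcal P(L)\mathcal P(R)\,n = \pi$, together with the fact that canonicity is hereditary on subtrees. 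Because the root of $T^\pi$ has two children, it is never a candidate for contraction and retains label $0$, so $[k+1] \notin \mathcal B^\pi$ and the contractions proceed independently in the two subtrees. Thus $\widehat T^\pi$ is obtained by attaching the two inductively-built trees beneath a root with label $0$; correspondingly $\mathcal B^\pi$ splits (after shifting indices in the right subtree) as the disjoint union $\mathcal B^{\pi_U^{H_d^*}} \sqcup \mathcal B^{\pi_S^{H_d^*}}$, and $\Nest({\bf y}^\pi) = \Nest({\bf y}^{\pi_U^{H_d^*}}) \times \Nest({\bf y}^{\pi_S^{H_d^*}})$ because the Minkowski sum of two polytopes supported on disjoint coordinate subspaces equals their Cartesian product. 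The inductive hypothesis then completes the case.

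I expect the main obstacle to be pinning down the two structural claims about the canonical tree of $\pi$—that it is obtained from $\mathcal T^{\pi'}$ by adding a right-parent in Case 1, and that its two subtrees at the root are (up to relabeling) the canonical trees of the standardizations of $\pi_U^{H_d^*}$ and $\pi_S^{H_d^*}$ in Case 2. Both reduce to Bousquet-Mélou's uniqueness result combined with a careful use of the postorder decomposition of a decreasing tree, but they are where the combinatorial content of the proof lives; once they are in place, the bookkeeping for the labels during contraction and the polytopal consequences are routine.
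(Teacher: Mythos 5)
Your proposal is correct and follows essentially the same route as the paper: induction on $n$ with the same two cases ($\pi'$ sorted versus not), the same structural identifications of the canonical tree, and the same contraction bookkeeping leading to $\Nest({\bf y}^{\pi'})+\Delta_{[k+1]}$ in the first case and a Cartesian product in the second. The only cosmetic difference is that you justify the structural claims via Bousquet-M\'elou's uniqueness of canonical trees together with heredity of canonicity, whereas the paper reads them off directly from the fact that the canonical hook configurations of $\pi'$, $\pi_U^{H_d^*}$, and $\pi_S^{H_d^*}$ consist precisely of the corresponding hooks of $\mathcal H^*$.
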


\begin{proof}
If $\pi=\id_n$, then the tree $T^\pi=\skel(\Theta(\mathcal H^*))$ has $n$ vertices, $n-1$ right edges, and no left edges. After contracting the $n-1$ edges, we are left with the tree $\widehat T^{\id_n}$, which has a single vertex labeled $n$. Then $\mathcal B^\pi=\{\{1\}\}$ and $y_{\{1\}}^\pi=n$, so $\Nest({\bf y}^\pi)=\{(n)\}=\Fer_{\id_n}$. We may now assume $\pi\neq\id_n$ (hence, $n\geq 2$) and proceed by induction on $n$. Being sorted, the permutation $\pi$ ends with the entry $n$, so we can write $\pi=\pi'\oplus 1$ for some $\pi'=S_{n-1}$. 

Suppose first that $\mathcal V(\pi')\neq\emptyset$. Proposition~\ref{Prop1} tells us that $\Fer_{\pi}=\Fer_{\pi'}+\Delta_{[k+1]}$. Let $(\mathcal H')^*$ be the canonical hook configuration of $\pi'$. The hooks in $(\mathcal H')^*$ are the same as those in $\mathcal H^*$; in particular, the point $(n,n)$ in the plot of $\pi$ is not the northeast endpoint of a hook in $\mathcal H^*$. This implies that the vertex with label $n$ in the canonical tree $\Theta(\mathcal H^*)$ (i.e., the root) has no left child. The right subtree of this root vertex is $\Theta((\mathcal H')^*)$. Let $T^{\pi'}=\skel(\Theta((\mathcal H')^*))$, and let $\widehat T^{\pi'}$ be the full binary plane tree obtained from $T'$ via the edge-contraction process described above. When we contract the edge of $T^\pi$ whose upper vertex is the root, we obtain the tree $T^{\pi'}$, except with the root labeled $1$ instead of $0$. Therefore, $\widehat T^\pi$ is obtained from $\widehat T^{\pi'}$ by increasing the label of the root vertex (i.e., the vertex identified with the set $[k+1]$) by $1$. We have ${\bf y}^{\pi'}=(y_I^{\pi'})_{I\in\mathcal B^{\pi'}}$, where $y_I^{\pi'}$ is the label of the vertex $I$ in $\widehat T^{\pi'}$ and $\mathcal B^{\pi'}=\{I:y_I^{\pi'}>0\}$. Then $y_{[k+1]}^{\pi}=y_{[k+1]}^{\pi'}+1$ and $y_I^{\pi}=y_I^{\pi'}$ for all vertices $I\neq[k+1]$. It follows by induction on $n$ that $\Fer_{\pi'}=\Nest({\bf y}^{\pi'})$, so \[\Fer_{\pi}=\Fer_{\pi'}+\Delta_{[k+1]}=\Nest({\bf y}^{\pi'})+\Delta_{[k+1]}=\Nest({\bf y}^{\pi}).\]

Next, assume $\mathcal V(\pi')=\emptyset$. We saw in the proof of Theorems~\ref{Thm:Main1} and \ref{Thm:Main2} that there is a hook $H_d^*$ in the canonical hook configuration $\mathcal H^*$ of $\pi$ whose northeast endpoint is $(n,n)$. In addition, we saw that \[\Fer_\pi=\Fer_{\pi_U^{H_d^*}}\times\Fer_{\pi_S^{H_d^*}}.\] Let $\mathcal H_U^*$ and $\mathcal H_S^*$ be the canonical hook configurations of $\pi_U^{H_d^*}$ and $\pi_S^{H_d^*}$, respectively. The hooks of $\mathcal H_U^*$ (respectively, $\mathcal H_S^*$) are precisely the hooks of $\mathcal H^*$ that are hooks of $\pi_U^{H_d^*}$ (respectively, $\pi_S^{H_d^*}$). Therefore, the left (respectively, right) subtree of the root of $\Theta(\mathcal H^*)$ is $\Theta(\mathcal H_U^*)$ (respectively, $\Theta(\mathcal H_S^*)$). Upon inspecting the edge-contraction process, we find that $\widehat T^\pi$ consists of a root vertex with label $0$ whose left and right subtrees are $\widehat T^{\pi_U^{H_d^*}}$ and $\widehat T^{\pi_S^{H_d^*}}$, respectively, except that the vertices of the right subtree are identified with different subsets. More precisely, the tree $\widehat T^{\pi_U^{H_d^*}}$ has $d$ leaves, so each vertex $I=\{a_1,\ldots,a_r\}$ of $\widehat T^{\pi_S^{H_d^*}}$ is identified with the set $I+d=\{a_1+d,\ldots,a_r+d\}$ in $\widehat T^\pi$. Let ${\bf y}_U$ and ${\bf y}_S$ be the tuples of nonzero labels of vertices of $\widehat T^{\pi_U^{H_d^*}}$ and $\widehat T^{\pi_S^{H_d^*}}$, respectively. Let ${\bf y}_S^+$ be the set of nonzero labels of the vertices in the right subtree of the root of $\widehat T^\pi$. Then ${\bf y}_S$ and ${\bf y}_S^+$ are essentially the same tuple of numbers: each number $y_I$ labeling the vertex $I$ in the first tuple is equal to the number $y_{I+d}$ labeling the corresponding vertex $I+d$ in the second tuple. We can now use induction on $n$ to deduce that 
\[\Fer_\pi=\Fer_{\pi_U^{H_d^*}}\times\Fer_{\pi_S^{H_d^*}}=\Nest({\bf y}_U)\times \Nest({\bf y}_S)=\Nest({\bf y}).\qedhere\] 
\end{proof}

\begin{remark}
By combining Theorem~\ref{Thm:Fertilitopes3} with Postnikov's Theorem~\ref{Thm:PostnikovFN}, we can read off the dimension and the face numbers of $\Fer_\pi$ from the canonical hook configuration of $\pi$. The number of facets is particularly easy to compute because it is simply $|\mathcal B^\pi\setminus\mathcal B_{\max}^\pi|$ when $\dim(\Fer_\pi)>0$. 
\end{remark}

\section{Fertility Numbers}\label{Sec:FertilityNumbers}

For many years, the stack-sorting map was thought to be extremely complicated and devoid of structure. However, this turns out to be far from the truth. Much of the structure underlying the map $s$ comes from the Fertility Formula. For example, there are no permutations $\pi$ such that $|s^{-1}(\pi)|=3$; this would certainly not be the case if $s$ truly behaved in a chaotic fashion. 

We say a nonnegative integer $f$ is a \dfn{fertility number} if it is equal to the fertility of some permutation. An \dfn{infertility number} is a nonnegative integer that is not a fertility number. According to the Fertility Formula (Corollary~\ref{FF}), Theorem~\ref{Thm:Main1}, and Theorem~\ref{Thm:Main2}, a positive integer $f$ is a fertility number if and only if it is of the form 
\[\sum_{{\bf q}\in \Nest({\bf y})\cap\,\mathbb Z^{k+1}}C_{\bf q}\] for some integral binary nestohedron $\Nest({\bf y})$. Our goal in this section is to use this reinterpretation of fertility numbers to gain new information about them. 

The basic properties of fertility numbers that were proven in \cite{DefantFertility} are listed in Section~\ref{Sec:Intro}. In particular, it was shown that the set of fertility numbers is a multiplicative monoid with lower asymptotic density at least $0.7618$ that contains all nonnegative integers that are not congruent to $3$ modulo $4$. In \cite{DefantFertility}, the author asked if the set of fertility numbers has a density and, if so, what this density is. We will answer this question by showing that the set of fertility numbers has density $1$. In \cite{DefantFertility}, it was also proven that $3,7,11,15,19,23$ are all infertility numbers; we will use Theorem~\ref{Thm:Main1} to extend this list by determining all infertility numbers that are at most $126$. 

\begin{lemma}\label{LemFertilitopes2}
For each nonnegative integer $f$, the following are equivalent: 
\begin{enumerate}
\item There exist an integer $n\geq 3$ and a permutation $\pi\in S_n$ satisfying $\des(\pi)=\frac{n-3}{2}$ and $|s^{-1}(\pi)|=f$.
\item The integer $f$ is divisible by $4$ or is of the form $a(-2a+4b+3)$ for some integers $1\leq a\leq b$. 
\end{enumerate} 
\end{lemma}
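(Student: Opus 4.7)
The plan is to enumerate the very short list of integer compositions that can appear in $\mathcal V(\pi)$ when $\des(\pi) = (n-3)/2$, translate each via Theorems~\ref{Thm:Main1} and \ref{Thm:Main2} into the corresponding integral binary nestohedron, and read off $f$ via the Fertility Formula.

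\emph{Setup.} Set $k = (n-3)/2$, so every composition in $\mathcal V(\pi)$ has $k+1$ positive parts summing to $k+3$. With only two units of excess over $(1,\ldots,1)$, each such composition is either of \emph{type A} (one coordinate equal to $3$, the rest $1$, contributing $C_3 = 5$ to the Fertility Formula) or of \emph{type B} (two coordinates equal to $2$, the rest $1$, contributing $C_2^2 = 4$). Writing $\alpha,\beta$ for the number of each, Corollary~\ref{FF} gives $f = 5\alpha + 4\beta$.

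\emph{Proof of $(1)\Rightarrow(2)$.} If $\mathcal V(\pi) = \emptyset$ then $f = 0$ is divisible by $4$. Otherwise, by Theorems~\ref{Thm:Main1} and \ref{Thm:Main2}, the fertilitope $\Fer_\pi \subseteq \mathbb R^{k+1}$ is an integral binary nestohedron $\sum_{I \in \mathcal B} y_I \Delta_I$ with $\sum_I y_I = k+3$. Since $\mathcal B$ must contain all $k+1$ singletons with weight at least $1$, the total weight $t := \sum_{|I| \geq 2} y_I$ on non-singletons lies in $\{0,1,2\}$, and I would split into cases accordingly. When $t = 0$, $\mathcal V(\pi)$ is a single composition and $f \in \{4, 5\}$. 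When $t = 1$ with non-singleton $J$ of size $m \geq 2$ and doubled singleton $\{i\}$, Theorem~\ref{Thm:PostLattice} gives $\mathcal V(\pi) = \{\text{base} + e_j : j \in J\}$, producing $f = 4m + 1$ or $f = 4m$ depending on whether $i \in J$. When $t = 2$ is distributed as $y_J = 2$ for a single non-singleton $J$ of size $m$, one finds $\alpha = m$, $\beta = \binom{m}{2}$, and $f = m(2m+3)$. When $t = 2$ splits as $y_{J_1} = y_{J_2} = 1$ for distinct non-singletons, the binary condition forces these intervals to be either nested or disjoint; the disjoint case gives $f = 4|J_1||J_2|$, while the nested case $J_1 \subsetneq J_2$ with $|J_1| = a$, $|J_2| = b$ gives $\alpha = a$ and $\beta = \binom{a}{2} + a(b-a) = a(2b-a-1)/2$, whence $f = a(4b - 2a + 3)$. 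Every outcome is either a multiple of $4$ or has the form $a(-2a+4b+3)$ with $1 \leq a \leq b$.

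\emph{Proof of $(2)\Rightarrow(1)$.} Conversely, for any $f$ of the stated form, I would construct an integral binary nestohedron realizing it and invoke Theorem~\ref{Thm:Main2} to obtain a corresponding $\pi$ (whose descent count is automatically $k$ because its valid compositions have $k+1$ parts). Every positive multiple of $4$ is obtained from the $t = 0$ two-twos configuration (for $f = 4$) or from the $t = 1$ configuration with $i \notin J$ of size $m = f/4$ embedded in a suitably large ambient $[k+1]$ (for $f = 4m \geq 8$); the value $f = 0$ is realized directly by any unsorted permutation of the required descent count, such as $\pi = 14523 \in S_5$. For $f = a(-2a+4b+3)$ with $1 \leq a < b$, use nested intervals $[a] \subsetneq [b]$ each with weight $1$; for $a = b \geq 2$, use a single non-singleton interval of size $a$ with weight $2$; and for the exceptional case $a = b = 1$ (yielding $f = 5$), use the $t = 0$ one-three configuration.

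\emph{Main obstacle.} The only non-routine step is the lattice-point count in the $t = 2$ nested subcase. One must count distinct compositions $\text{base} + e_{i_1} + e_{i_2}$ with $i_1 \in J_1, i_2 \in J_2$, splitting by whether $i_1 = i_2$, whether both lie in $J_1$, or whether one lies in $J_2 \setminus J_1$; the resulting tally $\beta = a(2b-a-1)/2$ is what produces the otherwise mysterious expression $a(-2a+4b+3)$ in the lemma statement.
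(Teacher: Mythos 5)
Your proposal is correct and follows essentially the same route as the paper: both use Theorems~\ref{Thm:Main1} and \ref{Thm:Main2} together with Theorem~\ref{Thm:PostLattice} to identify $\mathcal V(\pi)$ with the lattice points of an integral binary nestohedron whose weights sum to $k+3$, then exploit the two units of excess weight to enumerate the possible configurations and compute $f$ via the Fertility Formula, with the same explicit constructions for the converse. The only cosmetic difference is that the paper packages the excess as $\Delta_J+\Delta_{J'}$ and splits just on whether $J\cap J'=\emptyset$ or $J\subseteq J'$, which absorbs your five subcases into two.
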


\begin{proof}
Both statements holds when $f=0$, so we may assume that $f\geq 1$. 

Suppose there exist $n\geq 3$ and $\pi\in S_n$ satisfying $\des(\pi)=k=\frac{n-3}{2}$ and $|s^{-1}(\pi)|=f$. Since $f\geq 1$, the fertilitope $\Fer_\pi$ is nonempty. By Corollary~\ref{Cor1} and Theorem~\ref{Thm:Main2}, there is a binary building set $\mathcal B$ and a tuple ${\bf y}=(y_I)_{I\in\mathcal B}\in\mathbb Z_{>0}^{\mathcal B}$ such that $\Fer_\pi=\Nest({\bf y})$.
Every valid composition of $\pi$ is a composition of $n-k$ into $k+1$ parts, so $\sum_{I\in\mathcal B}y_I=n-k$. Furthermore, $y_{\{i\}}\geq 1$ for every $i\in[k+1]$. Because $n-k=k+3$, we conclude that there are (not necessarily distinct) sets $J,J'\in\mathcal B$ such that $\Fer_\pi=\sum_{i=1}^{k+1}\Delta_{\{i\}}+\Delta_J+\Delta_{J'}$. It now follows from Theorems~\ref{Thm:PostLattice} and \ref{Thm:Main1} that 
\begin{equation}\label{EqFertilitopes2}
\mathcal V(\pi)=\Fer_\pi\cap\,\mathbb Z^{k+1}=\{(1,1,\ldots,1)+e_j+e_{j'}:j\in J,\, j'\in J'\}.
\end{equation} 

If $J\cap J'=\emptyset$, then each composition in $\mathcal V(\pi)$ contains two parts equal to $2$ and $k-1$ parts equal to $1$. In this case, we have $C_{\bf q}=4$ for every ${\bf q}\in\mathcal V(\pi)$, so the Fertility Formula (Corollary~\ref{FF}) tells us that $f=|s^{-1}(\pi)|=4|\mathcal V(\pi)|$ is divisible by $4$.

Now suppose $J\cap J'\neq\emptyset$. Because $\mathcal B$ is a binary building set, we either have $J\subseteq J'$ or $J'\subseteq J$; without loss of generality, assume $J\subseteq J'$. Let $a=|J|$ and $b=|J'|$, and note that $1\leq a\leq b$. It follows from \eqref{EqFertilitopes2} that every composition in $\mathcal V(\pi)$ either has one part equal to $3$ and $k$ parts equal to $1$ or two parts equal to $2$ and $k-1$ parts equal to $1$. The number of compositions in $\mathcal V(\pi)$ of the first kind is $a$. The compositions in $\mathcal V(\pi)$ of the second kind are those of the form $(1,1,\ldots,1)+e_j+e_{j'}$ such that $j\neq j'$ and either $j,j'\in J$ or $j\in J$ and $j'\in J'\setminus J$. The number of such compositions is $\binom{a}{2}+a(b-a)$. If ${\bf q}$ is a composition of the first kind, then $C_{\bf q}=5$; if ${\bf q}$ is of the second kind, then $C_{\bf q}=4$. Therefore, it follows from the Fertility Formula (Corollary~\ref{FF}) that $f=|s^{-1}(\pi)|=5a+4\left(\binom{a}{2}+a(b-a)\right)=a(-2a+4b+3)$. This proves one direction of the lemma. 

To prove the converse, consider a positive integer $k$ and sets $J,J'\subseteq[k+1]$ (to be specified later) such that the collection $\mathcal B=\{\{1\},\ldots,\{k+1\},J,J'\}$ is a binary building set. Form the list of sets $\{1\},\{2\},\ldots,\{k+1\},J,J'$, and let $y_I$ denote the number of times that $I$ appears in the list. Let ${\bf y}=(y_I)_{I\in\mathcal B}$. It follows from Theorem~\ref{Thm:Main2} that there is a permutation $\pi$ with $k$ descents such that $\Fer_\pi=\Nest({\bf y})$. The size of $\pi$ is $k+\sum_{I\in\mathcal B}y_I=2k+3$. We have $\mathcal V(\pi)=\{(1,1,\ldots,1)+e_j+e_{j'}:j\in J,\,j'\in J'\}$.

If $f$ is divisible by $4$, then let $k=f/4$, $J=[k]$, and $J'=\{k+1\}$. 
In this case, $|\mathcal V(\pi)|=k$, and $C_{\bf q}=4$ for all ${\bf q}\in\mathcal V(\pi)$. By the Fertility Formula, $|s^{-1}(\pi)|=4k=f$. 

Finally, suppose $f=a(-2a+4b+3)$ for some positive integers $a\leq b$. Let $k=b-1$, $J=[a]$, and $J'=[b]$. Then $\mathcal V(\pi)$ contains $a$ compositions of the form $(1,1,\ldots,1)+2e_j$ for $j\in J$. It also contains $\binom{a}{2}+a(b-a)$ compositions of the form $(1,1,\ldots,1)+e_j+e_{j'}$ for $j\in J$, $j'\in J'$, and $j\neq j'$. The compositions ${\bf q}$ of the first kind satisfy $C_{\bf q}=5$, while the compositions ${\bf q}$ of the second kind satisfy $C_{\bf q}=4$. By the Fertility Formula, $|s^{-1}(\pi)|=5a+4\left(\binom{a}{2}+a(b-a)\right)=a(-2a+4b+3)=f$. 
\end{proof}

\begin{theorem}\label{Thm:Fertilitopes2}
The set of fertility numbers has density $1$ in the set of nonnegative integers. 
\end{theorem}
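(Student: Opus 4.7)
The plan is to show that the set of infertility numbers has density $0$. From the bullet list of results recalled from \cite{DefantFertility}, every nonnegative integer not congruent to $3$ modulo $4$ is already a fertility number, so it suffices to show that the infertility numbers within $\{n : n \equiv 3 \pmod{4}\}$ form a density-$0$ subset of $\mathbb{N}$.

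The key input is Lemma~\ref{LemFertilitopes2}, which implies that for every integer $a \equiv 3 \pmod{4}$ with $a \geq 3$ and every integer $m \equiv 1 \pmod{4}$ with $m \geq 2a + 3$, the product $am$ is a fertility number. For $n \equiv 3 \pmod{4}$, any divisor $a$ of $n$ with $a \equiv 3 \pmod{4}$ automatically gives $n/a \equiv 1 \pmod{4}$; hence any such $n$ that admits a divisor $a \equiv 3 \pmod{4}$ with $3 \leq a \leq \alpha(n) := \tfrac{-3 + \sqrt{9 + 8n}}{4}$ is a fertility number. Moreover, the smallest divisor $\geq 3$ of $n$ that is $\equiv 3 \pmod{4}$ equals the smallest prime factor $p_3(n)$ of $n$ with $p_3(n) \equiv 3 \pmod{4}$, since any divisor $\equiv 3 \pmod{4}$ of $n$ must contain some prime factor $\equiv 3 \pmod{4}$ and is therefore $\geq p_3(n)$.

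Thus the infertility numbers $\leq N$ lie in the exceptional set
\[
E(N) := \{n \leq N : n \equiv 3 \pmod{4},\ p_3(n) > \alpha(n)\}.
\]
For $n \in E(N)$, write $n = p m$ with $p = p_3(n)$. The inequality $p > \alpha(n)$ is equivalent to $n < 2p^2 + 3p$, which gives $m < 2p + 3$ and also $m \leq \sqrt{2n} \leq \sqrt{2N}$. Suppose $m$ has a prime factor $q \equiv 3 \pmod{4}$. By minimality of $p$ we have $q \geq p$, and writing $m = qk$ the bound $qk = m < 2p + 3 \leq 2q + 3$ forces $k \in \{1, 2\}$. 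If $k = 1$ then $n = pq \equiv 9 \equiv 1 \pmod{4}$, contradicting $n \equiv 3 \pmod{4}$; if $k = 2$ then $n = 2pq$ is even, again a contradiction. Hence $m$ has no prime factor $\equiv 3 \pmod{4}$ and (since $n$ is odd) no factor of $2$; denote by $L(X)$ the set of $m \leq X$ every prime factor of which is $\equiv 1 \pmod{4}$.

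To conclude, I will invoke two classical analytic estimates. The Landau--Ramanujan-type bound gives $|L(X)| = O(X/\sqrt{\log X})$, and partial summation then yields $\sum_{m \in L(X)} 1/m = O(\sqrt{\log X})$. The prime number theorem in arithmetic progressions gives $\pi_{3,4}(X) := |\{p \leq X : p \text{ prime},\ p \equiv 3 \pmod{4}\}| = O(X/\log X)$. Decomposing each $n \in E(N)$ as $n = pm$ with $m \in L(\sqrt{2N})$ and summing over the admissible primes $p$,
\[
|E(N)| \ \leq\ \sum_{m \in L(\sqrt{2N})} \pi_{3,4}(N/m) \ \ll\ \frac{N}{\log N} \sum_{m \in L(\sqrt{2N})} \frac{1}{m} \ \ll\ \frac{N}{\sqrt{\log N}} \ =\ o(N),
\]
where the penultimate step uses that $\log(N/m) \gg \log N$ uniformly for $m \leq \sqrt{2N}$. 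This establishes the theorem. The main obstacle is the combinatorial case analysis forcing every prime factor of $m$ to be $\equiv 1 \pmod{4}$; once that reduction is in place, the analytic count is routine.
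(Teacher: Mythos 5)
Your proposal is correct, and it rests on exactly the same key input as the paper: Lemma~\ref{LemFertilitopes2} implies that an integer $n\equiv 3\pmod 4$ is a fertility number as soon as it has a prime factor $p\equiv 3\pmod 4$ with $2p^2+3p\leq n$, so everything reduces to showing that the exceptional $n$ (those whose least prime factor $\equiv 3\pmod 4$ exceeds roughly $\sqrt{n/2}$) have density $0$. Where you diverge is in how this sieve statement is finished. The paper argues softly: it covers the class $3\pmod 4$ by the arithmetic progressions $A_p$ of density $1/(4p)$ and observes via the Chinese Remainder Theorem that the uncovered set has density $\frac14\prod_{p\equiv 3\,(4)}(1-1/p)=0$, with no rate. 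You instead extract the extra structural fact that for an exceptional $n=pm$ the cofactor $m$ is at most about $\sqrt{2n}$ and has all prime factors $\equiv 1\pmod 4$ (this observation is essential --- without restricting $m$ to such integers, the sum $\sum_m \pi_{3,4}(N/m)$ would be of order $N$), and then you invoke the Landau--Ramanujan bound together with the prime number theorem in arithmetic progressions to get the quantitative estimate $|E(N)|=O(N/\sqrt{\log N})$. Your route is analytically heavier but buys an explicit (and essentially sharp, given Landau--Ramanujan) rate of decay for the density of infertility numbers, which the paper's argument does not provide. Two cosmetic points: the bound $m\leq\sqrt{2n}$ should really be $m<\tfrac{3+\sqrt{9+8n}}{2}$, which exceeds $\sqrt{2n}$ by an additive constant --- harmless for the asymptotics but worth stating correctly; and the claim that a divisor $\equiv 3\pmod 4$ must have a prime factor $\equiv 3\pmod 4$ tacitly uses that $n$ is odd, which you should flag.
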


\begin{proof}
For each prime number $p\equiv 3\pmod 4$, let $A_p=\{p(-2p+4b+3):b\in\mathbb Z, b\geq p\}$. Every number in $A_p$ is congruent to $3$ modulo $4$. The density of $A_p$ is $1/(4p)$. Let $A=\bigcup_{p\equiv 3\pmod 4}A_p$, where the union is over all primes $p$ that are congruent to $3$ modulo $4$. Let $B$ be the set of positive integers that are congruent to $3$ modulo $4$ and are not in $A$. By the Chinese Remainder Theorem, the density of $B$ is \[\frac{1}{4}\prod_{p\equiv 3\!\!\!\!\!\pmod 4}\left(1-\frac{1}{p}\right);\] basic analytic number theory tells us that the value of this infinite product is $0$. It follows that $A$ has density $1/4$. According to Lemma~\ref{LemFertilitopes2}, every element of $A$ is a fertility number. In \cite{DefantFertility}, the author proved that every nonnegative integer that is not congruent to $3$ modulo $4$ is a fertility number. This completes the proof.
\end{proof}

\begin{theorem}\label{Thm:Fertilitopes1}
A nonnegative integer $f\leq 126$ is an infertility number if and only if $f\equiv 3\pmod 4$, $f\not\in\{95,119\}$, and either $f<27$ or $f\not\equiv 3\pmod{12}$. 
\end{theorem}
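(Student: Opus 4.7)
The plan is to apply Theorem~\ref{Thm:Main2} and the Fertility Formula (Corollary~\ref{FF}) to recast the problem: a positive integer $f$ is a fertility number if and only if there is an integral binary nestohedron $P\subseteq\mathbb R^{k+1}$ with $\sum_{{\bf q}\in P\cap\,\mathbb Z^{k+1}}C_{\bf q}=f$. Because \cite{DefantFertility} already shows that every $f\not\equiv 3\pmod 4$ is a fertility number and that $3,7,11,15,19,23$ are infertility numbers, I only need to sort the integers $f\equiv 3\pmod 4$ with $27\leq f\leq 126$ into the two classes. The bound from \cite{DefantFertility} that a fertility $f$ is realized by a permutation of size at most $f+1$ translates (via $n-k=\sum_{I\in\mathcal B}y_I$) into a finite search over integer binary building sets whose coefficient sum is at most $f+1-k$, and the bound $C_6=132>126$ forces every coordinate of every lattice point of the candidate nestohedron to satisfy $q_i\leq 5$.

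For the fertility half I will exhibit one explicit integral binary nestohedron per value. Applying Lemma~\ref{LemFertilitopes2} with $a=3$ and $b=(f+9)/12$ gives $a(-2a+4b+3)=12b-9=f$ for every $f\equiv 3\pmod{12}$ in $[27,126]$, settling $27,39,51,63,75,87,99,111,123$. The same lemma with $a=b=7$ produces $f=7\cdot 17=119$. For the remaining value $f=95$ I will take
\[P=3\Delta_{\{1\}}+\Delta_{\{2\}}+\Delta_{\{3\}}+2\Delta_{\{1,2\}}\subseteq\mathbb R^3.\]
The building set $\{\{1\},\{2\},\{3\},\{1,2\}\}$ is binary, and Theorem~\ref{Thm:PostLattice} shows that $P\cap\,\mathbb Z^3=\{(5,1,1),(4,2,1),(3,3,1)\}$, whose Catalan products sum to $42+28+25=95$. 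Theorem~\ref{Thm:Main2} then converts $P$ into a fertilitope $\Fer_\pi$, and Corollary~\ref{FF} yields $|s^{-1}(\pi)|=95$.

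For the infertility half I must rule out, for each $f$ in the target list $\{31,35,43,47,55,59,67,71,79,83,91,103,107,115\}$, the existence of an integral binary nestohedron whose Catalan-product sum equals $f$. Three reductions control the search. First, using the recursive generation of $\mathsf{IBN}$ described in the remark after Corollary~\ref{Cor1}, every candidate nestohedron is either a Cartesian product $Q\times Q'$ of two nontrivial integral binary nestohedra (in which case the fertility factors as $f_Qf_{Q'}$ with $f_Q,f_{Q'}>1$) or else ``irreducible'' with $\mathcal B_{\max}=\{[k+1]\}$. Every $f$ in the target list is either prime or factors only as $5\cdot 7$, $5\cdot 11$, $7\cdot 13$, or $5\cdot 23$; inducting on $f$, in each case at least one factor is a known infertility number, so the nontrivial product case is excluded and the candidate $P$ must be irreducible. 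Second, the bounds $q_i\leq 5$ and $\sum y_I\leq f+1-k$ (together with $|\mathcal V(\pi)|\leq f$) limit both the dimension and the total mass of $P$. Third, among irreducible binary building sets on $[k+1]$, only those containing $[k+1]$ itself are allowed (since $\mathcal B_{\max}=\{[k+1]\}$), and these are parametrized by full binary plane trees on $k+1$ leaves — a tractable combinatorial family. I will complete the proof by enumerating these cases (by hand for small $k$, then by straightforward computer verification for the remaining ones) and verifying that no Catalan-product sum equals any of the target values.

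The main obstacle is precisely this final enumeration. Although Theorem~\ref{Thm:Main2} shrinks the problem enormously relative to direct permutation enumeration (many permutations share the same fertilitope, and the binary building set data is much more compact than a permutation), the number of $(\mathcal B,{\bf y})$ pairs surviving the bounds above is still sizeable, and the key subtlety is to organize the search via the $\mathsf{IBN}$ recursion so that no case is missed. Once this bookkeeping is handled, the calculations themselves are mechanical applications of Theorem~\ref{Thm:PostLattice} to evaluate each candidate sum.
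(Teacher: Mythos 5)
Your fertility half is fine and matches the paper: Lemma~\ref{LemFertilitopes2} with $a=3$ handles every $f\equiv 3\pmod{12}$ with $f\geq 27$, with $a=b=7$ it gives $119$, and your nestohedron $3\Delta_{\{1\}}+\Delta_{\{2\}}+\Delta_{\{3\}}+2\Delta_{\{1,2\}}$ correctly realizes $95$ (the paper uses the equivalent permutation $1243567$). The problem is the infertility half, which is where all the work lies and which you do not actually prove. Deferring the decisive step to an unexecuted ``straightforward computer verification'' leaves the theorem unestablished, and the reductions you state do not make the search as small as you suggest. In particular, the claim that $|\mathcal V(\pi)|\leq f$ ``limits the dimension'' is false: a nestohedron can have a single lattice point $(1,1,\ldots,1)$ in arbitrarily large dimension, and the only bound your hypotheses give on $k$ is $k+1\leq\sum_I y_I\leq f+1-k$, i.e.\ $k\leq f/2\approx 57$; the full binary plane trees on up to $58$ leaves are counted by a Catalan number of order $10^{31}$, which is not ``a tractable combinatorial family'' to enumerate even by machine without further pruning. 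What actually controls the search is a bound on the excess $e=n-2k-1$ (equivalently $\sum_I y_I-(k+1)$), which you never derive; one gets $e\leq 6$ from $C_{\bf q}\geq 2^e$ for every lattice point ${\bf q}$, but even then the remaining enumeration over laminar families of intervals is substantial.

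The idea you are missing, which is the engine of the paper's proof, is a parity argument on Catalan numbers: the target values satisfy $f\leq 115$ and $f$ odd, and the only odd Catalan numbers $C_r\leq 115$ with $r\geq 1$ are $C_1=1$ and $C_3=5$, so $\mathcal V(\pi)$ must contain a composition whose parts all lie in $\{1,3\}$. This forces $n-k\in\{k+1,k+3,k+5\}$ (the inequality $5^{(n-2k-1)/2}\leq 115$ kills larger excess), reducing everything to three cases with no computer search. The case $n-k=k+1$ gives $f=1$; the case $n-k=k+3$ is exactly Lemma~\ref{LemFertilitopes2} and forces $a=3$, hence $f\equiv 3\pmod{12}$ with $f\geq 27$; and the case $n-k=k+5$ is handled by a short argument exploiting the M-convex exchange property of $\mathcal V(\pi)$ (if certain compositions lie in the discrete polymatroid, the exchange axiom forces additional lattice points whose Catalan products push the total past $115$ or past $f$). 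One smaller remark: your product-exclusion step is sound as stated, but be careful with its logic --- the point is not that a product of a fertility number with an infertility number is infertile, but rather that a reducible nestohedron forces $f$ to factor as a product of two fertility numbers greater than $1$, which your target values do not admit. I recommend restructuring the infertility argument around the odd-Catalan observation rather than enumeration.
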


\begin{proof}
As we have already remarked, it was proven in \cite{DefantFertility} that every infertility number is congruent to $3$ modulo $4$. The valid compositions of $1243567$ are $(3,3)$, $(4,2)$, and $(5,1)$, so $C_{(3,3)}+C_{(4,2)}+C_{(5,1)}=95$ is a fertility number. By setting $a=b=7$ in Lemma~\ref{LemFertilitopes2}, we find that $119$ is a fertility number. By setting $a=3$ in Lemma~\ref{LemFertilitopes2}, we find that every integer $m\geq 27$ such that $m\equiv 3\pmod{12}$ is a fertility number. This proves one direction of the theorem. 

To prove the other direction, suppose $f\equiv 3\pmod 4$, $f\not\in\{95,119\}$, and either $f<27$ or $f\not\equiv 3\pmod{12}$; combined with the hypothesis $f\leq 126$, these conditions imply that $f\leq 115$. We will demonstrate that $f$ is an infertility number. Suppose, by way of contradiction, that there is a permutation $\pi$ of size $n$ such that $|s^{-1}(\pi)|=f$. Since permutations with the same relative order have the same fertility, we may assume $\pi\in S_n$. Let $k=\des(\pi)$. The Fertility Formula (Corollary~\ref{FF}) tells us that $f=\sum_{{\bf q}\in\mathcal V(\pi)}C_{\bf q}$, and we know that every element of $\mathcal V(\pi)$ is a composition of $n-k$ into $k+1$ parts. Hence, $n-k\geq k+1$. The only odd Catalan numbers $C_r$ with $r\geq 1$ and $C_r\leq 115$ are $C_1=1$ and $C_3=5$. Therefore, we deduce from the fact that $f$ is odd that there exists a composition ${\bf q^{\text{odd}}}\in\mathcal V(\pi)$ whose parts are all equal to either $1$ or $3$. Because ${\bf q}^{\text{odd}}$ is a composition of $n-k$ into $k+1$ parts, we must have $n-k\equiv k+1\pmod 2$. Also, ${\bf q}^{\text{odd}}$ has $(n-2k-1)/2$ parts equal to $3$, so $115\geq f\geq C_{{\bf q}^{\text{odd}}}=C_3^{(n-2k-1)/2}=5^{(n-2k-1)/2}$. Together, these observations tell us that $n-k\in\{k+1,k+3,k+5\}$. 

If $n-k=k+1$, then we must have $\mathcal V(\pi)=\{(1,1,\ldots,1)\}$. However, this implies that $f=C_{(1,1,\ldots,1)}=1$, which contradicts the assumption that $f\equiv 3\pmod 4$. 

If $n-k=k+3$, then it follows from Lemma~\ref{LemFertilitopes2} that $f=a(-2a+4b+3)$ for some positive integers $a\leq b$. In this case, the hypothesis $f\equiv 3\pmod 4$ forces $a\equiv 3\pmod 4$, and the fact that $f\leq 115$ forces $a<7$. However, this is impossible because it implies that $a=3$ and, therefore, $27\leq f=3(4b-3)\equiv 3\pmod{12}$.  

Finally, suppose $n-k=k+5$. We have seen, as a consequence of Theorems~\ref{Thm:Main1} and \ref{Thm:Main2}, that $\mathcal V(\pi)$ is a discrete polymatroid contained in the set $\Comp_{k+1}(k+5)\coloneqq\{(x_0,\ldots,x_k)\in\mathbb Z_{>0}^{k+1}:x_0+\cdots+x_k=k+5\}$. Therefore, in order to obtain our desired contradiction in this case, it suffices to prove that $\sum_{{\bf q}\in V}C_{{\bf q}}\neq f$ for \emph{every} discrete polymatroid $V\subseteq \Comp_{k+1}(k+5)$. Suppose instead that there is a discrete polymatroid $V\subseteq \Comp_{k+1}(k+5)$ such that $\sum_{{\bf q}\in V}C_{{\bf q}}=f$. As above, we can use the fact that $f$ is odd to deduce that there is a vector in $V$ whose parts are all equal to $1$ or $3$. By permuting coordinates if necessary, we may assume this vector is $(3,3,1,1,\ldots,1)$. It will be helpful to keep in mind the easily-verified fact that $C_{\bf q}\geq 16$ for every ${\bf q}\in \Comp_{k+1}(k+5)$. We consider several cases. 

\medskip
\noindent {\bf Case 1.} Suppose $V$ contains either $(5,1,1,\ldots,1)$ or $(1,5,1,1,\ldots,1)$. Without loss of generality, we may assume $V$ contains $(5,1,1,\ldots,1)$. Because $V$ is a discrete polymatroid that contains $(3,3,1,1,\ldots,1)$ and $(5,1,1,\ldots,1)$, it must also contain $(4,2,1,1,\ldots,1)$. Since \[C_{(3,3,1,1,\ldots,1)}+C_{(5,1,1,\ldots,1)}+C_{(4,2,1,1,\ldots,1)}=95\neq f,\] there must be some other vector ${\bf q}'=(q_0',\ldots,q_k')\in V$. Using the fact that $f\leq 115$, we readily check that $|V|=4$, that $q_i'\leq 3$ for all $i\in\{0,\ldots,k\}$, and that there is an index $j\geq 2$ with $q_j'\geq 2$. Because $V$ is a discrete polymatroid that contains both ${\bf q}'$ and $(5,1,1,\ldots,1)$, it must contain a vector $(q_0'',\ldots,q_k'')$ with $q_0''=4$ and $q_j''=2$. This forces $|V|\geq 5$, which is a contradiction. 

\medskip
\noindent {\bf Case 2.} Suppose $V$ contains a vector ${\bf q}'=(q_0',\ldots,q_k')$ with $q_j'=5$ for some $j\geq 2$. For $\ell\in\{0,1\}$, let ${\bf q}^{(\ell)}=(q_0^{(\ell)},\ldots,q_k^{(\ell)})$ be the vector satisfying $q_\ell^{(\ell)}=2$, $q_j^{(\ell)}=4$, and $q_i^{(\ell)}=1$ for all $i\in\{0,\ldots,k\}\setminus\{\ell,j\}$. Because $V$ is a discrete polymatroid that contains both $(3,3,1,1,\ldots,1)$ and ${\bf q}'$, it must contain both ${\bf q}^{(0)}$ and ${\bf q}^{(1)}$. This is a contradiction because it implies that \[f=\sum_{{\bf q}\in V}C_{\bf q}\geq C_{(3,3,1,1,\ldots,1)}+C_{{\bf q}'}+C_{{\bf q}^{(0)}}+C_{{\bf q}^{(1)}}=25+42+28+28=123.\]

\medskip
\noindent {\bf Case 3.} Suppose that none of the vectors in $V$ has a coordinate equal to $5$. The \dfn{type} of a composition is the integer partition obtained by rearranging its parts into nonincreasing order. The possible types of the compositions in $V$ are $(4,2,1,1,\ldots,1)$, $(3,2,2,1,1,\ldots,1)$, $(2,2,2,2,1,1,\ldots,1)$, and $(3,3,1,1,\ldots,1)$. If ${\bf q}$ is a composition of one of the first three types, then $C_{\bf q}\equiv 0\pmod 4$. Since $f\equiv 3\pmod 4$ and $f\leq 115$, there must be exactly three compositions of type $(3,3,1,1,\ldots,1)$ in $V$. Let ${\bf q}^{(1)}$, ${\bf q}^{(2)}$, ${\bf q}^{(3)}$ be these compositions. For $1\leq i<j\leq 3$, let ${\bf q}^{(i,j)}=({\bf q}^{(i)}+{\bf q}^{(j)})/2$; note that ${\bf q}^{(i,j)}\in \Comp_{k+1}(k+5)$. Being a discrete polymatroid, $V$ is the set of lattice points in a polytope (in fact, a generalized permutohedron), so the vectors ${\bf q}^{(1,2)}$, ${\bf q}^{(1,3)}$, ${\bf q}^{(2,3)}$ must all belong to $V$. This is our desired contradiction because it implies that \[f=\sum_{{\bf q}\in V}C_{\bf q}\geq \sum_{i=1}^3C_{{\bf q}^{(i)}}+\sum_{1\leq i<j\leq 3}C_{{\bf q}^{(i,j)}}=\sum_{i=1}^325+\sum_{1\leq i<j\leq 3}C_{{\bf q}^{(i,j)}}\geq 75+\sum_{1\leq i<j\leq 3}16=123.\qedhere\]
\end{proof}

\begin{remark}
The methods used in the proof of Theorem~\ref{Thm:Fertilitopes1} could be extended to determine even more infertility numbers, but this would require much further tedious casework. 
\end{remark}

In \cite{DefantFertility}, the author conjectured that there are infinitely many infertility numbers. However, in light of Theorem~\ref{Thm:Fertilitopes2}, he is now more doubtful of this conjecture. Hence, we have rephrased this conjecture as a question. 
 
\begin{question}
Are the infinitely many infertility numbers?
\end{question}

\section{Further Questions}\label{Sec:Descent}

\subsection{Real-Rooted Polynomials from Polytopes}
We say a polynomial $a_0+a_1x+\cdots+a_mx^m\in\mathbb R_{\geq 0}[x]$ is \dfn{real-rooted} if all of its roots are real, and we say it is \dfn{log-concave} if $a_{j-1}a_{j+1}\leq a_j^2$ for all $j\in[m-1]$. It is well known that every real-rooted polynomial in $\mathbb R_{\geq 0}[x]$ is log-concave. Recall Conjectures~\ref{Conj2} and \ref{Conj:Fertilitopes1} from Section~\ref{Sec:Intro}. Despite the fact that these conjectures appear, at first glance, to be very far removed from each other, we saw in Corollary~\ref{Cor:Fertilitopes1} that they are equivalent.

One of the advantages of Conjecture~\ref{Conj:Fertilitopes1} over Conjecture~\ref{Conj2} is that it appears to fit naturally into a broader context; in other words, it is likely that the ``correct'' form of this conjecture (assuming the conjecture itself is correct) is more general. For one thing, the polynomials in Conjecture~\ref{Conj:Fertilitopes1} are given by sums over the lattice points in very specific nestohedra, and nestohedra are special cases of generalized permutohedra. 

\begin{question}\label{Quest:Fertilitopes1}
Is it true that $\displaystyle\sum_{{\bf q}\in P\cap\,\mathbb Z^{k+1}}N_{{\bf q}}(x)$ is real-rooted whenever $P\subseteq \mathbb R_{>0}^{k+1}$ is an integral nestohedron? Is this true whenever $P\subseteq \mathbb R_{>0}^{k+1}$ is an integral generalized permutohedron?
\end{question}

Note that the second part of Question~\ref{Quest:Fertilitopes1} could be equivalently rephrased in terms of discrete polymatroids since discrete polymatroids are essentially the same as sets of lattice points of generalized permutohedra. We can also consider lattice points in more general polytopes. It is natural to require these polytopes to be integral and to lie in the set $\{(x_0,\ldots,x_k)\in\mathbb R_{>0}^{k+1}:x_0+\cdots+x_k=\ell\}$ for some $\ell$.  

\begin{question}
For nonnegative integers $k$ and $\ell$, what can be said about the set of integral polytopes $P\subseteq\{(x_0,\ldots,x_k)\in\mathbb R_{>0}^{k+1}:x_0+\cdots+x_k=\ell\}$ such that $\displaystyle\sum_{{\bf q}\in P\cap\,\mathbb Z^{k+1}}N_{{\bf q}}(x)$ is real-rooted? Can we exhibit an integral polytope $P\subseteq\{(x_0,\ldots,x_k)\in\mathbb R_{>0}^{k+1}:x_0+\cdots+x_k=\ell\}$ such that this polynomial is not real-rooted? 
\end{question}

In another direction, we could ask if Conjecture~\ref{Conj:Fertilitopes1} holds when the sequence of Narayana polynomials is replaced by more general sequences of real-rooted polynomials. What happens if we replace Narayana polynomials with, say, Eulerian polynomials? 

Because Conjecture~\ref{Conj:Fertilitopes1} involves polynomials that are defined as sums over discrete polymatroids (which are M-convex sets), it appears to be related, at least superficially, to the theory of Lorentzian polynomials introduced recently by Br\"and\'en and Huh \cite{BrandenHuh}; it would be very interesting if this connection was more than superficial. 

In all of these questions, one could just as well replace ``real-rooted'' with ``log-concave'' and still obtain valid questions worth pondering.  

\subsection{Extremal Hook Configurations}
There are several interesting enumerative properties of valid hook configurations. These objects were counted in \cite{DefantEngenMiller, DefantTroupes} using free probability theory. The articles \cite{Ilani, DefantCatalan, DefantMotzkin, Hanna, Maya} analyze valid hook configurations whose underlying permutations avoid certain patterns. The articles \cite{DefantCatalan, DefantEngenMiller, DefantTroupes, Hanna, Singhal} study \emph{uniquely sorted permutations}, which are essentially valid hook configurations with the maximum possible number of hooks. Furthermore, Sankar introduced the notion of a \emph{reduced} valid hook configuration, which was subsequently studied further by Axelrod-Freed \cite{Ilani}. 

According to Remark~\ref{Rem6}, the map $\mathcal H\mapsto{\bf q}^{\mathcal H}$ is a bijection between the set $\VHC(\pi)$ of valid hook configurations of a permutation $\pi$ and the set $\mathcal V(\pi)$ of valid compositions of $\pi$. Let us say a valid hook configuration $\mathcal H\in\VHC(\pi)$ is an \dfn{extremal hook configuration} if ${\bf q}^{\mathcal H}$ is a vertex of $\Fer_\pi$. 

\begin{question}
What can be said about extremal hook configurations? 
\end{question} 

Note that the discussion of $f$-vectors of binary nestohedra in Section~\ref{Subsec:BinaryNestohedra} allows us to compute the number of extremal hook configurations of a sorted permutation $\pi$ from the associated tree $\widehat T^\pi$ and binary building set $\mathcal B^\pi$ (see Theorem~\ref{Thm:Fertilitopes3}). Indeed, this number is $f_{\Nest({\bf y}^\pi)}(0)=\prod_{v}\deg(v)$, where the product ranges over all internal vertices of the tree $(\widehat T^\pi)^{\text c}$ (as defined in Section~\ref{Subsec:BinaryNestohedra}). 

\section{Acknowledgments}
The author thanks Akiyoshi Tsuchiya for a helpful conversation about integral polytopes. He also thanks Vincent Pilaud for pointing out the facts about $f$-vectors and $h$-vectors discussed in Section~\ref{Subsec:BinaryNestohedra} and for giving several other very helpful comments about the manuscript. The author was supported by a Fannie and John Hertz Foundation Fellowship and an NSF Graduate Research Fellowship.

\end{document}